 \renewcommand{\div}{\mathop{\mathrm{div}}\nolimits}
\newtheorem*{thm*}{Theorem A}
\newtheorem{thm}{Theorem}[section]
\newtheorem{dfn}{Definition}[section]
\newtheorem*{note}{Notation}
\newtheorem{lemma}{Lemma}[section]
\newtheorem{prop}{Proposition}[section]
\newtheorem{cor}{Corollary}[section]
\newtheorem*{conj}{Conjecture}
\newtheorem*{conj*}{Conjecture A}
\newtheorem*{conj**}{Conjecture B}
 \numberwithin{equation}{section}
\begin{document}

\def\RR{{\mathbb{R}}}
\def\diver{\text{div}}
 \renewcommand{\div}{\mathop{\mathrm{div}}\nolimits}

\title[jump diffusion processes]{De Giorgi type results for equations with nonlocal lower-order terms}

\author{Mostafa Fazly}

\address{Department of Mathematics, The University of Texas at San Antonio, San Antonio, TX 78249, USA} \email{mostafa.fazly@utsa.edu}

\thanks{}

\maketitle

\begin{abstract}  
It is known that the De Giorgi's conjecture does not hold in two dimensions for semilinear elliptic equations with a nonzero drift,  in general, 
$$
\Delta u+ q\cdot \nabla u+f(u)=0 \ \ \text{in } \ \ \mathbb R^2, 
$$
when $q=(0,-c)$ for $c\neq 0$. This equation arises in the modeling of Bunsen burner flames.  Bunsen flames are usually made of two flames: a diffusion
flame and a premixed flame.  In this article, we prove De Giorgi type results, and stability conjecture, for the following local-nonlocal counterpart of the above equation (with a nonlocal premixed flame) in two dimensions, 
$$\Delta u + c L[u]  +  f(u)=0  \quad  \text{in} \ \  \RR^n, 
$$
when $L$ is a nonlocal operator, $f\in C^1(\mathbb R)$ and $c\in\mathbb R^+$. In addition, we provide a priori estimates for the above equation, when  $n\ge 1$,  with various jumping kernels. The operator $\Delta+cL$ is an infinitesimal generator of  jump-diffusion processes in the context of probability theory.  

\end{abstract}

\vspace{1cm}

\noindent
{\it \footnotesize 2010 Mathematics Subject Classification:} {\scriptsize  47G20, 35J60, 60J75, 35J20, 60J60.}\\
{\it \footnotesize Keywords: De Giorgi's conjecture, jump-diffusion processes, local and nonlocal operators, stable solutions,  a priori estimates}. {\scriptsize }

\tableofcontents

\section{Introduction} 


Bonnet and Hamel in \cite{bh} studied  the existence of solutions of a reaction-diffusion equation in the plane $\mathbb R^2$.  The model is the following semilinear equation with an advection term 
\begin{equation}\label{deltac}
\Delta u - c \frac{\partial u}{\partial x_2} +f(u)=0 \ \ \text{in } \ \ \mathbb R^2,
\end{equation}
where $c$ is the speed constant and $f$ is a $C^1(\mathbb R)$ function. This problem arises in the modeling of Bunsen burner flames.  Bunsen flames are usually made of two flames: a diffusion
flame and a premixed flame. The authors in \cite{bh} constructed a solution $u$ for $c>0 $ and for the ignition type nonlinearity $f$ such that 
\begin{equation} 
u(\lambda k)\to -1 \ \ \text{when} \ \lambda \to \infty \ \text{for all} \ k=(\cos\phi,\sin\phi) \ \text{with} \ -\frac{\pi}{2}-\theta<\phi<-\frac{\pi}{2}+\theta,  
\end{equation}
and 
\begin{equation} 
u(\lambda k)\to 1 \ \ \text{when} \ \lambda \to \infty \ \text{for all} \ k=(\cos\phi,\sin\phi) \ \text{with} \ -\frac{\pi}{2}+\theta<\phi<\frac{3\pi}{2}-\theta,  
\end{equation}
for an angle $\theta\in(0,\frac{\pi}{2})$. This solution does not have one-dimensional symmetry due to the fact that level sets of solutions are parallel lines. In addition, the above solution $u$ is monotone in the direction of $x_2$-axis that is 
\begin{equation}
\frac{\partial u}{\partial x_2} >0  \ \ \text{in } \ \ \mathbb R^2. 
\end{equation}
This implies that the celebrated De Giorgi's conjecture does not hold for (\ref{deltac}) when $c\neq0$.   In other words, the De Giorgi's conjecture does not hold for semilinear elliptic equations with an advection term in two dimensions, that is 
\begin{equation}\label{delq}
\Delta u+ q\cdot \nabla u+f(u)=0 \ \ \text{in } \ \ \mathbb R^2, 
\end{equation}  
where $q$ is a vector and $q=(0,-c)$. We refer interested readers to  \cite{bhm}  by Berestycki, Hamel and Monneau and to \cite{faz} by the author for De Giorgi type results, called $m$-Liouville theorems, in this context.  In 1978, Ennio De Giorgi proposed a conjecture that reads;
\begin{conj}\label{conj1} 
Suppose that $u$ is an entire solution of the Allen-Cahn equation 
\begin{equation}\label{allen}
\Delta u+u-u^3 =0	 \quad \text{in}\ \  \mathbb {R}^n,
\end{equation}
satisfying $|u({x})| \le 1$, $\frac{\partial u}{\partial x_n} ({x}) > 0$ for ${x} = ({x}',x_n) \in \mathbb{R}^n$.	
Then, at least in dimensions $N\le 8$ the level sets of $u$ must be hyperplanes, i.e. there exists $g \in C^2(\mathbb{R})$ such that $u({x}) = g(\tau\cdot {x} - c)$, for some fixed $\tau \in \mathbb{R}^{n}$ when $\tau_n>0$.
\end{conj}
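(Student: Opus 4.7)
The plan is to reduce the symmetry statement to a Liouville-type property for the linearization at $u$. First, since $\partial_{x_n} u > 0$ is a classical positive solution of the linearized Allen-Cahn equation
\begin{equation*}
\Delta \sigma + f'(u)\, \sigma = 0, \qquad f(u)=u-u^3,
\end{equation*}
the existence of this positive eigenfunction implies that $u$ is \emph{stable} in the sense that the associated Schr\"odinger operator $-\Delta - f'(u)$ is nonnegative on $C^1_c(\mathbb{R}^n)$. I would record this as Step 1, since everything downstream uses only the monotonicity hypothesis through stability.

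Next, I would write $\sigma := \partial_{x_n} u > 0$ and, for each $i=1,\dots,n-1$, introduce the ratio $v_i := \partial_{x_i} u / \sigma$. A direct computation, using that $\partial_{x_i}u$ and $\sigma$ both solve the linearized equation, yields the divergence identity
\begin{equation*}
\div\bigl(\sigma^{2}\, \nabla v_i \bigr) = 0 \qquad \text{in } \mathbb{R}^n.
\end{equation*}
The symmetry conclusion (level sets are hyperplanes) is equivalent to $v_i$ being constant for every $i$, so Step 2 is to derive the above weighted-harmonicity identity cleanly, and Step 3 is to prove the following Liouville property: any function $v$ satisfying $\div(\sigma^{2}\nabla v)=0$ on $\mathbb{R}^n$ must be constant, provided $\sigma^{2}$ grows appropriately.

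For Step 3 I would test the identity against $\eta^{2}v_i$ with a logarithmic cutoff $\eta=\eta_R$ equal to $1$ on $B_{\sqrt{R}}$, vanishing outside $B_R$, and obeying $|\nabla \eta|\le C/(|x|\log R)$. The standard manipulation gives
\begin{equation*}
\int_{B_{\sqrt R}} \sigma^{2}\, |\nabla v_i|^{2} \; \le \; \frac{C}{(\log R)^{2}} \int_{B_R \setminus B_{\sqrt R}} \sigma^{2}\, \frac{v_i^{2}}{|x|^{2}}.
\end{equation*}
Because $|u|\le 1$ together with interior gradient estimates gives $|\sigma|, |\partial_{x_i}u| \le C$, one has $\sigma^{2}v_i^{2}= (\partial_{x_i}u)^{2}\le C$, so the right-hand side is bounded by $C/\log R$ when $n=2$, which sends $\nabla v_i = 0$ in the limit. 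This is the Ghoussoub-Gui argument and gives the conjecture for $n=2$. For $n=3$, the Ambrosio-Cabr\'e refinement upgrades the cutoff analysis by using the sharper energy estimate $\int_{B_R}|\nabla u|^{2}\le CR^{n-1}$, valid for minimizers/bounded monotone solutions, and this still closes for $n=3$.

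The main obstacle is precisely the dimension. The logarithmic testing is tight only in $n=2$, and even with the optimal energy growth $O(R^{n-1})$ the scheme collapses beyond $n=3$. In $4\le n\le 8$ one must abandon the weighted Liouville approach and instead follow Savin's route: establish one-dimensional symmetry of global minimizers via improvement of flatness and viscosity methods, then use that blowdowns of monotone bounded solutions are minimizing. Finally, the dimension restriction $n\le 8$ is genuinely sharp, in view of the del Pino-Kowalczyk-Wei counterexample for $n\ge 9$ built on the Bombieri-De Giorgi-Giusti minimal graph, so any proposed proof must at some point exploit a feature, such as the nonexistence of nontrivial stable minimal cones in low dimensions, that is specific to $n\le 8$.
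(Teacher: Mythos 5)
The statement you are asked to prove is labeled as a \emph{Conjecture} in the paper, and the paper offers no proof of it: it only surveys the literature (Ghoussoub--Gui for $n=2$, Ambrosio--Cabr\'e and Alberti--Ambrosio--Cabr\'e for $n=3$, Savin for $4\le n\le 8$ under the extra hypothesis \eqref{asymp}, and the del Pino--Kowalczyk--Wei counterexample for $n\ge 9$) before turning to its actual subject, which is the nonlocal analogue \eqref{main}. So there is no internal proof to compare yours against, and more importantly, the full statement for $4\le n\le 8$ remains open.

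Your Steps 1--3 are a correct account of the two-dimensional argument: monotonicity gives stability, the quotients $v_i=\partial_{x_i}u/\sigma$ satisfy $\div(\sigma^2\nabla v_i)=0$, and the logarithmic cutoff closes the Liouville argument in $n=2$; the Ambrosio--Cabr\'e upgrade via the energy bound $\int_{B_R}|\nabla u|^2\le CR^{n-1}$ indeed handles $n=3$. The genuine gap is in your last paragraph. Savin's theorem does \emph{not} prove the conjecture as stated for $4\le n\le 8$: it requires the additional hypothesis $\lim_{x_n\to\pm\infty}u(x',x_n)=\pm 1$ for every $x'$, which the conjecture does not assume. That hypothesis is what guarantees that the profiles at infinity are the pure states $\pm1$ and that blowdowns of $u$ are local minimizers, which is the entry point for the improvement-of-flatness scheme; without it, the reduction to one-dimensional symmetry of minimizers is not available, and no proof is known. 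So ``follow Savin's route'' cannot be completed into a proof of the statement as written, and indeed the statement in dimensions $4\le n\le 8$ without \eqref{asymp} is still an open problem. If you restrict to $n=2$ (or $n=3$), your outline is essentially the standard proof; for the general statement you should say explicitly that it is conjectural.
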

If monotonicity is replaced by stability, this is known as the {\it stability conjecture}. The De Giorgi's conjecture was established by Ghoussoub and Gui in \cite{gg1} 
in two dimensions. In fact, the proof is valid for the stability conjecture and for any $f\in C^1(\mathbb R)$ that is 
\begin{equation}\label{allen}
\Delta u+f(u) =0	 \quad \text{in}\ \  \mathbb {R}^n,
\end{equation}
as it is structured based on a  linear Liouville-type theorem for elliptic equations in divergence form, see (\cite{bbg, gg1}). Ambrosio and Cabr\'{e} in \cite{ac}, and later with  Alberti in \cite{aac}, extended the result to dimension $n= 3$ by adjusting the linear Liouville theorem. 
Ghoussoub and Gui also showed in \cite{gg2} that the conjecture holds for $n = 4$ or $n = 5$ for solutions that satisfy certain antisymmetry conditions, and Savin in \cite{savin} established its validity   for $4 \le n \le 8$ under the following additional natural hypothesis on the solution,
 \begin{equation}\label{asymp}
\lim_{x_n\to\pm\infty } u({x}',x_n)\to \pm 1  \quad \text{for}\ \  x'\in\mathbb {R}^{n-1}. 
\end{equation}
In dimension $n \ge 9$,  del Pino-Kowalczyk-Wei in \cite{dkw} gave a counterexample to De Giorgi's conjecture which long believed to exist. 
 Under a much stronger assumption that the limits in \eqref{asymp} are uniform in ${x}'$, the conjecture is known as  {\it Gibbons' conjecture}. This conjecture was proved for all dimensions independently  with different methods  by Barlow, Bass and Gui in \cite{bbg}, Berestycki, Hamel and Monneau in \cite{bhm} and Farina in \cite{far}.

 In this article, we consider a nonlocal counterpart of (\ref{delq})  where the Bunsen fames are  made of two fames: a diffusion
fame and a nonlocal premixed fame, 
  \begin{equation} \label{main}
\Delta u + c L[u]  +  f(u)=0  \quad  \text{in} \ \  \RR^n , 
  \end{equation}   
  when $c$ is a positive  constant and the operator $L$ is defined by the nonlocal operator 
 \begin{equation} \label{L}
L[u(x)] :=   \lim_{\epsilon\to 0} \int_{\{y\in \mathbb R^n, |x-y|>\epsilon\} } [u(y) - u(x)] J (x,y) dy. 
    \end{equation}   
   We suppose that $f\in C^1(\mathbb R)$  and   $J$ is a  nonnegative measurable symmetric  even jump kernel, unless otherwise is stated. We establish De Giorgi type results  for bounded stable solutions and various energy estimates for this equation. In probability theory, such operators have been studied extensively and they are known as jump-diffusion processes and Brownian motions with Gaussian components,  see \cite{bku,cg, cks}.  A (rotationally) symmetric $\alpha$-stable process $Y=(Y_t, t\ge 0, \mathbb P_x, x\in\mathbb R^n)$ in $\mathbb R^n$ is a L\'{e}vy process that 
\begin{equation}
\mathbb E_x\left[ e^{i\zeta\cdot (Y_t-Y_0)}\right]= e^{-t|\zeta|^\alpha} \ \ \text{for every} \ x,\zeta\in\mathbb R^n. 
\end{equation}
The infinitesimal generator of a symmetric $\alpha$-stable process $Y$ in $\mathbb R^n$ is the fractional Laplacian operator $\Delta^{\frac{\alpha}{2}}$ that is  a prototype of nonlocal operators when $0<\alpha<2$. The fractional Laplacian operator is of the form
\begin{equation}
\Delta^{\frac{\alpha}{2}} u(x) :=  \lim_{\epsilon\to 0} \int_{\{y\in \mathbb R^n, |x-y|>\epsilon\} } [u(y) - u(x)] \frac{\mathcal C (n,\alpha)}{|x-y|^{n+\alpha}} dy, 
\end{equation}
for $\mathcal C(n,\alpha):=\alpha 2 ^{\alpha-1}\pi^{-n/2} \Gamma\left(\frac{n+\alpha}{2}\right)\Gamma\left(1-\frac{\alpha}{2}\right)^{-1}$,  and the jumping kernel in (\ref{L}) is
\begin{equation}\label{JFrac}
  J(x,y)=  \mathcal C(n,\alpha)|x-y|^{-n-\alpha}.
  \end{equation} 
The operator $L=\Delta^{\frac{\alpha}{2}}$ when $0<\alpha<2$ can be regarded as an interpolation of identity and Laplacian in the sense that 
\begin{equation}
\lim_{\alpha\to 0^+ } \Delta^{\frac{\alpha}{2}} u = u \ \ \text{and} \ \ \lim_{\alpha\to 2^- }  \Delta^{\frac{\alpha}{2}} u =  \Delta u, 
\end{equation}
under certain conditions, see \cite{dpv}.  Note that the kernel  (\ref{JFrac}) is a particular case of the following kernel, known as ellipticity condition for operator $L$, 
\begin{equation}\label{Jumpc}
 J (x,z) =  c(x-z) |x-z|^{-n-\alpha}, 
 \end{equation} 
where $c(x-z)$ is bounded between two positive constants $0<\lambda \le \Lambda$. 
 It is by now a well-known, see \cite{cas} by Caffarelli and Silvestre,  that the fractional Laplacian operator can be realized as the boundary operator (more precisely the Dirichlet-to-Neumann operator) of a suitable extension function in the half-space.  Now, let $X^0$ be a Brownian motion in $\mathbb R^n$ with generator $\Delta $ Laplacian operator and $Y$ be a symmetric $\alpha$-stable process in $\mathbb R^n$. Assume that $X^0$ and $Y$ are independent. Consider the process $X^a$ given by 
\begin{equation}
X^a_t:=X^0_t+aY_t,
\end{equation}
 that is the independent sum of the Brownian motion  $X^0$ and  the
symmetric $\alpha$-stable process $Y$ with weight $a>0$.   The infinitesimal generator of $X^a$
 is the elliptic operator 
 \begin{equation}
 M_{a,\alpha}:=\Delta +c \Delta ^{\frac{\alpha}{2}} \ \ \ \text{for} \ \ c:=a^\alpha,   
 \end{equation}
  and the function $J_a(x,y)=a^\alpha  J(x,y)$, when $J$ is given by (\ref{JFrac}),  is the L\'{e}vy  intensity of $X^a$. Various aspects of the process $X^a$ and the operator $M_{a,\alpha}$, such as boundary Harnack principle (BHP), De Giorgi-Nash-Moser-Aronson type theory and  heat kernel and Green's function estimates,  are studied in the literature. In this regard we refer interested readers to series of article by Chen et  al.  in \cite{cksv1, cksv2, cksv3, cksv4, ck} and references therein. Let ${\hat X}^a$ be a L\'{e}vy process obtained from $X^a$ by eliminating all its jumps of larger than $\delta_0$. Then, the infinitesimal generator of $\hat X$ is 
   \begin{equation}
 \hat M_{a,\alpha}:=\Delta +c {\hat \Delta}^\frac{\alpha}{2},   
 \end{equation}
 where 
  \begin{equation}\label{hatfrac}
{\hat\Delta}^{\frac{\alpha}{2}} u(x) :=  \lim_{\epsilon\to 0} \int_{\{y\in \mathbb R^n, \epsilon<|x-y|<\delta_0 \} } [u(y) - u(x)] \frac{\mathcal C(n,\alpha)}{|x-y|^{n+\alpha}} dy. 
\end{equation}
Note that ${\hat\Delta}^{\frac{\alpha}{2}}$ is associated to nonlocal operator $L$ in (\ref{L}) with the truncated jump kernel 
 \begin{equation}
  J(x,y)=  \mathcal C(n,\alpha)|x-y|^{-n-\alpha} \mathds{1}_{\{|x-y|\le \delta_0\}}. 
  \end{equation}

It is also known in the probability theory that suitable
 estimates for the L\'{e}vy process $X^a$ can be obtained from $\hat X^a$  by adding back the jumps of $X^a$ of size larger than $\delta_0$, see \cite{cr} where Schramm-L\"{o}wner
evolutions are studied in the light of one-dimensional symmetric stable processes. Stochastic processes with truncated jump kernels, known also as finite range jump processes, and their associated  infinitesimal generators are studied in the literature, in regards to probability theory see \cite{bku, ckk, bbck, cksv3, fot} and in regards to elliptic partial differential equations see \cite{cp,fgu,fs,hrsv, bbg} and 
references therein.   In addition to above jumping kernels,  the following truncated kernels, which are locally comparable to (\ref{JFrac}) are of our interests 
 \begin{equation}\label{Jumpki}
\frac{\lambda}{|x-y|^{n+\beta}} \mathds{1}_{\{|x-y|\le \delta_1\}} \le  J  (x-y) \le  \frac{\Lambda}{|x-y|^{n+ \alpha}} \mathds{1}_{\{|x-y|\le \delta_0\}}, 
 \end{equation} 
 when  $0<  \delta_1 \le  \delta_0$, $0<\lambda\le \Lambda$ and $0<  \beta\le  \alpha<2$. In the context of classical De Giorgi's conjecture and in order to establish Gibbons' conjecture and to establish a linear Liouville theorem, Barlow, Bass and Gui in \cite{bbg} studied generators and Dirichlet forms of  symmetric processes with truncated jump kernels of the form 
 \begin{equation}
 J_0(x,y)= |x-y|^{-(n+1)} \mathds{1}_{\{|x-y|\le 1\}} \ \ \text{ for all } x,y\in\mathbb R^n, \ \ x\neq y. 
 \end{equation}
 Note that this is a particular case of (\ref{Jumpki}) that represents ${\hat \Delta}^{\frac{1}{2}}$, as given in (\ref{hatfrac}). In addition, they considered kernels with decays of the form 
 \begin{eqnarray}\label{}
&&\frac{c_1}{|x-y|^{n+1}} \le  J_1  (x,y) \le  \frac{c_2}{|x-y|^{n+ 1}} , \ \ \text{for} \ \  |x-y|\le 1 \ \ \text{and}
\\&& 
  \int_{|x-y|>r} J_1   (x,y)  dy \le c_0 e^{-\alpha_0 r}\ \ \ \text{when} \ \ \ r > 1 , 
  \end{eqnarray}
  where $c_i$ for $i=0,1,2$ and $\alpha_0$ are positive constant. In addition, Chen et al.  in \cite{cksv4,ckk2} considered rotationally symmetric L\'{e}vy processes on $\mathbb R^n$ whose L\'{e}vy measure decays exponentially near
infinity at exponential rate $e^{-r^\beta}$ with $\beta>1$.   Inspired by the above,  we study generators of L\'{e}vy processes  obtained from $X^a$ by eliminating all its jumps of larger than $\delta_0$ and replacing those with jumps with certain decay rates. More precisely, we consider 
\begin{eqnarray}\label{JDecay}
&&\frac{\lambda}{|x-y|^{n+\beta}} \le J (x-y) \le  \frac{\Lambda}{|x-y|^{n+\alpha }} \ \ \ \text{when} \ \ \ |x-y|\le \delta_0  \ \ \text{and}
\\
&&\label{JDecayr}
\int_{r<|x-y|<2r} J   (x-y)  dy \le C D (r)\ \ \ \text{when} \ \ \ r > \delta_0 ,  
 \end{eqnarray} 
for an appropriate algebraic decay function  $0\le D \in C(\mathbb R^+)$ with $\lim_{r\to\infty} D (r)=0$. We shall fix function $D$ later. The quadratic form $(\mathcal I, \mathcal F)$, also called the Dirichlet form, associated with the generator $-\Delta - c L$ is given by $\mathcal F:=W^{1,2}(\mathbb R^n)$ and for $u,v\in\mathcal F$,  
\begin{equation}\label{mathcalI} 
\mathcal I(u,v) :=\int_{\mathbb R^n} \nabla u(x)\cdot \nabla v(x) dx+\frac{c}{2} \iint_{\RR^{2n}} [u(x)-u(y)][v(x)-v(y)] J(x,y) dx dy.  
\end{equation}
 The associated  energy functional for solutions of  (\ref{main}) on 
 $\Omega\subset \mathbb R^n$ is 
\begin{equation}\label{energy}
\mathcal E(u,\Omega):=\mathcal E_J^{\text{Sob}}{(u,\Omega)} + \mathcal E^{\text{Pot}} {(u,\Omega)} , 
\end{equation}
and  functionals  $\mathcal E^{\text{Sob}}$ and $\mathcal E^{\text{Pot}} $ are given by 
\begin{equation}\label{Esob}
\mathcal E_J^{\text{Sob}}(u,\Omega):=  \frac{1}{2} \int_{\Omega} |\nabla u(x)|^2 dx + \frac{c}{2} \iint_{\mathbb R^{n}\times \mathbb R^{n} \setminus  \mathcal C\Omega \times   \mathcal C\Omega}  | u(x) -u(y) |^2 J (x-y) dy dx, 
\end{equation}
when $\mathcal C\Omega = \mathbb R^n\setminus \Omega$ and 
\begin{equation}\label{Epot}
\mathcal E^{\text{Pot}} {(u,\Omega)} :=\int_{\Omega}  F(u(x)) dx, 
\end{equation}
when $F\in C^1(\mathbb R)$  is an antiderivative of $-f$.

\begin{dfn} A solution $u$ of (\ref{main}) is called stable if the second variation of $\mathcal E$ at $u$ is
nonnegative, that is for any $\zeta\in C_c^1(\mathbb R^n)$, 
\begin{equation} \label{stability}
\int_{\RR^n}  f'(u(x)) \zeta^2(x) dx \le \frac{1}{2} \int_{\mathbb R^n} |\nabla \zeta(x)|^2 dx+ \frac{c}{2} \iint_{\RR^{2n}}   [\zeta(x)- \zeta(y)]^2 J(x-y) dy dx . 
\end{equation} 
\end{dfn}

\begin{dfn}\label{dfnsym} 
We call $\Gamma_{2R,R}=\cup_{i=1}^6\Gamma^i_R$ a symmetric domain decomposition of $\mathbb R^n\times \mathbb R^n$ when every $\Gamma^i_R$ is given by
\begin{eqnarray*}\label{}
&&  \Gamma^1_R:=B_{ R}\times (B_ {2R}\setminus B_{R}),  \Gamma^2_R:=(B_ {2R}\setminus B_{ R})\times (B_ {2R}\setminus B_{ R}), \Gamma^3_R:= (\mathbb R^n\setminus B_{2R}) \times (B_ {2R}\setminus B_{ R}) ,\ 
\\&& \label{gamma3}   \ \Gamma^4_R:=B_{ R}\times (\mathbb R^n\setminus B_{2R}), \ \Gamma^5_R:=B_{R}\times B_{R},  \ \Gamma^6_R:=(\mathbb R^n\setminus B_{2R})\times (\mathbb R^n\setminus B_{2R}). 
\end{eqnarray*}
\end{dfn}

The structure of the article as it follows. In Section \ref{secmain}, we provide our main results. In Section \ref{secthm1}, we prove a Poincar\'{e} type inequality, a Liouville theorem and  De Giorgi type results. In Section \ref{secen}, we prove energy estimates for jump-diffusion processes with various  jump kernels.  In Section \ref{secham}, we prove a Modica type pointwise estimate and a Hamiltonian identity in one dimension,  and monotonicity formulae in higher dimensions.   Section \ref{secsum} is devoted to summation of nonlocal operators.

\section{Main Results; Statements}\label{secmain}
In this section we present our main results of this article. We start with a linear Liouville theorem for a local-nonlocal operator. This theorem is inspired by a classical one for the Laplacian operator that was noted by Berestycki, Caffarelli and Nirenberg in \cite{bcn} and by Barlow, Bass and Gui in \cite{bbg}, see also \cite{bar}, used by Ghoussoub and Gui \cite{gg1} and later by Ambrosio and Cabr\'{e} \cite{ac}  to prove the De Giorgi conjecture in dimensions two and three. 
For the case of fractional Laplacian, using the Caffarelli and Silvestre extension function in \cite{cas}, this type linear Liouville theorem is given by Cabr\'{e} and  Sol\'{a}-Morales in \cite{csol} and by Cabr\'{e} and Sire  in \cite{cabSire1,cabSire2}.  For more general nonlocal operators, we refer to Hamel et al. in \cite{hrsv} and to Sire and the author in \cite{fs}.

\begin{thm}\label{thmlione} Let $\phi \in L^{\infty}_{loc}(\mathbb{R}^n) $ and $\sigma \in H^1_{loc}(\mathbb{R}^n)$  such that $\phi^2 >0 $ a.e.,  and they satisfy  
\begin{equation}\label{linPhi}
\div(\phi^2(x)\nabla \sigma(x)) +    \lim_{\epsilon\to 0} \int_{\{y\in \mathbb R^n, |x-y|>\epsilon\} }  \left( \sigma(y)- \sigma(x) \right)\phi(x) \phi(y)  J(x-y) dy  = 0 \ \ \text{in} \ \ \mathbb R^n , 
\end{equation}
  when $J$ is a nonnegative  measurable symmetric even jumping kernel.  Assume also that $\Gamma_R$ is a symmetric domain decomposition, in Definition \ref{dfnsym}, and for $R>1$
\begin{equation}\label{Phiuxuy}
\int_{\RR^{n}\cap (B_{2R}\setminus B_R)}  \phi^2\sigma^2 dx +   \iint_{\RR^{2n}\cap \{\cup_{k=1}^4 \Gamma^k_R \}}   [\sigma(x) +  \sigma(y)]^2  \phi(x) \phi(y) |x-y|^2 J(x-y)   dy dx \le C R^2 ,
 \end{equation}
where $\Gamma_{2R,R}=\cup_{k=1}^6\Gamma^k_R$ is a symmetric domain decomposition.  Then,  $\sigma$ must be constant. 
\end{thm}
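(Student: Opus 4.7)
The plan is a Caccioppoli-type energy estimate combined with a logarithmic cutoff refinement, in the spirit of Berestycki-Caffarelli-Nirenberg, Ghoussoub-Gui, and Ambrosio-Cabr\'e, adapted to incorporate the nonlocal term. The strategy is to test the weak form of (\ref{linPhi}) against $v=\sigma\eta^2$, symmetrize the nonlocal integral, apply Young's inequality to absorb all cross terms, and then choose $\eta=\eta_R$ so the right-hand side vanishes as $R\to\infty$.

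First I would multiply (\ref{linPhi}) by $\sigma\eta^2$ and integrate. The local part yields, after integration by parts, $-\int\phi^2|\nabla\sigma|^2\eta^2\,dx -2\int\phi^2\sigma\eta\,\nabla\sigma\cdot\nabla\eta\,dx$. For the nonlocal part I would use the symmetry $J(x-y)=J(y-x)$ together with the swap $x\leftrightarrow y$ to rewrite the double integral as
\[-\frac{1}{4}\iint (\sigma(y)-\sigma(x))\bigl(\sigma(y)\eta^2(y)-\sigma(x)\eta^2(x)\bigr)\phi(x)\phi(y)J(x-y)\,dx\,dy,\]
and then expand via $\sigma(y)\eta^2(y)-\sigma(x)\eta^2(x)=\tfrac{1}{2}(\sigma(y)-\sigma(x))(\eta^2(x)+\eta^2(y))+\tfrac{1}{2}(\sigma(y)+\sigma(x))(\eta^2(y)-\eta^2(x))$ to extract a pure Dirichlet-form piece plus a cross term. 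Young's inequality applied to both cross terms, using $(\eta(x)+\eta(y))^2\le 2(\eta^2(x)+\eta^2(y))$ on the nonlocal side, absorbs them into the left-hand side and produces the Caccioppoli inequality
\[\int\phi^2|\nabla\sigma|^2\eta^2\,dx+c_0\iint(\sigma(y)-\sigma(x))^2(\eta^2(x)+\eta^2(y))\phi(x)\phi(y)J(x-y)\,dx\,dy\le C\,\mathcal{R}(\eta),\]
where $\mathcal{R}(\eta):=\int\phi^2\sigma^2|\nabla\eta|^2\,dx+\iint(\sigma(x)+\sigma(y))^2(\eta(y)-\eta(x))^2\phi(x)\phi(y)J(x-y)\,dx\,dy$.

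Next I would take $\eta_R$ to be a logarithmic cutoff equal to $1$ on $B_{R_0}$, decaying as $\log(R/|x|)/\log(R/R_0)$ on $B_R\setminus B_{R_0}$, and vanishing outside $B_R$, so that $|\nabla\eta_R(x)|\le C/(|x|\log(R/R_0))$. Since $\eta_R$ is constant on $B_{R_0}\times B_{R_0}$ and on $(\mathbb{R}^n\setminus B_R)\times(\mathbb{R}^n\setminus B_R)$, only pieces of the form $\cup_{k=1}^4\Gamma^k_{R_j}$ at dyadic scales $R_j=2^jR_0$ contribute to $\mathcal{R}(\eta_R)$. Decomposing both integrals dyadically and applying the hypothesis (\ref{Phiuxuy}) at each scale $R_j$, together with the pointwise bound $|\eta_R(y)-\eta_R(x)|\le C|x-y|/(\min(|x|,|y|)\log(R/R_0))$ on the transition annulus, yields
\[\mathcal{R}(\eta_R)\le \frac{C}{\log^2(R/R_0)}\sum_{j=0}^{\lfloor\log_2(R/R_0)\rfloor}\frac{1}{R_j^2}\cdot CR_j^2 \le \frac{C}{\log(R/R_0)}\xrightarrow[R\to\infty]{}0.\]
Hence $\int_{B_{R_0}}\phi^2|\nabla\sigma|^2\,dx=0$ for every $R_0>1$, and since $\phi^2>0$ almost everywhere, $\nabla\sigma\equiv 0$ almost everywhere, so $\sigma$ is constant.

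The main obstacle I anticipate is the nonlocal cross term in $\mathcal{R}(\eta_R)$: a plain Lipschitz cutoff of diameter $R$ combined with (\ref{Phiuxuy}) gives only a bounded (not vanishing) estimate, so the logarithmic refinement is essential, and on the nonlocal side one must carefully match the spatial dyadic scale of $x$ with the scale at which the hypothesis is invoked over the four pieces $\Gamma^1_{R_j},\dots,\Gamma^4_{R_j}$ of Definition \ref{dfnsym}. Controlling $(\eta_R(y)-\eta_R(x))^2$ uniformly across both the near-diagonal and long-range regimes of $J$, so that the dyadic sum closes at order $1/\log(R/R_0)$, is the delicate step.
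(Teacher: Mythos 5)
Your proposal is correct, and its first half --- testing the weak form of (\ref{linPhi}) against $\sigma\eta^2$, symmetrizing the double integral, splitting $\sigma(y)\eta^2(y)-\sigma(x)\eta^2(x)$ into a Dirichlet-form piece plus a cross term, and absorbing the cross terms by Cauchy--Schwarz/Young --- is exactly what the paper does (your symmetrization constant should be $-1/2$ rather than $-1/4$, which is immaterial). Where you genuinely diverge is the endgame. The paper takes the plain cutoff ($\eta\equiv 1$ on $B_R$, supported in $B_{2R}$, $|\nabla\eta|\le CR^{-1}$), for which hypothesis (\ref{Phiuxuy}) makes the right-hand side merely \emph{bounded}; it then deduces $I^2+K^2\le C(I+K)$, hence $I,K\le C$ uniformly in $R$, and asserts $I\equiv K\equiv 0$. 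That last implication is not automatic from boundedness alone: it needs a second pass in which one observes that, the global energies now being finite, the Cauchy--Schwarz cross terms live on the sets $B_{2R}\setminus B_R$ and $\cup_{k=1}^4\Gamma^k_R$, whose indicators tend to zero pointwise, so dominated convergence forces $I=0$. Your logarithmic cutoff buys a one-shot conclusion: the extra $1/\log^2(R/R_0)$ against the $\sim\log(R/R_0)$ contributing dyadic scales makes $\mathcal R(\eta_R)\to 0$ directly, at the price of the dyadic bookkeeping you flag. That bookkeeping does close: the pointwise bound $|\eta_R(x)-\eta_R(y)|\le C|x-y|/\big(\max(R_0,\min(|x|,|y|))\log(R/R_0)\big)$ holds for \emph{all} pairs --- including the long jumps $x\in B_{R_0}$, $y\notin B_R$, where the right-hand side simply exceeds $1$ --- and every pair with $\eta_R(x)\ne\eta_R(y)$ lies (up to swapping $x$ and $y$) in some $\Gamma^k_{R_j}$ with $R_j\sim\max(R_0,\min(|x|,|y|))$, so invoking (\ref{Phiuxuy}) at that scale gives $C/\log^2(R/R_0)$ per scale, as you claim. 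One caveat shared by both arguments: keeping the nonlocal quadratic form on the left requires $\phi(x)\phi(y)\ge 0$, which the stated hypothesis $\phi^2>0$ does not by itself guarantee; the theorem should be read with $\phi>0$ a.e., as in the intended application $\phi=\partial_{x_n}u$.
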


The next theorem is a Poincar\'{e} type inequality for stable solutions of (\ref{main}). For the case of local semilinear equations, this inequality was established by Sternberg and Zumbrun in \cite{sz1,sz2}. The inequality was used in \cite{fsv} to prove certain De Giorgi type results for scalar equations and in \cite{fgo} for multi-component systems.  For the case of nonlocal equations, such an inequality was derived in  \cite{fs,fgu,cf,sv} and references therein. 

\begin{thm}\label{ThmPoin}
 Assume that  $n\ge 1$ and $ u$ is a stable solution of (\ref{main}).  Then,  for any $\eta \in C_c^1(\mathbb R^{n})$, 
\begin{eqnarray}\label{poinsysm1}
&&\int_{  \mathbb R^{n}\cap  \{|\nabla_x u|\neq 0\}}   \left(   |\nabla u|^2 \mathcal{\kappa}^2 + | \nabla_T |\nabla u| |^2  \right)\eta^2 dx +\frac{c}{2}\iint_{  \mathbb R^{2n}\cap  \{|\nabla_x u|\neq 0\}}    \mathcal A_y(\nabla_x u)  [\eta^2(x)+\eta^2(x+y)] J(y) dx dy 
\\&\le& \nonumber  \int_{\mathbb R^n} |\nabla u|^2 |\nabla \eta|^2 dx+ \frac{c}{2}  \iint_{  \mathbb R^{2n}}  \mathcal B_y(\nabla_x u) [ \eta(x) - \eta(x+y) ] ^2 J(y) d x dy  , 
  \end{eqnarray} 
where $\nabla_T$ stands for the tangential gradient along a given level set of $u$ and 
$\mathcal{\kappa}^2$ for the sum of the squares of the principal curvatures of such a level set, and 
\begin{eqnarray}\label{mathcalA}
\mathcal A_y(\nabla_x u) &:= & |\nabla_x u(x)|  |\nabla_x u(x+y)| -\nabla_x u(x)  \cdot \nabla_x u(x+y) , 
\\ \label{mathcalB}
 \mathcal B_y(\nabla_x u)  & :=& |\nabla_x u(x)| | \nabla_x u(x+y)| .
 \end{eqnarray}
 \end{thm}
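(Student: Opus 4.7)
My plan is to mimic the classical Sternberg--Zumbrun strategy, adapting it to the mixed local/nonlocal setting. The starting point is the linearized equation: differentiating (\ref{main}) in direction $e_i$ gives
\[
\Delta(\partial_i u)+cL[\partial_i u]+f'(u)\,\partial_i u=0,\qquad i=1,\ldots,n.
\]
I would multiply by $(\partial_i u)\eta^2$, integrate over $\mathbb R^n$, and sum over $i$. For the Laplacian part, one integration by parts yields
\[
\sum_i\int (\partial_i u)\Delta(\partial_i u)\eta^2\,dx
=-\int |\nabla^2 u|^2\eta^2\,dx
-2\int \eta |\nabla u|\nabla|\nabla u|\cdot\nabla\eta\,dx.
\]
For the nonlocal term, I use the standard integration-by-parts formula induced by the even kernel $J$, and then exploit symmetry by writing
\[
[v_i(x)-v_i(y)]\bigl[v_i(x)\eta^2(x)-v_i(y)\eta^2(y)\bigr]
=\tfrac{\eta^2(x)+\eta^2(y)}{2}[v_i(x)-v_i(y)]^2+\text{antisym.}
\]
The antisymmetric piece integrates to zero against the symmetric $J$, leaving, after summing in $i$, the clean identity
\[
\int f'(u)|\nabla u|^2\eta^2
=\int |\nabla^2 u|^2\eta^2+2\int\eta|\nabla u|\nabla|\nabla u|\cdot\nabla\eta
+\tfrac{c}{4}\iint|\nabla u(x)-\nabla u(y)|^2[\eta^2(x)+\eta^2(y)]J\,dy\,dx.
\]

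Next I would plug the scalar test $\zeta=|\nabla u|\eta$ into the stability inequality (\ref{stability}). Expanding $|\nabla(|\nabla u|\eta)|^2$ and subtracting the equation identity from the stability bound cancels the cross term $\int\eta|\nabla u|\nabla|\nabla u|\cdot\nabla\eta$ and produces, on the local side, the combination $\int\{|\nabla^2 u|^2-|\nabla|\nabla u||^2\}\eta^2$. At this stage I invoke the Sternberg--Zumbrun pointwise identity, valid on $\{|\nabla u|\neq 0\}$,
\[
|\nabla^2 u|^2-|\nabla|\nabla u||^2=|\nabla u|^2\kappa^2+|\nabla_T|\nabla u||^2,
\]
which produces the desired curvature/tangential-gradient integrand on the left-hand side.

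For the nonlocal residue, the key trick is the pair of pointwise algebraic decompositions
\[
|\nabla u(x)-\nabla u(y)|^2=\bigl[|\nabla u(x)|-|\nabla u(y)|\bigr]^2+2\,\mathcal A_y(\nabla u),
\]
\[
\bigl[|\nabla u(x)|\eta(x)-|\nabla u(y)|\eta(y)\bigr]^2
=\tfrac{\eta^2(x)+\eta^2(y)}{2}\bigl[|\nabla u(x)|-|\nabla u(y)|\bigr]^2+\mathcal B_y(\nabla u)[\eta(x)-\eta(y)]^2+\text{antisym},
\]
the second of which I would verify by a direct expansion. After integrating against $J$ the antisymmetric remainder vanishes, the $\bigl[|\nabla u(x)|-|\nabla u(y)|\bigr]^2$ terms coming from the two sides cancel (or are absorbed), and one is left precisely with $\mathcal A_y$ on the left and $\mathcal B_y$ on the right, matching (\ref{poinsysm1}). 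Passing from $\iint J(x-y)$ to $\iint J(y)$ is just the substitution $y\mapsto x+y$ together with the evenness of $J$.

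The main obstacle, in my view, is the nonlocal side: showing that the symmetrized form of $\sum_i v_i L[v_i]$ together with the stability term involving $|\nabla u|\eta$ reassembles into the clean $\mathcal A_y$--$\mathcal B_y$ pair. The algebra is delicate because $\mathcal A_y$ has an intrinsic geometric flavour (nonnegativity expressing the pointwise nonlocal analog of $|\nabla^2 u|^2-|\nabla|\nabla u||^2\geq 0$) that only emerges after one combines the two decompositions above and discards antisymmetric remainders using that $J$ is even. A secondary technical point is ensuring enough regularity to differentiate (\ref{main}) and to test against $\eta^2\partial_i u$ and $|\nabla u|\eta$; this is handled by standard interior regularity for $\Delta+cL$ together with a small regularization $|\nabla u|\leadsto\sqrt{|\nabla u|^2+\varepsilon^2}$ to legitimately restrict integrals to $\{|\nabla u|\neq 0\}$ before sending $\varepsilon\to 0$.
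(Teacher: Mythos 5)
Your proposal is correct and follows essentially the same route as the paper: test the stability inequality with $\zeta=|\nabla u|\eta$, test the differentiated equation against $\partial_{x_k}u\,\eta^2$ and sum over $k$, subtract, and invoke the Sternberg--Zumbrun identity; the only difference is that you symmetrize the nonlocal quadratic forms into ``difference-squared'' format up front, whereas the paper keeps the cross terms and applies the identity $\eta(x)\eta(x+y)=\tfrac12[\eta^2(x)+\eta^2(x+y)]-\tfrac12[\eta(x)-\eta(x+y)]^2$ at the end --- an equivalent rearrangement of the same algebra. Your closing remarks on regularization near $\{|\nabla u|=0\}$ address a point the paper glosses over.
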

We now provide De Giorgi type results for  stable solutions of  (\ref{main}) for truncated or finite range jump kernels in two dimensions. This settles the stability conjecture for jump-diffusion operators. In order to establish this theorem, we apply the Poincar\'{e} type inequality (\ref{ThmPoin}). In addition, the linear Liouville theorem, that is Theorem \ref{thmlione}, can be applied to prove the following as well.   

\begin{thm}\label{Thmain} Let $u$  be a bounded stable solution of  (\ref{main}) in two dimensions when the jump kernel $J$ is truncated and satisfies  (\ref{Jumpki}) for $0<\alpha,\beta<2$.  Then,  $u$ must be a one-dimensional function.   
  \end{thm}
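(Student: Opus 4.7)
The plan is to use the geometric Poincar\'e--type inequality of Theorem \ref{ThmPoin} in the spirit of Sternberg--Zumbrun and Farina: for a stable solution in dimension two, the left-hand side of (\ref{poinsysm1}) encodes the squared principal curvature and the tangential gradient of $|\nabla u|$ along the level sets of $u$, and one shows that the right-hand side vanishes in an appropriate limit, forcing every level set to be a straight line.

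Concretely, since $u$ is bounded, interior elliptic regularity for the jump-diffusion operator $\Delta+cL$ gives $|\nabla u|\in L^\infty(\mathbb R^2)$. I plug into (\ref{poinsysm1}) the standard two-dimensional logarithmic cutoff
$$
\eta_R(x)=\begin{cases} 1, & |x|\le \sqrt R,\\ 2-\dfrac{2\log |x|}{\log R}, & \sqrt R<|x|<R,\\ 0, & |x|\ge R,\end{cases}
$$
which satisfies $\|\nabla\eta_R\|_{L^2(\mathbb R^2)}^2\le C/\log R\to 0$. The local term on the right of (\ref{poinsysm1}) is then at most $\|\nabla u\|_\infty^2\|\nabla\eta_R\|_{L^2}^2\to 0$. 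For the nonlocal term, the truncation (\ref{Jumpki}) restricts $y$ to $|y|\le \delta_0$, and the elementary bound $[\eta_R(x)-\eta_R(x+y)]^2\le |y|^2\sup_{[x,x+y]}|\nabla \eta_R|^2$ together with $|y|^2 J(y)\le \Lambda |y|^{2-n-\alpha}\mathds 1_{\{|y|\le \delta_0\}}$ (integrable since $\alpha<2$) reduces the nonlocal integral to a constant multiple of the local one, hence also vanishes as $R\to\infty$.

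Passing to the limit forces
$$
\int_{\mathbb R^2\cap\{|\nabla u|\ne 0\}}\bigl(|\nabla u|^2\kappa^2+|\nabla_T|\nabla u||^2\bigr)\,dx=0,
$$
together with the vanishing of the nonlocal jump integrand in (\ref{poinsysm1}). In two dimensions a planar curve with zero curvature is a straight line, and the vanishing tangential gradient makes $|\nabla u|$ constant along each such line. Since the level sets are straight and carry constant gradient, they must be pairwise parallel (otherwise two of them would intersect), so there is a unit vector $\nu\in\mathbb R^2$ and a function $g\in C^2(\mathbb R)$ with $u(x)=g(\nu\cdot x)$, as claimed.

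The main obstacle I expect is controlling the nonlocal piece of the right-hand side of (\ref{poinsysm1}); this is precisely why the theorem is stated only for truncated kernels (\ref{Jumpki}). For an untruncated $\alpha$-stable kernel, the long-range interactions in the double integral cannot be absorbed into $\|\nabla\eta_R\|_{L^2}^2$ in dimension two, and a different, more delicate truncation-with-remainder argument would be required. As an alternative route flagged by the theorem statement, one can form the ratio $\sigma=\partial_{x_2}u/\partial_{x_1}u$, verify that $\sigma$ solves the weighted divergence-form equation (\ref{linPhi}) with $\phi=\partial_{x_1}u$, and invoke the linear Liouville Theorem \ref{thmlione} after checking the growth bound (\ref{Phiuxuy}) via the quadratic energy estimate $\mathcal E(u,B_R)\le CR^2$ available for bounded stable solutions in two dimensions; the handling of the zero set $\{\partial_{x_1}u=0\}$ is then dealt with by a standard regularization $\sqrt{(\partial_{x_1}u)^2+\delta^2}$.
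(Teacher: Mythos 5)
Your proposal is correct and follows essentially the same route as the paper: the geometric Poincar\'e inequality of Theorem \ref{ThmPoin} tested on the two-dimensional logarithmic cutoff, with the nonlocal term controlled via the truncation $|y|\le\delta_0$ and the integrability of $|y|^2J(y)$ near the origin (the paper organizes this through the explicit domain decomposition $\Gamma^1_R,\dots,\Gamma^6_R$ and Lemma \ref{lemab}, but the resulting $O(1/\log R)$ bound is the same), after which $\mathcal A_y(\nabla u)\equiv 0$ forces parallel gradient directions and one-dimensionality. The only caveat is in your alternative sketch: for a merely stable solution $\partial_{x_1}u$ need not be positive, so the quotient argument must be run with the positive solution $\chi$ of the linearized equation furnished by Theorem \ref{thmst} rather than with $\phi=\partial_{x_1}u$.
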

 The next result is the counterpart of the above result for the case of jump kernels with an algebraic decay rates at infinity. 
  
  \begin{thm}\label{Thmain2} Let $u$  be a bounded stable solution of  (\ref{main}) in two dimensions when the jump kernel $J$ satisfies (\ref{JDecay}) for $0<\alpha,\beta<2$ with a decay rate (\ref{JDecayr}) when $D(r)<C r^{-\theta}$ for $\theta>3$.  Then,  $u$ must be a one-dimensional function.   
  \end{thm}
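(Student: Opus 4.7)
The plan is to mirror the proof of Theorem~\ref{Thmain}, but with tail estimates replacing the truncation-based short-range ones. Given a bounded stable solution $u$ in $\mathbb R^2$, I would test the Poincar\'e-type inequality (\ref{poinsysm1}) against radial cutoffs $\eta_R \in C_c^1(\mathbb R^2)$ with $\eta_R \equiv 1$ on $B_R$, $\eta_R \equiv 0$ outside $B_{2R}$, and $|\nabla \eta_R| \le C/R$, and let $R \to \infty$. If the right-hand side vanishes in the limit, the nonnegativity of each of the three terms on the left forces $|\nabla u|\kappa \equiv 0$, $\nabla_T |\nabla u| \equiv 0$, and $\mathcal A_y(\nabla u) \equiv 0$ a.e. The last condition means that $\nabla u(x)$ and $\nabla u(x+y)$ are parallel and codirected for almost every pair, which forces $\nabla u$ to point in a single fixed direction and hence $u$ to be one-dimensional.

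For the local term $\int |\nabla u|^2 |\nabla \eta_R|^2 \, dx$, the 2D energy estimate of Section~\ref{secen} (adapted to the decay kernel (\ref{JDecay})--(\ref{JDecayr})) provides $\int_{B_R} |\nabla u|^2 \le CR$, so this term is $O(1/R)$. For the nonlocal right-hand side $\iint \mathcal B_y(\nabla u) [\eta_R(x) - \eta_R(x+y)]^2 J(y) \, dx\, dy$, I would split according to $|y|$ in line with the symmetric decomposition of Definition~\ref{dfnsym}: (i) on $|y| \le \delta_0$, combine the pointwise bound $[\eta_R(x) - \eta_R(x+y)]^2 \le C |y|^2 / R^2$ with $\int_{|y| \le \delta_0} |y|^2 J(y) \, dy < \infty$ (valid because $\alpha < 2$) and a Cauchy--Schwarz step in $x$; (ii) on $\delta_0 < |y| \le R$, the same gradient-based pointwise bound applies and the integral $\int_{\delta_0 < |y| \le R} |y|^2 J(y) \, dy$ is finite by dyadic summation of (\ref{JDecayr}) whenever $\theta > 2$; (iii) on $|y| > R$, use the cruder bound $[\eta_R(x) - \eta_R(x+y)]^2 \le 2 \eta_R^2(x) + 2 \eta_R^2(x+y)$, symmetry in $y$, the interior-regularity bound $|\nabla u| \le C$ for bounded solutions of (\ref{main}), and the tail estimate $\int_{|y| > R} J(y) \, dy \le C R^{-\theta}$ obtained by dyadic summation of (\ref{JDecayr}); this contributes at most $C R^{2-\theta}$.

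The main obstacle is the balance in region (iii) between the 2D volume factor $R^2$ on the $x$-support and the decay of the kernel tail; together with the analogous balance required for the underlying energy estimate, where nonlocal boundary interactions of the form $\iint_{B_R \times B_R^c} J$ must also be compatible with the expected $O(R)$ growth, this is what dictates the hypothesis $\theta > 3$. Once these estimates are in place, sending $R \to \infty$ in (\ref{poinsysm1}) kills the right-hand side and yields the rigidity conclusions outlined in the first paragraph, giving the one-dimensional symmetry of $u$. As an alternative route, one could instead use Theorem~\ref{thmlione}: set $\sigma = \partial_i u / \partial_j u$ on the (open) set $\{\partial_j u \neq 0\}$ and $\phi = \partial_j u$, verify (\ref{linPhi}) by differentiating (\ref{main}), and check the quadratic growth condition (\ref{Phiuxuy}) using the same short/intermediate/long-range split together with the decay $\theta > 3$ to control the nonlocal contribution from $\Gamma_R^4$.
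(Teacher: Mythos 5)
There is a genuine gap in your main route. The paper does not use a linear cutoff $\eta_R$ with $|\nabla\eta_R|\le C/R$; it uses the logarithmic test function (\ref{testeta}) (equal to $\tfrac12$ on $B_{\sqrt R}$, decaying like $(\log R-\log|x|)/\log R$ on $B_R\setminus B_{\sqrt R}$), precisely because in two dimensions the linear cutoff is not good enough for \emph{stable} solutions. With your choice, the local term $\int|\nabla u|^2|\nabla\eta_R|^2\,dx$ is of size $R^{-2}\cdot|B_{2R}\setminus B_R|\cdot\|\nabla u\|_\infty^2=O(1)$, and likewise the short- and intermediate-range parts of the nonlocal term are $R^{-2}\cdot R^2\cdot\int|y|^2J(y)\,dy=O(1)$; neither tends to zero. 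Your proposed fix --- importing the bound $\int_{B_R}|\nabla u|^2\le CR$ from Section \ref{secen} --- is not available here: Theorem \ref{thmene} is proved only for bounded \emph{monotone} solutions satisfying (\ref{asymp}) with $F(\pm1)=0$ (its proof differentiates the energy along the shift $u^t$ and uses $\partial_t u^t>0$), whereas Theorem \ref{Thmain2} concerns stable solutions. The paper's logarithmic cutoff sidesteps this entirely: the extra weight $|x|^{-2}\log^{-2}R$ makes every term on the right-hand side of (\ref{poinsysm1}) of order $1/\log R$ using only $\|\nabla u\|_\infty<\infty$, after splitting each piece $\Gamma^i_R$ of the symmetric decomposition into $|x-y|\le\delta_0$ (handled as in Theorem \ref{Thmain}) and $|x-y|>\delta_0$ (handled with the decay (\ref{JDecayr})).

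Two further points. First, your accounting never actually produces the threshold $\theta>3$: your dyadic tail sums close for $\theta>2$, and you only gesture at a ``balance'' to justify the stronger hypothesis. In the paper the exponent enters through the arithmetic annular covering $\{k\delta_0<|z|<2k\delta_0\}_{k\ge1}$, which yields $\int_{k\delta_0<|z|<2k\delta_0}|z|^2J(z)\,dz\le Ck^{2-\theta}$ and requires $\sum_k k^{2-\theta}<\infty$, i.e.\ $\theta>3$. Second, your alternative route via Theorem \ref{thmlione} with $\phi=\partial_j u$, $\sigma=\partial_i u/\partial_j u$ presupposes $\partial_j u>0$ everywhere (and, to verify (\ref{Phiuxuy}), a Harnack inequality controlling $\phi(x)/\phi(y)$, as the discussion after Proposition \ref{proplin} makes explicit); for a merely stable solution no such positive $\phi$ among the partial derivatives is given, so this does not rescue the argument either.
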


We now provide energy estimates for monotone  solution of (\ref{main}) when the kernel is either truncated or with decays at infinity. The remarkable point is that the upper bound for the energy is $R^{n-1}$, for any $0<\alpha<2$,  that is the same as the one for the Laplacian operator. For the Laplacian operator, this energy bound was established by Ambrosio and Cabr\'{e} in \cite{ac}.  

\begin{thm}\label{thmene}
Let $u$ be a bounded monotone  solution of (\ref{main}) satisfying (\ref{asymp}) and  $F(\pm1)=0$.   Assume also that the kernel $J$ is truncated and satisfies (\ref{Jumpki}) or  the kernel $J$ has decays as in (\ref{JDecay})-(\ref{JDecayr}) with decay-rate  $D(r)< C r^{-\theta}$ when $\theta>2$ for all $r>\delta_0$.   
Then,     
\begin{equation}\label{EKRnminus1}
\mathcal E(u,B_R) \le C  R^{n-1} \ \ \text{for} \ \  R>\delta_0, 
\end{equation}
where the positive constant $C$ is independent from $R$ but may depend on $\delta_0,\alpha,n$.  
\end{thm}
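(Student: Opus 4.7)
The plan is to extend the Ambrosio-Cabré energy estimate \cite{ac} to the jump-diffusion operator $-\Delta - cL$. I would proceed in three stages, with the main technical difficulty arising in the long-range nonlocal interactions.

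\textbf{Step 1 (monotonicity implies minimality).} First I would establish that, under monotonicity of $u$ in $x_n$ together with the uniform limits (\ref{asymp}) and the balancing $F(\pm 1) = 0$, the solution $u$ is a local minimizer of $\mathcal{E}$: for every bounded open $\Omega \subset \mathbb R^n$ and every competitor $v$ with $v \equiv u$ on $\mathcal C\Omega$ and $|v| \le 1$, one has $\mathcal E(u, \Omega) \le \mathcal E(v, \Omega)$. This is the nonlocal analogue of the Alberti-Ambrosio-Cabré principle. The proof exploits the one-parameter sliding family $u^t(x) := u(x + t e_n)$, which by monotonicity foliates the range of $u$; the boundedness of $u$ and the symmetry of $J$ keep the nonlocal cross-terms well-defined and well-behaved along the sliding argument.

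\textbf{Step 2 (competitor and local energy).} Next I would exhibit an explicit competitor $v$ satisfying $\mathcal E(v, B_R) \le CR^{n-1}$. Let $g : \mathbb R \to (-1, 1)$ denote the heteroclinic one-dimensional profile with $g'' + f(g) = 0$ and $g(\pm \infty) = \pm 1$, whose existence is guaranteed by $F(\pm 1) = 0$. Fix a boundary-layer width $\rho \ge \delta_0$ and a smooth radial cutoff $\eta$ with $\eta = 1$ on $B_{R-2\rho}$ and $\eta = 0$ outside $B_{R-\rho}$, and set
\[
v(x) := \eta(x)\, g(x_n) + (1 - \eta(x))\, u(x),
\]
so that $v = u$ on $\mathcal C B_R$. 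The local Dirichlet term $\tfrac12 \int_{B_R} |\nabla v|^2$ decomposes into an interior contribution bounded by $R^{n-1} \int_{\mathbb R} |g'|^2\, dt$ via Fubini (using the exponential decay of $g'$) and a boundary-layer contribution of order $\rho R^{n-1}$. The potential term $\int_{B_R} F(v)$ is controlled by $R^{n-1} \int_{\mathbb R} F(g(t))\, dt + CR^{n-1}$, where the first integral is finite because $F$ vanishes at $\pm 1$ and $g \to \pm 1$ exponentially.

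\textbf{Step 3 (nonlocal energy and main obstacle).} The crux is to bound the nonlocal Sobolev part
\[
\mathcal E_J^{\text{Sob}}(v, B_R) = \frac{c}{2} \iint_{\mathbb R^{2n} \setminus \mathcal C B_R \times \mathcal C B_R} |v(x) - v(y)|^2 J(x-y)\, dy\, dx.
\]
Split the integration domain into $B_R \times B_R$ and the cross pieces $B_R \times \mathcal C B_R$ together with its symmetric counterpart. The $B_R \times B_R$ part reduces via Fubini in the transverse variables $(x', y')$ to the one-dimensional Gagliardo-type seminorm of $g$, which is finite by smoothness and exponential decay, and integration over the transverse slab yields $CR^{n-1}$. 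For the cross part, the truncated-kernel case (\ref{Jumpki}) is immediate: only pairs with $|x-y| \le \delta_0$ contribute, so $x$ must lie within $\delta_0$ of $\partial B_R$, and the bound follows from a volume estimate together with the short-range Gagliardo-type cancellation $|u(x)-u(y)|^2 \lesssim |\nabla u|_\infty^2 |x-y|^2$. For the decay case (\ref{JDecay})-(\ref{JDecayr}) with $D(r) \le Cr^{-\theta}$, I would decompose $B_R$ into dyadic shells $A_k := \{x \in B_R : d(x, \partial B_R) \in [2^k \delta_0,\, 2^{k+1}\delta_0)\}$ of volume $\le CR^{n-1} 2^k \delta_0$, and estimate $\int_{\mathcal C B_R} J(x-y)\, dy \lesssim (2^k\delta_0)^{-\theta}$ for $x \in A_k$ by dyadic summation of (\ref{JDecayr}). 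Combining with $|v - u| \le 2$ gives a contribution $\le CR^{n-1}(2^k\delta_0)^{1-\theta}$ per shell, and the condition $\theta > 2$ ensures that the aggregate $\sum_k (2^k\delta_0)^{1-\theta}$ converges with enough margin to absorb the refinements needed when invoking the uniform asymptotics of $u$ on $B_R^c$ and when handling the symmetric $B_R^c \times A_k$ contribution (where an additional dyadic factor in the transverse direction appears). The main obstacle is precisely this dyadic balancing: matching the shell volume growth $\sim R^{n-1} 2^k\delta_0$ against the kernel decay $(2^k\delta_0)^{-\theta}$ so that the total interior-exterior nonlocal interaction stays at the target surface order $R^{n-1}$, which is exactly why the decay exponent $\theta > 2$ enters the hypothesis.
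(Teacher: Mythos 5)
Your route is genuinely different from the paper's, and it contains one gap that is not cosmetic. Step 1 --- that a bounded solution of (\ref{main}) which is monotone in $x_n$ and satisfies (\ref{asymp}) is a local minimizer of $\mathcal E$ --- is the load-bearing ingredient of your argument, and you assert it rather than prove it. This is a substantial theorem in its own right: in the purely local case it is the Alberti--Ambrosio--Cabr\'e minimality principle, whose proof requires either uniform limits in $x'$ or a separate argument showing that the profiles $\lim_{x_n\to\pm\infty}u(x',x_n)$ are themselves minimizers of the lower-dimensional problem; the limits in (\ref{asymp}) are only pointwise in $x'$, so the easy foliation/sliding comparison does not close by itself. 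No version of this principle for the mixed operator $\Delta+cL$ is established in the paper or invoked from the literature, so as written Steps 2--3 prove an estimate for your competitor $v$, not for $u$. A secondary issue: the existence of the heteroclinic $g$ with $g''+f(g)=0$, $g(\pm\infty)=\pm1$ does not follow from $F(\pm1)=0$ alone (one needs a sign condition on $F$ in $(-1,1)$); this is fixable by using a truncated linear interpolation as competitor, but it should not be taken for granted.

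For comparison, the paper avoids minimality entirely. It differentiates the energy of the sliding family $u^t(x)=u(x',x_n+t)$ directly, uses the equation to reduce $\partial_t\mathcal E(u^t,B_R)$ to a boundary flux term plus the interior--exterior nonlocal cross term, and integrates in $t$ from $0$ to $\infty$ using $\mathcal E(1,B_R)=0$ and $\int_0^\infty\partial_tu^t\,dt=1-u\le 2$. This yields
\begin{equation*}
\mathcal E(u,B_R)\ \le\ CR^{n-1}+C\iint_{(\mathbb R^n\setminus B_R)\times B_R}|x-y|\,J(x-y)\,dy\,dx,
\end{equation*}
and the remaining integral is handled by a three-piece decomposition of $(\mathbb R^n\setminus B_R)\times B_R$. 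Note that in this formulation the weight on far pairs is $|x-y|$ (coming from $|u^t(x)-u^t(y)|\le C|x-y|$), so the deep-interior/far-exterior piece forces $\int_0^R(R+\delta_0-r)^{1-\theta}\,dr<\infty$, which is where $\theta>2$ genuinely enters. In your dyadic-shell estimate you bound the far pairs by $|v(x)-v(y)|^2\le 4$ instead, for which $\theta>1$ already suffices; so your stated explanation of why $\theta>2$ is required does not actually reflect where the constraint bites. If you repair Step 1 (e.g.\ by proving the minimality principle under a uniform version of (\ref{asymp}), or by switching to the paper's direct sliding computation), the rest of your estimates in Steps 2--3 are essentially sound, provided the $B_R\times B_R$ part is reduced to a one-dimensional seminorm of $g$ by integrating the kernel over the transverse variables first, as you indicate.
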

In the following theorem, we provide a counterpart of the above estimate when the jump kernel $J$ is the fractional Laplacian or,  in the more general context,  it satisfies the ellipticity condition. The following energy estimate is consistent with the fractional Laplacian operator for any $0<\alpha<2$.  In this regard, we  refer interested readers to \cite{cc,cc2,fs,fgu,cp,psv} and references therein. 

\begin{thm}\label{thmeneal}
Suppose that $ u$ is a bounded monotone solution of (\ref{main}) with  $ F(\pm 1)=0$.  Assume also that the kernel $J$ satisfies  (\ref{Jumpc}).   Then,  the following energy estimates hold for $R>1$. 
\begin{enumerate}
\item[(i)] If $0<\alpha<1$, then $\mathcal E(u,B_R)  \le  C R^{n-\alpha}$,
\item[(ii)] If  $\alpha=1$, then $\mathcal E(u,B_R) \le  C R^{n-1}\log R$,
\item[(iii)] If $\alpha>1$, then $\mathcal E(u,B_R)  \le  C R^{n-1}$,
\end{enumerate} 
where the positive constant $C$ is independent from $R$ but may depend on $\alpha, n,\lambda,\Lambda$. 
\end{thm}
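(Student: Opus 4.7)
The overall strategy is to show that a bounded monotone solution $u$ of \eqref{main} with limits $\pm 1$ at $\pm\infty$ and $F(\pm 1)=0$ is a local minimizer of $\mathcal{E}$, and then to construct an explicit competitor whose energy realizes the claimed bounds. Minimality of monotone solutions in the class of functions coinciding with $u$ outside a bounded set is classical for the Laplacian (Alberti--Ambrosio--Cabr\'{e}) and for the fractional Laplacian (Cabr\'{e}--Cinti, Savin--Valdinoci); the argument carries over to the mixed operator $\Delta+cL$ by a sliding argument using the translations $u_t(x)=u(x+te_n)$ together with the strong maximum principle for $\Delta+cL$. Once minimality is in place, one has $\mathcal{E}(u,B_R)\le \mathcal{E}(v,B_R)$ for any admissible competitor $v$ that agrees with $u$ outside $B_R$, and the task reduces to exhibiting such a $v$ with the right energy.

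I would take as competitor
\begin{equation*}
v(x)=\min\{u(x),\psi_R(x)\},
\end{equation*}
where $\psi_R\equiv 1$ on $B_{R-1}$, decreases linearly to the value $\|u\|_{L^\infty}$ on the annulus $B_R\setminus B_{R-1}$, and equals $u(x)$ for $|x|\ge R$. Since $u$ is bounded by $1$, $v$ differs from $u$ only inside $B_R$. The Dirichlet part $\tfrac{1}{2}\int_{B_R}|\nabla v|^2$ is concentrated in the annulus $B_R\setminus B_{R-1}$ and contributes $O(R^{n-1})$. The potential part $\int_{B_R} F(v)\,dx$ is similarly $O(R^{n-1})$ since $F(\pm 1)=0$ and $F$ is bounded on $[-1,1]$, so only the annulus contributes.

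The heart of the proof is the nonlocal seminorm
\begin{equation*}
\iint_{\mathbb{R}^{2n}\setminus \mathcal{C}B_R\times \mathcal{C}B_R}|v(x)-v(y)|^2 J(x-y)\,dy\,dx.
\end{equation*}
I would split this integral along the symmetric decomposition $\cup_{k=1}^6 \Gamma_R^k$ of Definition~\ref{dfnsym}. The contribution from $\Gamma_R^5=B_R\times B_R$ splits further via $B_{R-1}$ versus the annulus: the $B_{R-1}\times B_{R-1}$ piece vanishes because $v\equiv 1$ there, and each piece meeting the annulus is $O(R^{n-1})$ by the integrability of $|x-y|^{-n-\alpha}$ on a strip of width $1$. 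The interaction pieces $\Gamma_R^1$ and $\Gamma_R^2$ are handled analogously. The genuinely delicate term is $\Gamma_R^4=B_{R-1}\times(\mathbb{R}^n\setminus B_R)$ (and by symmetry $\Gamma_R^3$), which under the ellipticity condition \eqref{Jumpc} is bounded by
\begin{equation*}
C\int_{B_{R-1}}\left(\int_{|y|>R}\frac{dy}{|x-y|^{n+\alpha}}\right)dx \;\le\; C\int_{B_{R-1}}(R-|x|)^{-\alpha}\,dx \;\le\; C\int_0^{R-1} r^{n-1}(R-r)^{-\alpha}\,dr,
\end{equation*}
and this last integral is exactly $O(R^{n-\alpha})$ for $\alpha<1$, $O(R^{n-1}\log R)$ for $\alpha=1$, and $O(R^{n-1})$ for $\alpha>1$. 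Combining with the $O(R^{n-1})$ contributions from the local and potential parts, this proves all three cases simultaneously.

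The main obstacle I anticipate is not the competitor computation itself but rigorously establishing the minimality of monotone solutions for the combined operator $\Delta+cL$: one needs a translation-based sliding argument together with a comparison principle for $\Delta+cL$ on bounded domains under kernels satisfying \eqref{Jumpc}, and one must verify that the value $\mathcal{E}(v,B_R)$ for any admissible competitor is well defined and finite despite the slow decay of $J$. A secondary point is the bookkeeping between $\Gamma_R^3$ and $\Gamma_R^4$: the symmetry of $J$ is essential here to avoid double-counting, and the logarithmic loss at $\alpha=1$ enters precisely from the radial integral above, so the choice of a width-one transition annulus should be made carefully to ensure the $\log R$ factor is not amplified.
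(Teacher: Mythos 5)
Your proposal reaches the right final integrals and the correct trichotomy in $\alpha$, but it takes a genuinely different route from the paper, and the route has one structural detour and one concrete slip worth flagging. The paper never establishes (or needs) minimality: its proof of Theorem~\ref{thmeneal} follows the proof of Theorem~\ref{thmene}, i.e.\ the Ambrosio--Cabr\'e sliding computation applied \emph{directly} to the energy. One writes $\mathcal E(u,B_R)=\mathcal E(u^\infty,B_R)-\int_0^\infty \partial_t\mathcal E(u^t,B_R)\,dt$ for the translates $u^t(x)=u(x',x_n+t)$, uses the equation and Lemma~\ref{lemtech} to reduce $\partial_t\mathcal E(u^t,B_R)$ to a boundary term plus the cross term $\iint_{(\mathbb R^n\setminus B_R)\times B_R}[u^t(x)-u^t(y)]\,\partial_t u^t(x)J(x-y)$, and then integrates in $t$ using monotonicity and $F(\pm1)=0$. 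The bound $|u^t(x)-u^t(y)|\le C\min\{1,|x-y|\}$ and the decomposition of $(\mathbb R^n\setminus B_R)\times B_R$ then produce exactly your radial integral $\int_0^R r^{n-1}(R+\delta_0-r)^{-\alpha}\,dr$ with the same $R^{n-\alpha}$ / $R^{n-1}\log R$ / $R^{n-1}$ cases. Your plan instead first proves that $u$ is a local minimizer and then tests against a competitor. That is a legitimate strategy (it is the Cabr\'e--Cinti / Palatucci--Savin--Valdinoci route and has the advantage of applying to minimizers that are not translates of a monotone profile), but for this theorem it is strictly more work: the minimality lemma for $\Delta+cL$ is itself proved by the very same sliding-plus-comparison machinery, so you would end up carrying out the paper's computation anyway as a preliminary step, and that lemma is the one part of your outline you do not actually supply.

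Two concrete corrections. First, your competitor is mis-specified: if $\psi_R\equiv 1$ on $B_{R-1}$ and $|u|\le 1$, then $\min\{u,\psi_R\}=u$ on $B_{R-1}$, which defeats the purpose, and a $\psi_R$ that ``decreases to $\|u\|_{L^\infty}$'' is not decreasing. The standard choice is $v=\max\{u,\phi_R\}$ with $\phi_R\equiv 1$ on $B_{R-1}$, decreasing linearly across $B_R\setminus B_{R-1}$ to a value $\le\inf u$, so that $v\equiv 1$ on $B_{R-1}$ and $v\equiv u$ outside $B_R$; with that fix your annulus and $\Gamma^4_R$ estimates go through as written. Second, for the annulus interaction terms you need $|v(x)-v(y)|^2\le C\min\{1,|x-y|^2\}$ (Lipschitz control of $v$, hence of $u$) to make $J$ integrable near the diagonal; ``integrability of $|x-y|^{-n-\alpha}$ on a strip'' alone is false for $\alpha\ge 0$ without the $|x-y|^2$ gain. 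With these repairs the competitor computation is correct and matches the paper's rates.
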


It is straightforward to generalize our main results in this section to the case when the jumping measure is of the form 
\begin{equation}
J(x,y) = \frac{1}{|x-y|^n \phi(|x-y|)}. 
\end{equation} 
Here,  $\phi$ is a positive increasing function in $\mathbb R^+$ satisfying certain conditions, see \cite{ck,cksv4} and references therein in the context of stochastic processes. 

One of the main difficulties in proving properties for operator $-T=\Delta + c L$ is that it shows 
different scales for the local and nonlocal parts. The diffusion part has Brownian scaling while the
jump part has a different type of scaling depending on the kernel. For the rest of this section,  we provide statements of an Hamiltonian identity and a Modica-type estimate  in one dimension and a monotonicity formula  for radial solutions for the case of $L=\Delta^{\frac{\alpha}{2}}$. Consider the fractional Laplacian operator 
\begin{equation}\label{g}
 (- \Delta)^{\frac{\alpha}{2}} u = g(x) \ \ \text{in}\ \ \mathbb{R}^n, 
\end{equation} 
when $g\in C^{2,\gamma}(\mathbb{R}^n)$ for $\gamma > \max(0,1-\alpha)$.  As it was mentioned in previous sections,  the fractional laplacian can be realized as the boundary operator (more precisely the Dirichlet-to-Neumann operator) of a suitable extension in the half-space, see \cite{cas}. In the light of this,  Caffarelli and Silvestre introduced an extension function $v(x,y)$  of the solution  $u(x)$ of (\ref{g}) that  satisfies 
 \begin{eqnarray}\label{eg}
 \left\{ \begin{array}{lcl}
\hfill \div(y^{a} \nabla v)&=& 0   \ \ \text{in}\ \ \mathbb{R}_+^{n+1}=\left \{x \in \mathbb R^n, y>0 \right \},\\   
\hfill -\lim_{y\to0}y^{a} \partial_{y} v &=& d_{\alpha} g(x)   \ \ \text{in}\ \ \partial\mathbb{R}_+^{n+1},
\end{array}\right.
  \end{eqnarray}
when $a=1-\alpha$ and $d_\alpha$ is a constant.  We use the above extension problem to establish  a Hamiltonian identity in one dimension, following ideas and methods established by Cabr\'e and  Sire
 in \cite{cabSire1,cabSire2} for the fractional Laplacian operator.  

\begin{thm}\label{hamilton}
Let  $n=1$ and $v(x,y)$ be the extension function of solution $u(x)$ of \eqref{main} satisfying 
\begin{equation}\label{asympve}
\lim_{x\to\infty } v({x}, 0)\to  \tau \in \mathbb R .
\end{equation}
 Then, the following identity holds for any $x\in \mathbb R$
\begin{equation}\label{hamiltonian}
d_\alpha  (\partial_x v(x,0))^2+c \int_0^\infty y^{a} \left[  (\partial_x v)^2 - (\partial_y v)^2 \right] dy =2d_\alpha \left[  F( v(x,0)) - F( \tau) \right]. 
\end{equation}
\end{thm}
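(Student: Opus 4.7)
The plan is to show that the left-hand side of (\ref{hamiltonian}), call it $H(x)$, satisfies $H'(x) = 2d_\alpha \frac{d}{dx}F(v(x,0))$, and then to fix the constant of integration using the asymptotic condition (\ref{asympve}). Writing
$$H(x) := d_\alpha (\partial_x v(x,0))^2 + c \int_0^\infty y^{a}\bigl[(\partial_x v)^2 - (\partial_y v)^2\bigr]\, dy,$$
I would differentiate under the integral to get
$$H'(x) = 2d_\alpha\, \partial_x v(x,0)\, \partial_{xx} v(x,0) + 2c \int_0^\infty y^{a}\bigl[\partial_x v\, \partial_{xx} v - \partial_y v\, \partial_{xy} v\bigr]\, dy.$$

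The key manipulation is to integrate the $\partial_y v\,\partial_{xy} v$ term by parts in $y$:
$$\int_0^\infty y^{a} \partial_y v\, \partial_{xy} v\, dy = \bigl[y^{a} \partial_y v\, \partial_x v\bigr]_0^\infty - \int_0^\infty \partial_y(y^{a}\partial_y v)\, \partial_x v\, dy.$$
Using the extension equation $\partial_y(y^{a}\partial_y v) = -y^{a}\partial_{xx} v$ (valid because $n=1$), the two bulk integrals in $H'(x)$ cancel exactly, and only boundary terms in $y$ survive. The term at $y=\infty$ vanishes under the decay of $\nabla v$ provided by the Caffarelli-Sire framework, while at $y=0$ the Neumann condition in (\ref{eg}) gives $\lim_{y\to 0^+} y^{a}\partial_y v = -d_\alpha g(x)$, so
$$H'(x) = 2d_\alpha\, \partial_x v(x,0)\,\bigl[u''(x) - c\, g(x)\bigr].$$

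Now I would use the equation (\ref{main}) in one dimension: since $g = (-\Delta)^{\alpha/2}u = -\Delta^{\alpha/2}u$ and $u'' + c\Delta^{\alpha/2} u + f(u) = 0$, we have $c\,g(x) = u''(x) + f(u(x))$, so $u''(x) - c\, g(x) = -f(u(x))$. Because $F' = -f$ and $\partial_x v(x,0) = u'(x)$, this produces
$$H'(x) = -2d_\alpha u'(x) f(u(x)) = 2d_\alpha \frac{d}{dx} F(u(x)).$$
Integrating yields $H(x) = 2d_\alpha F(v(x,0)) + C$ for some constant $C$. To identify $C$, I would send $x\to\infty$: by (\ref{asympve}), $v(x,0)\to \tau$; by elliptic regularity for the degenerate operator $\mathrm{div}(y^{a}\nabla v) = 0$ with bounded Neumann datum on $\partial\mathbb{R}^{n+1}_+$, $\partial_x v(x,\cdot)$ decays uniformly with the weight $y^{a}$, so both $d_\alpha (\partial_x v(x,0))^2$ and $\int_0^\infty y^{a}[(\partial_x v)^2 - (\partial_y v)^2]\, dy$ tend to $0$ as $x\to\infty$. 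Hence $H(x)\to 0$ and $C = -2d_\alpha F(\tau)$, which is precisely (\ref{hamiltonian}).

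The main obstacle is the justification of the two limiting/boundary contributions: the vanishing of $[y^{a}\partial_y v\, \partial_x v]$ as $y\to\infty$ during the integration by parts, and the vanishing of the weighted $L^2$-integral of $\nabla v(x,\cdot)$ as $x\to\infty$. Both are not algebraic identities but require the quantitative decay estimates available for Caffarelli-Sire extensions of bounded one-dimensional solutions, together with interior Schauder estimates for the $A_2$-weighted operator $\mathrm{div}(y^{a}\nabla\cdot)$; everything else in the argument is a short computation with integration by parts and the chain rule.
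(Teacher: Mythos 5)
Your proposal is correct and follows essentially the same route as the paper: differentiate the Hamiltonian in $x$, integrate by parts in $y$ using $\partial_y(y^a\partial_y v)=-y^a\partial_{xx}v$, pick up the Neumann datum at $y=0$, substitute the equation to get $H'(x)=2d_\alpha\frac{d}{dx}F(v(x,0))$, and fix the constant by sending $x\to\infty$. The only (cosmetic) difference is that the paper differentiates just the weighted integral $w(x)$ and absorbs the $(\partial_x v(x,0))^2$ term through the boundary condition, whereas you differentiate the full left-hand side at once; your remarks on justifying the vanishing of the boundary term at $y=\infty$ and of the gradient terms as $x\to\infty$ address points the paper passes over silently.
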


Note that if $\lim_{x\to\pm\infty } v({x}, 0)\to  \tau^{\pm}$  where $\tau^{\pm}$ are constant, 
then $F(\tau^+)=F(\tau^-)$. We now provide a Modica-type pointwise estimate in one dimension. 

\begin{thm}\label{modica}
Let  $n=1$ and $v(x,y)$ be the extension function of solution $u(x)$ of \eqref{main} satisfying monotonicity condition $v_x(x,0)>0$ and (\ref{asympve}).  Then, the following pointwise inequality holds  for any $(x,y)\in \mathbb R\times  \mathbb R^+$
\begin{equation}\label{modicaest}
d_\alpha  (\partial_x v(x,0))^2+c \int_0^y  t^{a} \left[  (\partial_x v(x,t))^2 - (\partial_y v(x,t))^2 \right] dt <2d_\alpha \left[  F( v(x,0)) - F( \tau) \right]. 
\end{equation}
\end{thm}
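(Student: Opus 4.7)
The plan is to deduce the strict pointwise estimate (\ref{modicaest}) from the Hamiltonian identity of Theorem \ref{hamilton} by establishing strict positivity of the tail of the $y$-integration. Subtracting the left-hand side of (\ref{modicaest}) from the identity (\ref{hamiltonian}), the Modica estimate becomes equivalent to showing
\begin{equation*}
c \int_y^\infty t^a \bigl[(\partial_x v(x,t))^2 - (\partial_y v(x,t))^2\bigr]\, dt > 0, \quad (x, y) \in \mathbb R \times \mathbb R^+.
\end{equation*}
Thus (\ref{modicaest}) is the strict version of (\ref{hamiltonian}) obtained after truncating the integration at a finite level.

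Mimicking the derivation of the Hamiltonian identity in Theorem \ref{hamilton}, but truncating at finite $T > 0$, I would multiply the extension equation $\div(y^a \nabla v) = 0$ by $\partial_x v$, integrate over $y \in (0, T)$, and integrate by parts using the boundary condition in (\ref{eg}) to derive the auxiliary identity
\begin{equation*}
\partial_x \Psi(x, T) = -c T^a v_x(x, T) v_y(x, T),
\end{equation*}
where
\begin{equation*}
\Psi(x, T) := \frac{c}{2} \int_0^T t^a \bigl[(v_x)^2 - (v_y)^2\bigr]\, dt + \frac{d_\alpha}{2} u_x(x)^2 - d_\alpha \bigl[F(u(x)) - F(\tau)\bigr].
\end{equation*}
Letting $T \to \infty$ recovers the conservation law $\partial_x \Psi(x,\infty) \equiv 0$ of Theorem \ref{hamilton}, and the asymptotic (\ref{asympve}) together with standard decay of $v_x, v_y$ at infinity forces $\Psi(x, \infty) \equiv 0$. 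Consequently, (\ref{modicaest}) amounts to $\Psi(x, T) < 0$ for every $T < \infty$.

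Next I would record the strict positivity of $v_x$ in the extension: since $v_x$ solves $\div(y^a \nabla v_x) = 0$ in $\mathbb R^2_+$ with boundary datum $v_x(x, 0) > 0$, the strong maximum principle for the degenerate elliptic operator with $A_2$-Muckenhoupt weight $y^a$ yields $v_x > 0$ throughout $\mathbb R^2_+$. Then, integrating $\partial_x \Psi$ in $x$ from $x$ to $+\infty$ and using $\Psi(+\infty, T) = 0$, one gets
\begin{equation*}
\Psi(x, T) = c T^a \int_x^\infty v_x(s, T) v_y(s, T)\, ds,
\end{equation*}
so (\ref{modicaest}) is equivalent to $\int_x^\infty v_x v_y\, ds < 0$. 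A Hopf-type strong maximum principle, implemented via a sliding argument \`{a} la Cabr\'{e}--Sire \cite{cabSire1, cabSire2} comparing $v(x, y)$ with translates $v(x + \lambda, y)$ for $\lambda > 0$, would then upgrade the non-strict inequality $\Psi \le 0$ (which follows directly from the Hamiltonian equality at $T = \infty$) to the strict one $\Psi < 0$.

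The main obstacle will be this last step. Since $v_y$ is not of constant sign in general, the integrand $v_x v_y$ is not pointwise signed, and the strict sign of its integral is not automatic. The argument must exploit the rigid layer structure --- the positivity of $v_x$, the asymptotic behavior at $x = \pm\infty$, and the strong maximum principle for the weighted degenerate elliptic equation --- to rule out the equality case, which would otherwise force the solution $u$ to be trivial.
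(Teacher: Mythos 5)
Your setup coincides with the paper's: your $\Psi(x,T)$ is (up to the constants $c,d_\alpha$) the negative of the paper's auxiliary function $w(x,y)$, and your identity $\partial_x\Psi(x,T)=-cT^a v_x(x,T)v_y(x,T)$ is exactly the paper's computation of $\partial_x w$. But there is a genuine gap at the decisive step. First, your claim that the non-strict inequality $\Psi(x,T)\le 0$ ``follows directly from the Hamiltonian equality at $T=\infty$'' is false: the Hamiltonian identity only says $\Psi(x,\infty)=0$, and passing from $T=\infty$ to finite $T$ requires knowing the sign of the tail $\int_T^\infty t^a\left[(v_x)^2-(v_y)^2\right]dt$ --- which is precisely the content of the theorem, not something available in advance. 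Second, your reduction to showing $\int_x^\infty v_x(s,T)v_y(s,T)\,ds<0$ is a dead end as stated: you correctly observe that the integrand has no sign, and the ``Hopf-type sliding argument'' you invoke is never specified; a sliding comparison of $v$ with its translates $v(\cdot+\lambda,\cdot)$ gives monotonicity ($v_x>0$ in the half-plane), which you already have, but does not by itself produce a sign for $\int_x^\infty v_xv_y$.

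The missing idea is the one the paper (following Cabr\'e--Sire) actually uses: besides $\partial_x w=y^a v_xv_y$, one also computes $\partial_y w=-\tfrac{y^a}{2}\left[(v_x)^2-(v_y)^2\right]$ and then checks the two identities
\begin{equation*}
\div\left(y^{a}\nabla w\right)=-a\,y^{2a-1}(\partial_x v)^2,\qquad \div\left(y^{-a}\nabla w\right)=-a\,y^{-1}(\partial_y v)^2 .
\end{equation*}
Depending on the sign of $a=1-\alpha$, one of these makes $w$ a supersolution of a degenerate elliptic equation with an $A_2$ weight in the half-plane; the strict inequality $w>0$ then follows from a maximum-principle contradiction argument (ruling out an interior negative infimum, a boundary infimum via a Hopf lemma and the Neumann condition, and an infimum ``at infinity'' via translation and compactness). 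This two-weight divergence structure is the engine of the proof and is entirely absent from your proposal; without it, the argument does not close.
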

 Note that for pure local problem $\Delta u=f(u)$ in $\mathbb R^n$, Modica  in \cite{mod} established the celebrated inequality $|\nabla u|^2 \le 2 F(u)$ in $\mathbb R^n$ when $u$ is bounded and $F\ge 0$. Naturally, one may ask if this result for the local problem could be used to prove a counterpart of (\ref{modicaest}) in higher dimensions $n\ge 2$. This remains as an open problem. We end this section with the following monotonicity formula for radial solutions. 

\begin{thm}\label{iden}
Let $n\ge 1$ and $ v=v(|x|,y)$ be a bounded radial extension function for solution $u(x)$ of \eqref{main}. Then, the following function $I(r)$ is nonincreasing in terms of $r$,
  \begin{equation}\label{radialmono}
I(r):=d_\alpha  (\partial_r v(r,0))^2+c \int_0^\infty y^{a} \left[  (\partial_r v)^2 - (\partial_y v)^2 \right] dy - 2d_\alpha  F( v(r,0) ) \ \ \text{for} \ \ r\in\mathbb R^+. 
  \end{equation}
\end{thm}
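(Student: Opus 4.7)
The plan is to show $I'(r)\le 0$ by direct differentiation, using the extension equation $\div(y^a\nabla v)=0$ from \eqref{eg} together with its Dirichlet-to-Neumann boundary condition. Differentiating the three summands of $I(r)$ and using $F'=-f$ yields
\[
I'(r)=2d_\alpha\, v_r(r,0)v_{rr}(r,0)+2c\int_0^\infty y^{a}\bigl[v_r v_{rr}-v_y v_{ry}\bigr]dy+2d_\alpha f(v(r,0))v_r(r,0).
\]

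Next I would exploit the extension equation. For radial $v=v(r,y)$ it reads
\[
y^{a}\Bigl(v_{rr}+\tfrac{n-1}{r}v_r\Bigr)+(y^{a}v_y)_y=0.
\]
Multiplying by $v_r$, integrating over $y\in(0,\infty)$ and integrating by parts in $y$ in the last term generates a boundary contribution at $y=0$ of the form $-d_\alpha v_r(r,0)g(r)$, where $-\lim_{y\to 0}y^{a}v_y=d_\alpha g(r)=d_\alpha(-\Delta)^{\alpha/2}u(r)$, while the contribution at $y=\infty$ vanishes by the decay of $y^{a}v_y$. Since $u$ solves \eqref{main} with $L=-(-\Delta)^{\alpha/2}$, we have $g(r)=\tfrac{1}{c}\bigl[\Delta_x v(r,0)+f(v(r,0))\bigr]$ and $\Delta_x v(r,0)=v_{rr}(r,0)+\tfrac{n-1}{r}v_r(r,0)$. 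Rearranging gives the key identity
\[
2c\int_0^\infty y^{a}\bigl[v_r v_{rr}-v_y v_{ry}\bigr]dy=-2d_\alpha v_r(r,0)\bigl[\Delta_x v(r,0)+f(v(r,0))\bigr]-\frac{2c(n-1)}{r}\int_0^\infty y^{a}(v_r)^2\,dy.
\]

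Substituting this into the expression for $I'(r)$, the $f(v(r,0))v_r(r,0)$ contributions cancel, the $v_{rr}(r,0)$ contributions cancel, and one is left with
\[
I'(r)=-\frac{2(n-1)}{r}\Bigl[d_\alpha(v_r(r,0))^2+c\int_0^\infty y^{a}(v_r)^2\,dy\Bigr],
\]
which is nonpositive for every $n\ge 1$, yielding the claimed monotonicity. When $n=1$ the right-hand side vanishes identically, so $I$ is constant, consistent with the Hamiltonian identity of Theorem \ref{hamilton} and energy conservation in one dimension.

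The main obstacle will be the rigorous justification of (i) differentiation under the integral sign for the $y$-integral defining $I(r)$, and (ii) the vanishing of $y^{a}v_y$ as $y\to\infty$ needed in the integration by parts. For bounded $u$ both are standard, following from the Poisson kernel representation of the extension and the estimates established by Caffarelli and Silvestre in \cite{cas} and by Cabr\'{e}--Sire in \cite{cabSire1,cabSire2}. The remaining work is algebraic bookkeeping of the signs and the constant $d_\alpha$, carefully matching the Caffarelli-Silvestre convention with the sign $L=-(-\Delta)^{\alpha/2}$ adopted in \eqref{main}.
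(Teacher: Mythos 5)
Your proposal is correct and follows essentially the same route as the paper: differentiate in $r$, substitute the radial extension equation $\div(y^a\nabla v)=0$, integrate by parts in $y$, convert the boundary term via the Dirichlet-to-Neumann condition and the equation for $u$, and observe that after the cancellations only the nonpositive term $-\tfrac{2(n-1)}{r}\bigl[d_\alpha (v_r(r,0))^2+c\int_0^\infty y^a (v_r)^2\,dy\bigr]$ survives. Your bookkeeping of the constant $c$ and the factor $2$ is in fact more careful than the paper's, and your remark that $I$ is constant when $n=1$ correctly recovers the Hamiltonian identity of Theorem \ref{hamilton}.
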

For nonradial solutions, a (weak) monotonicity formula is provided in Section \ref{secham}. Pointwise estimates and monotonicity formulae are fundamental tools in proving rigidity results in this context.

\section{De Giorgi type Results; Proofs of Theorem \ref{thmlione}-\ref{Thmain2}}\label{secthm1}

We start this section with showing that monotone solutions are stable solutions that is inequality (\ref{stability}) holds. We refer to this inequality as stability inequality.  
\begin{prop}\label{Propstab}  
  Let $u$ be a monotone solution of (\ref{main}).  Then,  for any $\zeta\in C_c^1(\mathbb R^n)$, 
\begin{equation} \label{stability2}
\int_{\RR^n}  f'(u(x)) \zeta^2(x) dx \le \int_{\mathbb R^n} |\nabla \zeta(x)|^2 dx+ \frac{c}{2} \iint_{\RR^{2n}}   [\zeta(x)- \zeta(y)]^2 J(x-y) dx dy . 
\end{equation} 
\end{prop}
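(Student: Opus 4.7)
The plan is to differentiate \eqref{main} in the monotonicity direction and test the resulting linearized equation against $\zeta^2/v$, handling the local part by a Cauchy--Schwarz trick and the nonlocal part by symmetrization plus an AM--GM inequality. After rotating coordinates we may assume $v:=\partial_{x_n}u>0$ on $\mathbb R^n$. Since $J(x-y)$ is translation invariant, both $\Delta$ and $L$ commute with $\partial_{x_n}$, so differentiating \eqref{main} yields
\begin{equation*}
-\Delta v - c L[v] = f'(u)\, v \quad \text{in } \mathbb R^n.
\end{equation*}
Multiplying by $\zeta^2/v$ (rigorously, by $\zeta^2/(v+\epsilon)$ and sending $\epsilon\downarrow 0$) and integrating yields
\begin{equation*}
\int_{\mathbb R^n} f'(u)\,\zeta^2\,dx = -\int_{\mathbb R^n} \frac{\zeta^2}{v}\,\Delta v\,dx - c\int_{\mathbb R^n} \frac{\zeta^2}{v}\,L[v]\,dx.
\end{equation*}

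For the local term I would integrate by parts and expand $\nabla(\zeta^2/v)=2\zeta\nabla\zeta/v-\zeta^2\nabla v/v^2$, then apply $2ab\le a^2+b^2$ with $a=|\nabla\zeta|$, $b=\zeta|\nabla v|/v$ to absorb the $\zeta^2|\nabla v|^2/v^2$ term; the result is the upper bound $\int|\nabla\zeta|^2\,dx$.

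The nonlocal term is where the main work lies. Using $L[v](x)=\int(v(y)-v(x))J(x-y)\,dy$, Fubini, and the symmetry $J(x-y)=J(y-x)$, the standard swap--and--average manipulation gives
\begin{equation*}
-c\int \frac{\zeta^2(x)}{v(x)}\,L[v](x)\,dx = \frac{c}{2}\iint \left(\frac{\zeta^2(x)}{v(x)}-\frac{\zeta^2(y)}{v(y)}\right)(v(x)-v(y))\,J(x-y)\,dy\,dx.
\end{equation*}
The key pointwise inequality is
\begin{equation*}
\left(\frac{\zeta^2(x)}{v(x)}-\frac{\zeta^2(y)}{v(y)}\right)(v(x)-v(y)) \le (\zeta(x)-\zeta(y))^2,
\end{equation*}
which, upon expansion, reduces to
\begin{equation*}
\frac{\zeta^2(x)\,v(y)}{v(x)}+\frac{\zeta^2(y)\,v(x)}{v(y)} \ge 2\,\zeta(x)\zeta(y),
\end{equation*}
an immediate consequence of AM--GM since $v>0$.

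Adding the two bounds produces \eqref{stability2}. I expect the AM--GM step for the nonlocal term to be the heart of the matter; the remaining issues (regularization $v+\epsilon$ to handle possible smallness of $v$, vanishing of boundary terms from the compact support of $\zeta$, and integrability of the double integrals) are routine given the elliptic regularity of monotone solutions of \eqref{main} and the assumed structure of the kernel $J$.
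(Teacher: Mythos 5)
Your proposal is correct and follows essentially the same route as the paper: differentiate the equation in the monotone direction, test the linearized equation against $\zeta^2/v$, handle the gradient term by completing the square, and symmetrize the nonlocal term to reduce everything to the pointwise inequality $\bigl(\tfrac{\zeta^2(x)}{v(x)}-\tfrac{\zeta^2(y)}{v(y)}\bigr)(v(x)-v(y))\le(\zeta(x)-\zeta(y))^2$, which the paper states as the algebraic inequality $(a+b)\bigl[\tfrac{c^2}{a}+\tfrac{d^2}{b}\bigr]\le(c-d)^2$ for $ab<0$ and you derive equivalently by AM--GM. The only cosmetic difference is your explicit mention of the $v+\epsilon$ regularization, which the paper omits.
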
  
In order to prove the above inequality, we first provide a technical lemma. We omit its proof since it is elementary. 
\begin{lemma}\label{lemtech}
Let $T=-\Delta -cL$ and  the jump kernel  $J$ be  measurable, symmetric and even.   Suppose  that $f,g\in C^1(\mathbb R^n)$. Then, 
\begin{equation}
\int_{\mathbb R^n} g(x)T(f(x)) dx=\int_{\mathbb R^{n}} \nabla f(x)\cdot \nabla g(x) dx+ \frac{c}{2}\iint_{\RR^{2n}}    [f(x) - f(y)]  \left[g(x)-g(y)  \right] J(x-y) dx dy, 
\end{equation}
where the right-hand side of the above is $ \mathcal I(f,g) $. 
\end{lemma}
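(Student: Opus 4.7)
The plan is to prove Lemma \ref{lemtech} by splitting $T=-\Delta-cL$ into its local and nonlocal parts and treating each separately. For the local piece, classical integration by parts yields
\[
\int_{\mathbb R^n} g(x)(-\Delta f(x))\,dx = \int_{\mathbb R^n} \nabla f(x)\cdot\nabla g(x)\,dx,
\]
provided one of $f,g$ has compact support (or both decay fast enough) so that the boundary term at infinity vanishes. This will be the routine half of the argument.

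The nonlocal piece is where the real content sits. Writing out the principal value defining $L$ and formally applying Fubini on the set $\{|x-y|>\epsilon\}$ gives
\[
-c\int_{\mathbb R^n} g(x) L[f(x)]\,dx = -c\lim_{\epsilon\to 0}\iint_{|x-y|>\epsilon} g(x)\bigl[f(y)-f(x)\bigr] J(x-y)\,dy\,dx.
\]
Since $J$ is even, i.e.\ $J(x-y)=J(y-x)$, interchanging the names of $x$ and $y$ in this integral transforms it into $-c\iint g(y)[f(x)-f(y)]J(x-y)\,dy\,dx$. Averaging the two expressions and using $[f(x)-f(y)][g(x)-g(y)] = g(x)[f(x)-f(y)] + g(y)[f(y)-f(x)]$ produces
\[
-c\int_{\mathbb R^n} g\,L[f]\,dx = \frac{c}{2}\lim_{\epsilon\to 0}\iint_{|x-y|>\epsilon} [f(x)-f(y)][g(x)-g(y)] J(x-y)\,dx\,dy,
\]
which combines with the Laplacian identity to give the claim.

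The main obstacle is justifying these Fubini and limit steps, since for kernels of fractional-Laplacian type with $\alpha\ge 1$ the unsymmetrized integrand $g(x)[f(y)-f(x)]J(x-y)$ need not be absolutely integrable near the diagonal. The trick, and the reason the symmetrization is useful in the first place, is that after the $x\leftrightarrow y$ averaging the integrand acquires the factor $[f(x)-f(y)][g(x)-g(y)]$, which by $C^1$ regularity of $f,g$ is $O(|x-y|^2)$ near the diagonal. Combined with the standard bound $J(x-y)\le C|x-y|^{-n-\alpha}$ for $\alpha<2$ (or the even milder truncated/decay kernels considered in the paper), this gives an integrable majorant and lets one pass $\epsilon\to 0$ by dominated convergence on the symmetrized form. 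One must verify that the contribution from $\{|x-y|<\epsilon\}$ in the original one-sided integral vanishes in the limit; this follows from a Taylor expansion $f(y)-f(x) = \nabla f(x)\cdot(y-x) + O(|y-x|^2)$, together with the fact that the linear term integrates to zero over symmetric shells against the even kernel $J$.

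Finally, combining the two pieces yields exactly $\mathcal I(f,g)$ on the right-hand side, matching the formula in \eqref{mathcalI}. The argument is essentially the same bilinear-form computation that underlies the stability definition, and once the near-diagonal cancellation is handled, the rest is bookkeeping.
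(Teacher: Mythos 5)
Your argument is correct, and it is exactly the standard computation the paper has in mind: the paper omits the proof entirely (``We omit its proof since it is elementary''), so there is nothing to diverge from. Your integration by parts for the local term plus the $x\leftrightarrow y$ symmetrization of the nonlocal term, with the near-diagonal singularity controlled by the $O(|x-y|^2)$ factor in the symmetrized integrand, is the right bookkeeping; the only caveat, which the paper itself glosses over by assuming merely $f,g\in C^1(\mathbb R^n)$, is that the Taylor-remainder step you use to define the one-sided principal value for $\alpha\ge 1$ really needs a bit more regularity (e.g.\ $C^{1,\gamma}$ with $\gamma>\alpha-1$) and some decay or compact support to handle the integrals at infinity, as you already note.
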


\vspace*{.2 cm}

\noindent {\it Proof of Proposition \ref{Propstab}}.  Let $u$ denote a monotone solution of (\ref{main}). Then,  differentiating with respect to $x_n$ and calling $\phi:=\frac{\partial u}{\partial x_n}>0$ we have  
 \begin{equation} \label{L12}
T[\phi]= f'(u) \phi   \ \ \ \text{in}\ \ \mathbb R^n. 
  \end{equation} 
Now, multiply both sides with $\frac{\zeta^2}{\phi}$ for $\zeta\in C_c^1(\mathbb R^n)$ to get  
\begin{equation*} \label{}
T[\phi] \frac{\zeta^2}{\phi}=  f'(u) \zeta^2   \ \ \ \text{in}\ \ \mathbb R^n.
  \end{equation*} 
 From this and (\ref{L12}) we get 
\begin{equation} \label{LL1}
 \int_{\RR^n}  f'(u(x)) \zeta^2(x)  dx  \le  \int_{\RR^n} T [\phi(x)] \frac{\zeta^2(x)}{\phi(x)} dx . 
  \end{equation} 
Applying Lemma \ref{lemtech} for the right-hand side of the above, we have
\begin{eqnarray}\label{Lphi}
\int_{\RR^n} T[ \phi(x)] \frac{\zeta^2(x)}{\phi(x)} dx &=&  \int_{\RR^n} \nabla \phi(x) \cdot \nabla \frac{\zeta^2(x)}{\phi(x)} dx\\&&+ \nonumber
\frac{1}{2} \iint_{\RR^{2n}}    [\phi(x) - \phi(y)] \left[ \frac{\zeta^2(x)}{\phi(x)}-  \frac{\zeta^2(y)}{\phi(y)} \right] J (x-y) dx dy. 
\end{eqnarray}
Note that for $a,b,c,d\in\mathbb R$ when $ab<0$ we have 
\begin{equation*}
(a+b)\left[  \frac{c^2}{a} +  \frac{d^2}{b}  \right] \le (c-d)^2    .
\end{equation*}
Since each $\phi$ is positive,  we have $\phi(x)\phi(z)>0$. Setting  $a=\phi(x)$, $b=-\phi(y)$, $c=\zeta(x)$ and  $d=\zeta(y)$ in the above inequality and from the fact that $ab=-\phi(x)\phi(y)<0$, we  conclude 
\begin{equation}\label{phixy}
[\phi(x) - \phi(y)] \left[ \frac{\zeta^2(x)}{\phi(x)}-  \frac{\zeta^2(y)}{\phi(y)} \right] \le [\zeta(x)- \zeta(y)]^2    .
\end{equation} 
 On the other hand, for $a,b\in\mathbb R^n$ we have $ 2a\cdot b - |b|^2 \le |a|^2.$ Setting $a=\nabla \zeta$ and $b=\nabla \phi\frac{\zeta}{\phi}$ in the latter inequality,  we get
 \begin{equation}\label{zetaxy}
 2\nabla \zeta\cdot\nabla \phi \frac{\zeta}{\phi} - |\nabla \phi|^2 \frac{\zeta^2}{\phi^2}\le |\nabla \zeta|^2.
 \end{equation} 
  From (\ref{Lphi}), (\ref{phixy}) and (\ref{zetaxy}) we conclude  
\begin{equation*}
\int_{\RR^n} T[\phi(x)] \frac{\zeta^2(x)}{\phi(x)} dx\le   \int_{\mathbb R^n} |\nabla \zeta(x)|^2 dx +  \frac{1}{2} \int_{\RR^n} \int_{\RR^n}   [\zeta(x)- \zeta(y)]^2 J(y-x) dy dx.\end{equation*} 
This  and (\ref{LL1}) complete the proof.

\hfill $\Box$

We now provide proofs for our main results. 

\vspace*{.2 cm}

\noindent {\it Proof of Theorem \ref{ThmPoin}}. Let  $u$ be a stable solution of (\ref{main}). Set $\zeta(x)=|\nabla _x u(x)| \eta(x)$ in the stability inequality for $\eta \in C_c^1(\mathbb R^n)$,  
 \begin{eqnarray*} \label{Testab}
 \int_{\RR^n} f'(u) |\nabla_x u (x)|^2   \eta^2(x)  dx &\le& \int_{\mathbb R^n} |\nabla (|\nabla _x u| \eta)  |^2 dx\\&&+  \frac{c}{2}  \iint_{\RR^{2n} }  [ |\nabla_x u(x)|  \eta(x)-  |\nabla_x u(x+y)| \eta(x+y)]^2 J (y) dy dx =:I_1+I_2. 
\end{eqnarray*} 
We now compute $I_1$ and $I_2$ in the right-hand side of the above inequality
 \begin{equation*} \label{}
I_1=   \int_{\mathbb R^n} |\nabla |\nabla u| |^2 {\eta^2}+  \int_{\mathbb R^n} |\nabla \eta|^2|\nabla u|^2 +\frac{1}{2}  \int_{\mathbb{R}^n} \nabla |\nabla u|^2\cdot\nabla {\eta^2} , 
\end{equation*} 
and 
 \begin{eqnarray*} \label{}
I_2&=&  \frac{c}{2} \iint_{\RR^{2n}}  |\nabla_x u(x)|^2  \eta^2(x) J(y) dy dx + \frac{c}{2} \iint_{\RR^{2n}}   |\nabla_x u(x+y)|^2  \eta^2(x+y) J(y) dy dx
\\&& - c \iint_{\RR^{2n}}     |\nabla_x u(x)| |\nabla_x u(x+y)| \eta(x) \eta(x+y) J(y) dy dx. 
\end{eqnarray*} 
We now apply the equation (\ref{main}). Note that for any index $1\le k\le n$ we have 
\begin{eqnarray*}
T [ \partial_{x_k} u(x)] = -\Delta  \partial_{x_k} u(x) + c
\int_{\RR^n } [ \partial_{x_k} u(x) - \partial_{x_k} u(x+y)]   J(y) dy 
= f'(u) \partial_{x_k} u(x) . 
\end{eqnarray*}
Multiplying both sides of the above equation with $\partial_{x_k} u(x) \eta^2(x)$  and integrating we have
\begin{equation*}
 \int_{\mathbb R^{n}} f'(u) [\partial_{x_k} u(x)]^2  \eta^2(x)
dx =  - \int_{\mathbb R^n} \Delta  \partial_{x_k} u(x) [\partial_{x_k} u(x) \eta^2(x)] dx  +c \int_{\mathbb R^n} \partial_{x_k} u(x) \eta^2(x) L [\partial_{x_k} u(x)   ] dx . 
\end{equation*}
We now simplify the right-hand side of the above, using Lemma \ref{lemtech}, as  
\begin{eqnarray*}
&&\int_{\mathbb R^{n}} |\nabla \partial_{x_k} u|^2\eta^2 dx + \frac{1}{2} \int_{\mathbb R^{n}} \nabla |\partial_{x_k} u|^2 \cdot\nabla \eta^2  dx 
\\&&+\frac{c}{2}  \iint_{\mathbb R^{2n}}  
\left [ \partial_{x_k} u(x) \eta^2(x) -\partial_{x_k} u(x+y) \eta^2(x+y)\right ]
\left[ \partial_{x_k} u(x) -\partial_{x_k} u(x+y) \right] J(y) dx dy
\end{eqnarray*}
Substituting the above in the latter equality, we obtain  
\begin{eqnarray*}\label{}
&& \int_{\mathbb R^{n}} f'(u) |\nabla_x u(x)|^2  \eta^2(x)  dx  
=
\sum_{k=1}^n \int_{\mathbb R^{n}} |\nabla \partial_{x_k} u|^2\eta^2 dx + \frac{1}{2} \int_{\mathbb R^{n}} \nabla |\nabla u|^2 \cdot\nabla \eta^2  dx 
\\&&+   \frac{c}{2}    \iint_{\mathbb R^{2n}}   |\nabla_x u(x)|^2   \eta^2(x)  J(y) dx dy 
 +  \frac{c}{2}   \iint_{\mathbb R^{2n}}   |\nabla_x u(x+y)|^2   \eta^2(x+y) J(y) dx dy
 \\&&   -  \frac{c}{2}   \iint_{\mathbb R^{2n}}    \nabla_x u(x) \cdot \nabla_x u(x+y)   \eta^2(x)  J(y) dx dy 
    -  \frac{c}{2}   \iint_{\mathbb R^{2n}}   \nabla_x u(x) \cdot \nabla_x u(x+y)   \eta^2(x+y)  J(y) dx dy   . 
\end{eqnarray*}
From this and (\ref{Testab}), we conclude   
\begin{eqnarray*}\label{HijIden2}
 && \int_{\mathbb R^{n}}\left[ \sum_{k=1}^n  |\nabla \partial_{x_k} u|^2 -  |\nabla |\nabla  u|   |^2 \right] \eta^2 dx  +   c \iint_{\mathbb R^{2n}}     |\nabla_x u(x)| |\nabla_x u(x+y)| \eta(x) \eta(x+y) J(y) dy dx 
\\&\le&  
  \frac{c}{2}   \iint_{\mathbb R^{2n}}    \nabla_x u(x) \cdot \nabla_x u(x+y)  \left[ \eta^2(x) +\eta^2(x+y) \right]  J(y) dx dy  .
\end{eqnarray*}
We now apply recompute the left-hand side of the above inequality.    According to formula (2.1) given in  \cite{sz1,sz2}, the following geometric identity between the tangential gradients and curvatures holds. For any $w \in C^2(\Omega)$
 \begin{eqnarray*}\label{identity} \sum_{k=1}^{n} |\nabla \partial_k w|^2-|\nabla|\nabla w||^2=
\left\{
                      \begin{array}{ll}
                       |\nabla w|^2 (\sum_{l=1}^{n-1} \mathcal{\kappa}_l^2) +|\nabla_T|\nabla w||^2 & \hbox{for $x\in\{|\nabla w|>0\cap \Omega \}$,} \\
                       0 & \hbox{for $a.e.$ $x\in\{|\nabla w|=0\cap \Omega \}$,}
                                                                       \end{array}
                    \right.
                       \end{eqnarray*} 
 where $ \mathcal{\kappa}_l$ are the principal curvatures of the level set of $w$ at $x$ and $\nabla_T$ denotes the orthogonal projection of the gradient along this level set. On the other hand, it is straightforward to notice that $$ \eta(x) \eta(x+y)  =\frac{1}{2} [\eta^2(x) +\eta^2(x+y)] -  \frac{1}{2} \left[ \eta(x) -\eta(x+y) \right]^2 .$$ Applying these arguments  to the latter inequality completes the proof. 
 
\hfill $\Box$

\begin{lemma}\label{lemab}
Consider  $a,b\in\mathbb R^+$. Then, 
\begin{equation}\label{logab}
|\log b - \log a|^2 \le \frac{1}{ab} |b-a|^2. 
\end{equation}
\end{lemma}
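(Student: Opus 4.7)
The inequality is symmetric in $a,b$ up to the sign on the left, so I would assume without loss of generality that $0 < a \le b$, in which case both sides are nonnegative and it suffices to prove
\[
\log b - \log a \le \frac{b-a}{\sqrt{ab}}.
\]

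The cleanest route is to write the difference of logarithms as an integral and apply Cauchy--Schwarz. Namely,
\[
\log b - \log a = \int_a^b \frac{dt}{t} = \int_a^b \frac{1}{t}\cdot 1\, dt,
\]
so that the Cauchy--Schwarz inequality yields
\[
\bigl(\log b - \log a\bigr)^2 \le \left(\int_a^b 1\, dt\right)\left(\int_a^b \frac{dt}{t^2}\right) = (b-a)\left(\frac{1}{a}-\frac{1}{b}\right) = \frac{(b-a)^2}{ab}.
\]
Squaring already produces the claimed bound, so no case analysis on the sign is even needed if one keeps the absolute value on the left.

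This is the entire argument; there is no real obstacle, the only point requiring care is simply that $1/t^2$ is positive and integrable on $[a,b]$ for $a>0$, which is immediate. As a sanity check one can also verify the inequality by setting $s=\sqrt{b/a}\ge 1$, which reduces the claim to $2\log s \le s - 1/s$, an elementary one-variable inequality that follows from $\frac{d}{ds}(s-1/s-2\log s) = (1-1/s)^2 \ge 0$ together with equality at $s=1$. Either approach gives a one-line proof; I would present the Cauchy--Schwarz version because it is coordinate-free and makes the appearance of the factor $1/(ab)$ transparent.
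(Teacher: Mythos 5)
Your proof is correct. The paper states this lemma without providing any proof, so there is nothing to compare against; your Cauchy--Schwarz argument $\bigl(\int_a^b t^{-1}\,dt\bigr)^2 \le (b-a)\int_a^b t^{-2}\,dt = (b-a)^2/(ab)$ is a clean and complete justification (and your one-variable reduction via $s=\sqrt{b/a}$ is an equally valid alternative).
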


\vspace*{.2 cm}

\noindent {\it Proof of Theorem \ref{Thmain}}. The proof is an application of Theorem \ref{ThmPoin}.  
 We  test the  Poincar\'{e} inequality (\ref{poinsysm1})  on the following standard test function 
\begin{equation}\label{testeta}
\eta (x):=\left\{
                      \begin{array}{ll}
                        \frac{1}{2}, & \hbox{if $|x|\le\sqrt{R}$,} \\
                      \frac{ \log {R}-\log {|x|}}{{\log R}}, & \hbox{if $\sqrt{R}< |x|< R$,} \\
                       0, & \hbox{if $|x|\ge R$.}
                                                                       \end{array}
                    \right.
 \end{equation} 
From the boundedness of $|\nabla_x u|$, we conclude that $ \mathcal B_y(\nabla_x u)$ is bounded and 
\begin{eqnarray}\label{esteta}
&&\int_{  \mathbb R^{n}\cap  \{|\nabla_x u|\neq 0\}}   \left(   |\nabla u|^2 \mathcal{\kappa}^2 + | \nabla_T |\nabla u| |^2  \right)\eta^2 dx +\frac{c}{2}\iint_{  \mathbb R^{2n}\cap  \{|\nabla_x u|\neq 0\}}    \mathcal A_y(\nabla_x u)  [\eta^2(x)+\eta^2(x+y)] J(y) dx dy 
\\&\le& \nonumber  \int_{\mathbb R^n} |\nabla u|^2 |\nabla \eta|^2 dx+ C \iint_{  \mathbb R^{2n}}  [ \eta(x) - \eta(y) ] ^2 J(x-y) d x dy  , 
  \end{eqnarray} 
 where $C>0$. For the rest of the proof, we provide an estimate for the right-hand side of the above inequality. First of all it is straightforward to compute  
   \begin{equation} \label{BR}
\int_{B_R\setminus B_{\sqrt {R} } }   |\nabla u|^2   |\nabla \eta|^2 \le C \left\{
                      \begin{array}{ll}
                        \frac{1}{\log R}, & \hbox{if $n=2$,} \\
                       \frac{R^{n-2}+ R^{(n-2)/2}}{|n-2||\log R|^2}, & \hbox{if $n\neq 2$.}
                                                                       \end{array}
                    \right.\end{equation} 
 Therefore, in two dimensions $\frac{C}{\log R}$ is an upper bound estimate when $C>0$ is a constant independent from $R$. Now, we provide an estimate for the second term in the right-hand side of (\ref{esteta}).  Due to the symmetry in this term, we shall   a symmetric domain decomposition 
 \begin{equation}\label{domgamm}
  \Gamma_{R,\sqrt R}:=\cup_{i=1}^6 \Gamma^i_R,
\end{equation} as in Definition \ref{dfnsym}. From the definition of test function $\eta$ we have $|\eta(x)-\eta(y)|=0$ on $\Gamma^5_R$ and $\Gamma^6_R$. In addition, if $(x,y)\in \Gamma^4_R$, then $x\in B_{\sqrt R}$ and $y\in \mathbb R^n\setminus B_{R}$. This implies that $|x-y|>R-\sqrt R$.  So, for large enough $R$, we conclude that $|x-y|>\delta_0$. Therefore, $I_4(R)=0$ for large enough $R$.  From  this, (\ref{BR}) and (\ref{esteta}), we conclude 
\begin{eqnarray}\label{intIJR1}
&&\int_{  \mathbb R^{n}\cap B_{\sqrt R}\cap  \{|\nabla_x u|\neq 0\}}     |\nabla u|^2 \mathcal{\kappa}^2 + | \nabla_T |\nabla u| |^2    dx 
+ c  \iint_{ \{\mathbb R^n \times B_{\sqrt R}\} \cap \{|\nabla_x u|\neq 0\} }    \mathcal A_y(\nabla_x u) J(y)  dx dy 
 \\&\le&  \label{intIJR2}  \frac{C}{\log R} + C \sum_{i=1}^3  \iint_{\Gamma^i_R \cap |x-y|\le \delta_0 }  \left[ \eta(x) - \eta(y) \right] ^2 |  x-y |^{-n-\alpha} d x dy  =:
\frac{C}{\log R} +  C \sum_{i=1}^3 I_i(R) . 
  \end{eqnarray} 
  We now provide an upper-bound estimate for $I_i(R)$ when $1\le i\le 3$ as follows.
\\
\\
\noindent {\bf Upper-Bound for ${\bf I_1(R)}$}. Assume that  $(x,y)\in \Gamma^1_R \cap \{|x-y|\le \delta_0\}$ which implies,  without loss of generality, 
$$x\in B_{\sqrt{R}}\setminus  B_{\sqrt{R}-\delta_0}\ \ \text{ and } \ y\in B_ {\sqrt{R}+\delta_0} \setminus B_{\sqrt R}.$$ 
Therefore, $\eta(x)=\frac{1}{2}$ and $\eta(y)=1-\frac{\log |y|}{\log R}$. From (\ref{logab}) and the fact that $|x|< \sqrt R \le |y|$,  we get 
\begin{eqnarray*}
|\eta(x) -\eta(y)|^2 &=& \frac{1}{\log^2 R} |\log |y| - \log \sqrt R|^2
\le \frac{1}{\log^2 R} \frac{1}{|y| \sqrt R}  | |y| -  \sqrt R|^2 
 \le \frac{1}{R\log^2 R}   | |y| -  |x||^2 
 \\&\le& \frac{1}{R\log^2 R}   | y -  x|^2   .
\end{eqnarray*}
Therefore, 
\begin{eqnarray*}
I_1(R) 
\le  \frac{C }{R\log^2 R} \left[ \int_{B_{\sqrt R}\setminus B_{\sqrt R-\delta_0}} dx \right]\left[\int_{B_{\delta_0}}  |z|^{2-n-\alpha} dz \right] \le   \frac{C}{\sqrt R \log^2 R} , 
\end{eqnarray*} 
where $C=C( \frac{\delta_0^{2-\alpha}}{2-\alpha})$ is positive constant independent from $R$. Here, we have used the assumptions $\alpha<2$ and $n=2$. 
\\
\\
\noindent {\bf Upper-Bound for ${\bf I_2(R)}$}.  Suppose that  $(x,y)\in \Gamma^2_R \cap \{|x-y|\le \delta_0\}$ which implies,  without loss of generality,  $|x|\le |y|$. From (\ref{logab})  and since $x,y\in B_ R\setminus B_{\sqrt R}$,  we have 
\begin{equation*}
|\eta(x) -\eta(y)|^2 = \frac{1}{\log^2 R} |\log |y| - \log |x||^2 \le \frac{1}{\log^2 R} \frac{1}{|x| |y|}  | |y| -  |x||^2 \le \frac{1}{|x|^2\log^2 R}   | y -  x|^2   .
\end{equation*}
Therefore, 
\begin{eqnarray*}
I_2(R) &\le& 
\frac{C}{\log^2 R} \left[ \int_{B_{ R}\setminus B_{\sqrt R}} \frac{1}{|x|^2} dx \right]\left[  \int_{B_{\delta_0}}  |z|^{2-n-\alpha} dz  \right]
\le \frac{C }{\log^2 R} \left[ \int_{\sqrt R}^R r^{n-3}dr \right] \left[\int_{0}^{\delta_0}  r^{1-\alpha} dr\right] 
\\&\le &  \frac{C}{\log R} , 
\end{eqnarray*} 
where again $C=C( \frac{\delta_0^{2-\alpha}}{2-\alpha})$ is positive constant independent from $R$, and $n=2$. 
\\
\\
\noindent {\bf Upper-Bound for ${\bf I_3(R)}$}.  Suppose that $(x,y)\in \Gamma^3_R \cap \{|x-y|\le \delta_0\}$, which implies, without loss of generality, $x\in B_{R}\setminus  B_{R-\delta_0}$ and $y\in B_ {{R}+\delta_0} \setminus B_{R}$ for large enough $R$.  Therefore,  $\eta(x)=1-\frac{\log |x|}{\log R}$ and $\eta(y)=0$. Applying (\ref{logab}) and the fact that $|x|<  R \le |y|$,  we have  
\begin{eqnarray*}
|\eta(x) -\eta(y)|^2 &=& \frac{1}{\log^2 R} |\log |x| - \log  R|^2 \le \frac{1}{\log^2 R} \frac{1}{|x| R}  | |x| -   R|^2  \le \frac{1}{|x|^2 \log^2 R}   | |y| -  |x||^2 
\\&\le& \frac{1}{|x|^2\log^2 R}   | y -  x|^2  . 
\end{eqnarray*}
Therefore, 
\begin{equation*}
I_3(R) \le 
\frac{C}{\log^2 R}  \left[\int_{B_{ R}\setminus B_{R-\delta_0}} \frac{1}{|x|^2} dx \right]\left[  \int_{B_{\delta_0}}|z|^{2-n-\alpha} dz \right]
\le   \frac{C}{\log^2 R} . 
\end{equation*} 
where again $C=C( \frac{\delta_0^{2-\alpha}}{2-\alpha})$ is positive constant independent from $R$, and $n=2$.  

\noindent Combining the above upper-bounds and (\ref{intIJR1})-(\ref{intIJR2}), we conclude that  for large $R$
\begin{equation}\label{intIJR3}
\int_{  \mathbb R^{2}\cap B_{\sqrt R}\cap  \{|\nabla_x u|\neq 0\}}     |\nabla u|^2 \mathcal{\kappa}^2 + | \nabla_T |\nabla u| |^2    dx 
+ c  \iint_{ \{\mathbb R^2 \times B_{\sqrt R}\} \cap \{|\nabla_x u|\neq 0\} }    \mathcal A_y(\nabla_x u) J(y)  dx dy  \le  
\frac{C}{\log R} .
  \end{equation} 
  Note that for all $x,y\in\mathbb R^2$  $$\mathcal A_y(\nabla_x u)=  |\nabla_x u(x)|  |\nabla_x u(x+y)| -\nabla_x u(x)  \cdot \nabla_x u(x+y) \ge 0.$$ Sending $R\to\infty$, and assuming $|\nabla_x u|\neq 0$, we get
 \begin{equation*}
  |\nabla u|^2 \mathcal{\kappa}^2=0, \ \  | \nabla_T |\nabla u| |^2  =0 \ \ \text{and} \ \  \mathcal A_y(\nabla_x u) J(y) = 0 \ \ \text{a.e. for all} \ \  x,y\in\mathbb R^2. 
  \end{equation*}
Therefore,  $\mathcal A_y(\nabla_x u) = 0$ for all $x\in\mathbb R^2$ and $y\in B_{\delta_1}\subset \mathbb R^2$. This implies that 
\begin{equation*}
|\nabla_x u(x)|  |\nabla_x u(x+y) |=\nabla_x u(x)  \cdot \nabla_x u(x+y) , 
\end{equation*}
when $|\nabla_x u|\neq 0$ that is equivalent to 
\begin{equation*}
u_{x_1}(x) u_{x_2}(x+y)= u_{x_1}(x+y) u_{x_2}(x), 
\end{equation*}
and therefore, 
  \begin{equation*}\label{unabla}
\nabla_x u(x) \cdot \nabla_x^\perp u(x+y)=0. 
\end{equation*}
This completes the proof.    

\hfill $\Box$

\vspace*{.2 cm}

\noindent {\it Proof of Theorem \ref{Thmain2}}.  The proof is similar to the proof of Theorem \ref{ThmPoin}.  We apply the test function (\ref{testeta}) in (\ref{esteta}). The jump kernel $J$ satisfies (\ref{JDecay}) with a decay rate (\ref{JDecayr}) when $D(r)<C r^{-\theta}$ for $\theta>3$. From the definition of test function $\eta$ we have $|\eta(x)-\eta(y)|=0$ on $\Gamma^5_R$ and $\Gamma^6_R$. From the estimate (\ref{BR}) and the domain decomposition (\ref{domgamm}), 
\begin{eqnarray}\label{intIJR2}
&&\int_{  \mathbb R^{n}\cap B_{\sqrt R}\cap  \{|\nabla_x u|\neq 0\}}     |\nabla u|^2 \mathcal{\kappa}^2 + | \nabla_T |\nabla u| |^2    dx 
+ c  \iint_{ \{\mathbb R^n \times B_{\sqrt R}\} \cap \{|\nabla_x u|\neq 0\} }    \mathcal A_y(\nabla_x u) J(y)  dx dy 
 \\&\le&  \label{intIJR2}  \frac{C}{\log R} + 
 C \sum_{i=1}^4  \iint_{\Gamma^i_R \cap |x-y|\le \delta_0 }  \left[ \eta(x) - \eta(y) \right] ^2 |  x-y |^{-n-\alpha} d x dy \\
&&
 +C \sum_{i=1}^4  \iint_{\Gamma^i_R \cap |x-y| > \delta_0 }  \left[ \eta(x) - \eta(y) \right] ^2  J(x-y) d x dy  =:
\frac{C}{\log R} +  C \sum_{i=1}^4 I_i(R) + C\sum_{i=1}^4 \bar I_i(R) . 
  \end{eqnarray} 
Note that upper-bound estimates for all $I_i(R)$ are given in the proof of Theorem \ref{Thmain}. We now establish upper-bounds for  $\bar I_i(R)$ for $1\le i\le 4$ in various cases. 
\\
\\
\noindent {\bf Upper-Bound for ${\bf \bar I_1(R)}$}. Assume that  $(x,y)\in \Gamma^1_R \cap \{|x-y| >\delta_0\}$. In this case, we have 
\begin{equation*}
|\eta(x) -\eta(y)|^2 \le \frac{1}{R\log^2 R}   | y -  x|^2   .
\end{equation*}
For the algebraic decay $D(r)< C r^{-\theta}$ when $\theta>3$, and for two dimensions, we have   
 \begin{eqnarray}\label{J1R}
\bar I_1(R) & \le & \frac{C}{R\log^2 R} \left[ \int_{ B_{\sqrt{R}} } dx\right] \left[ \sum_{k=1}^\infty \int_{k \delta_0 <|z|<2k \delta_0} |z|^2 J(z) dz \right]
\\&\le&\nonumber \frac{C \delta_0^{2-\theta}}{R\log^2 R}   \left[ \int_{ B_{\sqrt{R}} }  dx \right] \left[ \sum_{k=1}^\infty k^{2-\theta}\right] \le  \frac{C }{ \log^2 R} , 
\end{eqnarray} 
when $C=C(\delta_0^{2-\theta } )$ is positive constant independent from $R$.  
\\
\\
\noindent {\bf Upper-Bound for ${\bf \bar I_2(R)}$}. Suppose that  $(x,y)\in \Gamma^2_R \cap \{|x-y|> \delta_0\}$. In this case, assuming that $|x|\le |y|$, we have 
\begin{equation*}
|\eta(x) -\eta(y)|^2  \le \frac{1}{|x|^2\log^2 R}   | y -  x|^2   .
\end{equation*}
For the jump kernel satisfying (\ref{JDecay}) and (\ref{JDecayr}) with the decay rate $D(r)< C r^{-\theta}$ when $\theta>3$,  we have 
\begin{eqnarray}\label{J2R}
\bar I_2(R) &\le&  \frac{C}{\log^2 R} \left[ \int_{B_{ R}\setminus B_{\sqrt R}} \frac{dx}{|x|^2} \right] \left[ \sum_{k=1}^\infty \int_{k \delta_0<|z|<2k \delta_0} |z|^2 J(z) dz\right]
\\&\le& \nonumber \frac{C \delta_0^{2-\theta}}{\log^2 R} \left[  \int_{\sqrt R}^R r^{n-3}dr  \right]  \left[\sum_{k=1}^\infty k^{2-\theta}\right] \le  \frac{C }{\log R}. 
\end{eqnarray} 
 when $C=C(\delta_0^{2-\theta } )$ is positive constant independent from $R$.  
\\
\\
\noindent {\bf Upper-Bound for ${\bf \bar I_3(R)}$}. Suppose that $(x,y)\in \Gamma^3_R \cap \{|x-y|> \delta_0\}$.  Since $\eta(x)=1-\frac{\log |x|}{\log R}$ and $\eta(y)=0$, for $|x|<  R \le |y|$, we have 
\begin{equation*}
|\eta(x) -\eta(y)|^2  \le \frac{1}{|x|^2\log^2 R}   | y -  x|^2  . 
\end{equation*}
Since this is the same as the previous case, we get the same upper-bound that is 
\begin{equation}\label{J3R}
\bar I_3(R) \le   \frac{C}{\log R}.
\end{equation} 
\\
\noindent {\bf Upper-Bound for ${\bf \bar I_4(R)}$}. Suppose that  $(x,y)\in \Gamma^4_R \cap \{|x-y|> \delta_0\}$. In this case, we have $\eta(x)=\frac{1}{2}$ and $\eta(y)=0$ and $|x-y|>R-\sqrt R>\delta_0$ for  large enough $R$.   For the jump kernel satisfying (\ref{JDecay}) and (\ref{JDecayr}) with the decay rate $D(r)< C r^{-\theta}$ when $\theta>3$,  we have 
\begin{equation*}\label{J4R}
\bar I_4(R) = \frac{1}{4} \int_{ B_{\sqrt R}}  dx   \sum_{k=1}^\infty \int_{k(R-\sqrt R)<|z|<2k (R-\sqrt R)} J(z) dz \le \frac{C R}{(R-\sqrt R)^{\theta }}  \sum_{k=1}^\infty k^{-\theta } 
 \le  \frac{C}{ R^{\theta -1} }. 
\end{equation*} 
 Combining the above cases, we conclude that  for large $R$
\begin{equation}\label{intIJR4}
\int_{  \mathbb R^{2}\cap B_{\sqrt R}\cap  \{|\nabla_x u|\neq 0\}}     |\nabla u|^2 \mathcal{\kappa}^2 + | \nabla_T |\nabla u| |^2    dx 
+ c  \iint_{ \{\mathbb R^2 \times B_{\sqrt R}\} \cap \{|\nabla_x u|\neq 0\} }    \mathcal A_y(\nabla_x u) J(y)  dx dy  \le  
\frac{C}{\log R} .
  \end{equation} 
This completes the proof. 

\hfill $\Box$

\vspace*{.2 cm}

\noindent {\it Proof of Theorem \ref{thmlione}}. Let  $\eta\in C_c^1(\mathbb R^n)$ be a  test function.  Multiply both sides of (\ref{linPhi}) with $\eta^2(x) \sigma(x) $ and integrate to get  
 \begin{equation*}\label{}
-\int_{\RR^{n}} \div(\phi^2(x)\nabla \sigma(x)) \eta^2(x) \sigma(x)  dx+ \iint_{\RR^{2n}} \left( \sigma(x)- \sigma(y) \right)  \sigma(x) \phi(x) \phi(y)  J(x-y) \eta^2 (x) dx dy  = 0 . 
\end{equation*}
This yields 
    \begin{equation*}\label{}
\int_{\RR^{n}} \phi^2(x)\nabla \sigma(x) \cdot\nabla ( \eta^2(x) \sigma(x) ) dx  + \iint_{\RR^{2n}}  [\eta^2(x) \sigma(x) -  \eta^2(y) \sigma(y)]    [\sigma(x)- \sigma(y)] \phi(x) \phi(y)  J(x-y)   dy dx = 0   . 
  \end{equation*}
 Simplifying the above and using the following formula 
\begin{equation}\label{etaiden}
 [\eta^2(x) \sigma(x) -  \eta^2(y) \sigma(y)] = \frac{1}{2}  [\sigma(x) -  \sigma(y)][\eta^2(x) + \eta^2(y)] 
 + \frac{1}{2}  [\sigma(x) +  \sigma(y)][\eta^2(x) - \eta^2(y)], 
\end{equation}
we conclude 
\begin{eqnarray*}
0&=&I^2(\mathbb R^n)+K^2 ({\RR^{2n}} )\\&&+2  \int_{\RR^{n}} \phi^2  \sigma \eta \nabla \sigma \cdot\nabla  \eta  dx + \frac{1}{2}\iint_{\RR^{2n}}   [\sigma^2(x)- \sigma^2(y)] \phi(x) \phi(y)   [\eta^2(x) - \eta^2(y)] J(x-y)   dy dx , 
\end{eqnarray*}
for $\Omega_1\subseteq \mathbb R^n$ and $\Omega_2\subseteq \mathbb R^{2n}$, and 
\begin{eqnarray*}
I^2(\Omega_1)&:=&\int_{\Omega_1} \phi^2  |\nabla \sigma|^2 \eta^2   dx , 
 \\
 K^2(\Omega_2)&:=&\frac{1}{2}\iint_{\Omega_2}   [\sigma(x)- \sigma(y)]^2 \phi(x) \phi(y)   [\eta^2(x)+ \eta^2(y)] J(x-y)   dy dx  .
 \end{eqnarray*}
  Applying the Cauchy-Schwarz inequality, one can see that 
\begin{equation}
I^2(\mathbb R^n)+K^2(\mathbb R^{2n}) \le C [I(\mathbb R^n) M (\mathbb R^n) + K(\mathbb R^{2n})N(\mathbb R^{2n})] ,
\end{equation}
when $C$ is a nonnegative constant and 
\begin{eqnarray*}
M(\Omega_1) &:=& \int_{\Omega_1} \phi^2 \sigma^2 |\nabla \eta |^2    dx  ,
\\
 N(\Omega_2)&:=&\frac{1}{2}\iint_{\Omega_2}   [\sigma(x) + \sigma(y)]^2 \phi(x) \phi(y)   [\eta(x)- \eta(y)]^2 J(x-y)   dy dx . 
  \end{eqnarray*}
Let $\eta=1$ in $\overline {B_R}$ and $\eta=0$ in $\overline{\RR^n\setminus B_{2R}}$ with $||\nabla \eta||_{L^{\infty}(B_{2R}\setminus B_R)}\le C R^{-1}$. Then, for $R>1$
\begin{equation}
I^2(\mathbb R^n)+K^2(\mathbb R^{2n}) \le C [I(\mathbb R^n) M (B_{2R}\setminus B_R) + K(\mathbb R^{2n})N(\cup_{k=1}^4 \Gamma^k_R)]. 
\end{equation}
From the fact that $||\nabla \eta||_{L^{\infty}(B_{2R}\setminus B_R)}\le C R^{-1}$ and the assumption (\ref{Phiuxuy}) we get $M (B_{2R}\setminus B_R)\le C R^2$. On ther other hand, for $(x,y)$ in $\{\cup_{k=1}^4\Gamma^k_R \} $  we have 
  \begin{equation*}
 (\eta(x)-\eta(y))^2 \le C R^{-2} |x-y|^2  .    
   \end{equation*}
From this and the assumption (\ref{Phiuxuy}), we conclude  
 \begin{equation*}
N(\cup_{k=1}^4 \Gamma^k_R)  \le C R^{-2}  \iint_{\cup_{k=1}^4 \Gamma^k_R}   [\sigma(x) +  \sigma(y)]^2  \phi(x) \phi(y) |x-y|^2 J(x-y)   dy dx \le C. 
   \end{equation*}
Therefore, 
\begin{equation}
I^2(\mathbb R^n)- C I(\mathbb R^n) + K^2(\mathbb R^{2n}) -C K(\mathbb R^{2n}) \le 0. 
\end{equation}
Since $I,K\ge 0$, $I,K$ are bounded. Therefore, $0\le I(B_R)\le C$ and $0\le K(B_R\times B_R)\le C$ when $C$ is independent from $R$. This implies that  $I,K\equiv 0$. This implies $\sigma$ must be a constant.

 \hfill $\Box$

 \begin{lemma} Let $g,h\in C^1(\mathbb R^n)$, then
\begin{eqnarray}\label{idenLL}
L [g(x)h(x)] &=& g(x) L [h(x)] + h(x) L[g(x)]
\\&& \nonumber -   \int_{\mathbb R^n}    \left  [g(x) - g(y)  \right]  \left[h(x)-h(y)  \right] J(x-y) dy. 
\end{eqnarray}
\end{lemma}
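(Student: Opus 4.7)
The plan is to prove the identity by direct algebraic manipulation starting from the definition of $L$, together with a product-increment decomposition. Specifically, I would substitute $u=gh$ into
\begin{equation*}
L[u(x)] = \lim_{\epsilon\to 0} \int_{\{|x-y|>\epsilon\}} [u(y)-u(x)] J(x-y)\, dy,
\end{equation*}
and then rewrite the integrand $g(y)h(y)-g(x)h(x)$ using the elementary identity
\begin{equation*}
g(y)h(y) - g(x)h(x) = g(x)[h(y)-h(x)] + h(x)[g(y)-g(x)] + [g(y)-g(x)][h(y)-h(x)],
\end{equation*}
which is verified by adding and subtracting $g(x)h(y)$ (or equivalently by expanding $(g(x)+(g(y)-g(x)))(h(x)+(h(y)-h(x)))$). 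This is the decomposition that isolates a genuine cross term from the two "single-increment" terms.

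Next, I would split the principal-value integral along these three summands. Since $g(x)$ and $h(x)$ are constants with respect to the $y$-integration, the first two integrals factor as $g(x)\int [h(y)-h(x)]J(x-y)\,dy$ and $h(x)\int[g(y)-g(x)]J(x-y)\,dy$, which by definition equal $g(x)L[h](x)$ and $h(x)L[g](x)$ respectively (each understood in the principal-value sense). The remaining cross integral, involving $[g(y)-g(x)][h(y)-h(x)]$, can then be rewritten using the trivial sign identity $[g(y)-g(x)][h(y)-h(x)] = [g(x)-g(y)][h(x)-h(y)]$ to match the form in the statement.

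The main subtlety — the only point that is not pure bookkeeping — is justifying the split of the principal value into three pieces. Individually, the terms $g(x)[h(y)-h(x)]$ and $h(x)[g(y)-g(x)]$ are only conditionally integrable near the diagonal $y=x$ for kernels like $J(z)\asymp |z|^{-n-\alpha}$ with $\alpha\ge 1$, so the PV prescription is essential for them. The cross term, however, is absolutely integrable near $y=x$ by the $C^1$ hypothesis, since $[g(y)-g(x)][h(y)-h(x)] = O(|x-y|^2)$ and jump kernels of interest satisfy $\int_{|z|\le 1} |z|^2 J(z)\,dz < \infty$. Hence, on $\{|x-y|>\epsilon\}$ the decomposition is legal term-by-term, and after sending $\epsilon\to 0$ the first two contributions give $g(x)L[h](x)+h(x)L[g](x)$ by definition while the cross term passes through the limit by dominated convergence. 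Putting these pieces together yields the claimed identity. The hardest part — really the only thinking required — is recognizing that the quadratic vanishing of the cross increment is what allows the PV to be broken up in the first place; after that, the result is a one-line substitution.
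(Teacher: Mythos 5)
Your decomposition is the right one, and the paper offers no proof of this lemma to compare against, so the substance of your argument — expand $g(y)h(y)-g(x)h(x)$ into the two single-increment terms plus the cross increment, split the principal value, and observe that the cross term is absolutely convergent near the diagonal because $[g(y)-g(x)][h(y)-h(x)]=O(|x-y|^2)$ — is exactly the standard (and essentially the only) route. Your remark about why the PV may legitimately be broken into three pieces is a genuine point and is handled correctly.

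However, the final step does not close as you claim. Your (correct) expansion gives
\begin{equation*}
L[gh](x)=g(x)L[h](x)+h(x)L[g](x)+\int_{\mathbb R^n}[g(y)-g(x)][h(y)-h(x)]\,J(x-y)\,dy,
\end{equation*}
and the sign identity you invoke, $[g(y)-g(x)][h(y)-h(x)]=[g(x)-g(y)][h(x)-h(y)]$, flips \emph{both} factors and hence leaves the product — and the $+$ sign — unchanged. The statement (\ref{idenLL}) has a $-$ in front of the cross integral, so your derivation does not "match the form in the statement"; it contradicts it. The resolution is that the lemma as printed carries a sign typo: taking $g=h$ nonconstant, the cross term $\int[g(x)-g(y)]^2J\,dy$ is strictly positive, so the two signs are genuinely different, and the version with $+$ is the one actually used later (in the proof of Proposition \ref{proplin}, one needs $L[\sigma\phi]-\sigma L[\phi]=\int[\sigma(y)-\sigma(x)]\phi(y)J\,dy$, which follows only from the $+$ version). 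You should have flagged this discrepancy rather than asserting the forms agree; as written, your last sentence proves a statement different from the one quoted.
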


\begin{prop}\label{proplin}
Let $\phi$ and $\psi$ be classical solutions for the linearized equation (\ref{main}) that is 
\begin{eqnarray}\label{linphi1}
\Delta \phi + c L[\phi] = f'(u) \phi \  \ \text{and} \ \  
\label{linphi2} \Delta \psi + c L[\psi] = f'(u) \psi \ \ \text{in} \ \ \mathbb R^n. 
 \end{eqnarray}
Let $\phi>0$ and define the quotient $\sigma:=\frac{\psi}{\phi}$. Then, 
\begin{equation}\label{linPhip}
\div(\phi^2(x)\nabla \sigma(x)) +    \lim_{\epsilon\to 0} \int_{\{y\in \mathbb R^n, |x-y|>\epsilon\} }  \left( \sigma(y)- \sigma(x) \right)\phi(x) \phi(y)  J(x-y) dy  = 0 \ \ \text{in} \ \ \mathbb R^n . 
\end{equation}
\end{prop}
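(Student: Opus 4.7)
The plan is to imitate the classical Berestycki--Caffarelli--Nirenberg trick of subtracting $\phi$ times the equation for $\psi$ from $\psi$ times the equation for $\phi$, and check that both the local and the nonlocal parts combine to give exactly the divergence-form identity \eqref{linPhip}. Since $\phi>0$ and $\psi$ are classical solutions of the linearized equations, the quotient $\sigma=\psi/\phi$ is well-defined and smooth enough for the computations.

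First I would handle the local part. Writing $\psi=\sigma\phi$ and computing the Laplacian by the product rule gives
$\Delta\psi=(\Delta\sigma)\phi+2\nabla\sigma\cdot\nabla\phi+\sigma\Delta\phi$,
so
\begin{equation*}
\phi\,\Delta\psi-\psi\,\Delta\phi=\phi^2\Delta\sigma+2\phi\,\nabla\sigma\cdot\nabla\phi=\div\bigl(\phi^2\nabla\sigma\bigr).
\end{equation*}
Next I would compute the analogous combination for the nonlocal operator $L$. Using the definition \eqref{L} of $L$ and rearranging the integrands,
\begin{equation*}
\phi(x)L[\psi(x)]-\psi(x)L[\phi(x)]=\lim_{\epsilon\to 0}\int_{|x-y|>\epsilon}\bigl[\phi(x)\psi(y)-\psi(x)\phi(y)\bigr]\,J(x-y)\,dy,
\end{equation*}
and the substitution $\psi=\sigma\phi$ turns the bracket into $\phi(x)\phi(y)[\sigma(y)-\sigma(x)]$. (Alternatively, one can derive this by applying the Leibniz-type identity \eqref{idenLL} to $L[\sigma\phi]$, multiplying by $\phi$, and rearranging; either route yields the same expression and uses only the symmetry of $J$.)

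Now I would take $\phi\cdot\eqref{linphi1}_{\psi}-\psi\cdot\eqref{linphi1}_{\phi}$, where the subscript indicates which linearized equation. The nonlinear coefficient $f'(u)$ is the same in both equations, so the right-hand sides cancel identically: $f'(u)\phi\psi-f'(u)\psi\phi=0$. Combining this with the two identities computed above yields
\begin{equation*}
\div\bigl(\phi^2(x)\nabla\sigma(x)\bigr)+c\lim_{\epsilon\to 0}\int_{|x-y|>\epsilon}\bigl[\sigma(y)-\sigma(x)\bigr]\phi(x)\phi(y)\,J(x-y)\,dy=0,
\end{equation*}
which is precisely \eqref{linPhip}.

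The computation is essentially algebraic, so there is no genuine analytic obstacle; the only point that deserves care is justifying the cancellation inside the principal-value integral for $L$, which follows because both limits $\lim_{\epsilon\to 0}\int_{|x-y|>\epsilon}\phi(x)\psi(y)J\,dy$ and $\lim_{\epsilon\to 0}\int_{|x-y|>\epsilon}\psi(x)\phi(y)J\,dy$ exist under the standing assumptions (they are just $\phi(x)(L[\psi]+\psi(x)\int J\,dy)$ and its analogue, with the divergent tails subtracted symmetrically using the evenness of $J$).
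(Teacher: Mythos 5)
Your proof is correct and follows essentially the same route as the paper's: the paper substitutes $\psi=\sigma\phi$ and expands $L[\sigma\phi]-\sigma L[\phi]$ via the Leibniz-type identity \eqref{idenLL}, which is just a repackaging of your direct antisymmetric combination $\phi\,L[\psi]-\psi\,L[\phi]$ (an equivalence you yourself point out). The only remark worth adding is that your final identity (correctly) carries the factor $c$ on the nonlocal term, which the display \eqref{linPhip} in the statement omits — evidently a typo there, since the paper's own derivation produces the same $c$.
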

\begin{proof}
Since $\psi =\sigma \phi$,  we have 
\begin{equation}\label{Ls}
\Delta (\sigma (x) \phi (x))+ c L[ \sigma (x) \phi (x)] = f'(u) \sigma(x) \phi(x). 
\end{equation}
Multiplying (\ref{linphi1}) with $\sigma $ and combining with (\ref{Ls}) we get 
\begin{equation}\label{LLL}
\Delta (\sigma (x) \phi (x))  - \sigma (x) \Delta (\phi (x)) +c\left( L[  \sigma(x) \phi(x) ] -  \sigma(x) L[\phi(x)]\right)= 0 . 
\end{equation}
Applying formula (\ref{idenLL}) and 
$$ 
\phi (x) \Delta (\sigma (x))+2 \nabla \sigma (x)\cdot\nabla\phi (x)  +c \lim_{\epsilon\to 0} \int_{\{y\in \mathbb R^n, |x-y|>\epsilon\} }   [\sigma(y)-\sigma(x)] \phi(y) J(x-y) dy =0. 
$$
Multiplying $\phi(x)$ completes the proof. 

\end{proof}

Let $u$ be a monotone solution of (\ref{main}). Set $\phi:=\frac{\partial u}{\partial x_n} $ and $\psi:=\nabla u\cdot \nu$ for $\nu(x)=\nu(x',0):\RR^{n-1}\to \RR $. Therefore, $\phi$ and $\psi$ satisfy the linearized equation that is (\ref{linphi1})-(\ref{linphi2}). Now, define the quotient $\sigma:=\frac{\psi}{\phi}$. From Proposition \ref{proplin}, we have 
\begin{equation}\label{linPhip2}
\div(\phi^2(x)\nabla \sigma(x)) +    \lim_{\epsilon\to 0} \int_{\{y\in \mathbb R^n, |x-y|>\epsilon\} }  \left( \sigma(y)- \sigma(x) \right)\phi(x) \phi(y)  J(x-y) dy  = 0 \ \ \text{in} \ \ \mathbb R^n . 
\end{equation}
 Since $|\nabla u|$ is globally bounded,  we conclude that $|\sigma|\le \frac{C}{\phi}$. This implies that 
\begin{equation*}
 [\sigma(x) +  \sigma(y)]^2 \le C \left(\frac{1}{\phi^2(x)} + \frac{1}{\phi^2(y)} \right). 
\end{equation*}
Therefore, 
\begin{equation*}
 [\sigma(x) +  \sigma(y)]^2  \phi(x) \phi(y)  \le C  \left( \frac{\phi(x)}{\phi(y)} + \frac{\phi(y)}{\phi(x)}  \right). 
\end{equation*}
Suppose now that the operator $T$ satisfies the following  Harnack inequality. More precisely, let $\chi$ is continuous and positive in $\mathbb R^n$ and is a weak solution to $T\chi + a(x) \chi =0$ in $B_R$, when $a\in L^\infty(B_1)$ and $||a||_{L^\infty(B_R)}<K$, then 
 \begin{equation}\label{chiharn}
 \sup_{B_{R/2}} \chi  \le C \inf_{B_{R/2}} \chi, 
 \end{equation}
when $C$ is a positive constant depending on operator $T$ and $K$ and independent from $\chi$. Applying the above, for $\phi$ we have 
 \begin{equation}\label{harn}
 \sup_{B_1(x_0)} \phi  \le C \inf_{B_1(x_0)} \phi, \ \ \text{for all} \ \ x_0\in\mathbb R^n. 
 \end{equation}
  This implies that 
\begin{equation*}
 [\sigma(x) +  \sigma(y)]^2  \phi(x) \phi(y)  \le C. 
\end{equation*}
From this, the assumption (\ref{Phiuxuy}) in Theorem \ref{thmlione} is bounded by
\begin{equation}\label{Phiuxuysim}
C \int_{B_{2R}\setminus B_R}  dx +C   \iint_{ \{\cup_{k=1}^4 \Gamma^k_R \}}  |x-y|^2 J(x-y)   dy dx.  
 \end{equation}
 Applying similar arguments as in the proof of Theorem \ref{Thmain}, one can conclude that the above term is bounded by $CR^2$ in two dimensions.  So, Theorem \ref{thmlione} implies that $\sigma$ must be a constant. This provides a second proof for Theorem \ref{Thmain}.

 \begin{dfn}\label{dfnps} A solution $u$ of (\ref{main}) is called pointwise-stable if there exists $\chi>0$ such that solves the  linearized equation that is
 \begin{equation}\label{lin}
\Delta \chi + c L[\chi] + f'(u)\chi=0 \ \ \ \text{in } \ \ \mathbb R^n. 
 \end{equation} 
 \end{dfn}
 
 We now show that both notations of stability, the variational stability and the non-variational pointwise-stability are equivalent for solutions of (\ref{main}).  We shall follow methods and ideas  provided by  Ghoussoub and Gui in \cite{gg1},  by Berestycki, Caffarelli and Nirenberg in \cite{bcn}) and by Hamel et al. in \cite{hrsv}. 
 
 \begin{thm}\label{thmst}
 A solution of (\ref{main})  is pointwise-stable if and only if 
 it is a stable solution.
\end{thm}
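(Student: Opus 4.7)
The plan splits into two implications. The direction pointwise-stable $\Rightarrow$ stable is a near-verbatim adaptation of the proof of Proposition \ref{Propstab}: given $\chi > 0$ solving the linearized equation (\ref{lin}), I would multiply by $\zeta^2/\chi$ for arbitrary $\zeta \in C_c^1(\mathbb{R}^n)$, integrate, and then apply Lemma \ref{lemtech} to rewrite the result via the Dirichlet form $\mathcal{I}$. The two pointwise inequalities
$$[\chi(x) - \chi(y)]\left[\frac{\zeta^2(x)}{\chi(x)} - \frac{\zeta^2(y)}{\chi(y)}\right] \leq [\zeta(x) - \zeta(y)]^2$$
(valid since $\chi(x)\chi(y) > 0$) and $2\,\nabla\zeta \cdot \nabla\chi \,\zeta/\chi - |\nabla\chi|^2 \zeta^2/\chi^2 \leq |\nabla\zeta|^2$ then reproduce (\ref{stability}) with $\chi$ playing the role that $\partial u/\partial x_n$ plays in Proposition \ref{Propstab}. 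Nothing beyond the positivity of $\chi$ is needed.

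For the converse, stable $\Rightarrow$ pointwise-stable, I would construct $\chi$ as a locally uniform limit of principal Dirichlet eigenfunctions on an exhausting family of balls, following Berestycki-Caffarelli-Nirenberg \cite{bcn}, Ghoussoub-Gui \cite{gg1}, and Hamel et al.\ \cite{hrsv}. For each $R > 0$, set
$$\lambda_1(R) := \inf\left\{\mathcal{I}(\zeta,\zeta) - \int_{\mathbb{R}^n} f'(u)\zeta^2\, dx \,:\, \zeta \in H^1_0(B_R),\ \|\zeta\|_{L^2} = 1\right\},$$
with $\zeta$ extended by zero outside $B_R$ in the nonlocal part of $\mathcal{I}$. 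Standard spectral theory for the coercive form $(\mathcal{I},\mathcal{F})$ defined in (\ref{mathcalI}) yields a positive principal eigenfunction $\chi_R > 0$ in $B_R$ satisfying $\Delta\chi_R + cL[\chi_R] + (f'(u) + \lambda_1(R))\chi_R = 0$ in $B_R$, and the stability inequality (\ref{stability}) forces $\lambda_1(R) \geq 0$. After normalizing $\chi_R(x_0) = 1$ at a fixed interior point $x_0$, the Harnack inequality (\ref{chiharn}) together with interior regularity for the jump-diffusion generator $-\Delta - cL$ (from the theory in \cite{cksv1, ckk}) provides locally uniform bounds on $\chi_R$ away from zero and infinity; a diagonal subsequence then converges locally uniformly to a positive limit $\chi$ satisfying $\Delta\chi + cL[\chi] + (f'(u) + \lambda_\infty)\chi = 0$ in $\mathbb{R}^n$, where $\lambda_\infty := \lim_{R\to\infty} \lambda_1(R) \geq 0$.

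The main obstacle is the final step of showing $\lambda_\infty = 0$, so that $\chi$ solves the linearized equation (\ref{lin}) exactly as required by Definition \ref{dfnps}. My plan is to exploit the variational characterization
$$\lambda_\infty = \inf_{\zeta \in C_c^1(\mathbb{R}^n)} \frac{\mathcal{I}(\zeta,\zeta) - \int f'(u)\zeta^2}{\|\zeta\|_{L^2}^2},$$
together with Lipschitz cutoffs of the approximating eigenfunctions $\chi_R$ themselves as test functions, to produce the matching upper bound $\lambda_\infty \leq 0$. The delicate point in the mixed local-nonlocal setting is estimating the nonlocal contribution $\iint (\zeta(x)-\zeta(y))^2 J(x-y)\,dx\,dy$ for such cutoffs uniformly in $R$ without losing the constant; the explicit kernel bounds (\ref{Jumpc}), (\ref{Jumpki}), or (\ref{JDecay})-(\ref{JDecayr}) and the jump-diffusion regularity theory cited in the paper will be the essential tools. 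Once $\lambda_\infty = 0$ is established, the limit $\chi$ is the required positive solution of (\ref{lin}), completing the equivalence.
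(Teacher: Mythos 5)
The forward implication (pointwise-stable $\Rightarrow$ stable) is correct and is exactly what the paper does: it simply invokes the argument of Proposition \ref{Propstab}, which uses nothing about $\phi=\partial_{x_n}u$ beyond its positivity, so replacing $\phi$ by $\chi$ is legitimate.

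The converse contains a genuine gap at precisely the step you flag as ``the main obstacle'': the claim $\lambda_\infty\le 0$ is false in general, and no choice of test functions can rescue it. Stability only gives $\lambda_1(R)\ge 0$; it does not force the global infimum of the Rayleigh quotient to vanish. For a concrete obstruction, take $u$ with $f'(u)\le -c_0<0$ everywhere (e.g.\ a constant solution at a strict minimum of $F$): then for every $\zeta\in C_c^1(\mathbb R^n)$ one has $\mathcal I(\zeta,\zeta)-\int f'(u)\zeta^2\ge c_0\|\zeta\|_{L^2}^2$, so $\lambda_\infty\ge c_0>0$. Such a $u$ is stable, and the theorem asserts it is pointwise-stable, yet your limit of Dirichlet eigenfunctions would solve $\Delta\chi+cL[\chi]+(f'(u)+\lambda_\infty)\chi=0$ with $\lambda_\infty>0$, which is not the linearized equation of Definition \ref{dfnps}. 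The point is that the positive solution $\chi$ of \eqref{lin} one is after need not be in $L^2$ or decay at all --- it typically grows at infinity --- so it cannot be reached as a limit of normalized principal eigenfunctions.

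The paper circumvents this in the Berestycki--Caffarelli--Nirenberg manner: it uses the eigenvalue only through the \emph{strict} inequality $\lambda_1(R)>0$ (obtained by showing $R\mapsto\lambda_1(R)$ is strictly decreasing and nonnegative), which makes the operator $T-f'(u)$ coercive on $B_R$ and hence renders solvable the exterior-value problem $T[\chi_R]=f'(u)\chi_R$ in $B_R$ with $\chi_R=m_R>0$ outside $B_R$. Testing that problem against $\chi_R^-$ and using stability shows $\chi_R\ge 0$, and a compactness argument then produces a positive entire solution of \eqref{lin} with \emph{zero} spectral shift, no identification of $\lambda_\infty$ being needed. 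If you replace your eigenfunction limit by this boundary-value construction, the rest of your outline (Harnack inequality \eqref{chiharn} and interior estimates to get local compactness and strict positivity of the limit) goes through.
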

\begin{proof}   If $u$ is a  pointwise-stable solution  of (\ref{main}),  from Proposition \ref{Propstab},  $u$ is stable. We now assume that  the stability inequality (\ref{stability}) holds. Let the space $H_J(\mathbb R^n)$ be defined as the closure of $C_0^\infty(\mathbb R^n)$ with the norm $||\cdot||^2_{H_J(\mathbb R^n)} := \mathcal I(\cdot,\cdot)$ for $\mathcal I$ in (\ref{mathcalI}). For $R>1$ and for $\eta\in C_0^\infty(\mathbb R^n)$,  define 
\begin{equation}
\mathcal P_R(\eta)= \frac{1}{2} \int_{B_R} |\nabla \eta(x)|^2 dx + \frac{c}{2} \iint_{\mathbb R^{n}\times \mathbb R^{n}}  | \eta(x) -\eta(y) |^2 J (x-y) dy dx - \int_{B_R} f'(u)\eta^2 dx . 
\end{equation}
Assume that $\lambda_1(R)$ is the infimum of $\mathcal P_R$ on the class of $ \Xi_R$ that is 
\begin{equation}
 \Xi_R:=\left\{ \eta\in H_J(\mathbb R^n) \ \ \text{such that} \ \ \eta=0\ \ \text{in}\ \  \mathcal C B_R \ \ \text{and}\ \  \int_{B_R} \eta^2=1\right\}. 
\end{equation}
Since $u$ is a stable solution, we have that $\lambda_1(R)\ge 0$ and there exists eigenfunction $\zeta_R$ such that the infimum is attained for a function $\zeta_R\in  \Xi_R$. Note that if $\zeta_R$ is minimizer then $|\zeta_R|$ is also a minimizer. Therefore, $\zeta_R\ge 0$. The function $\zeta_R$ is nonzero and it satisfies 
 \begin{equation}\label{zetar}
  \left\{ \begin{array}{ll}
                                            T[ \zeta_R] = f'(u) \zeta_R+ \lambda_1(R) \zeta_R, & \hbox{if $|x|< R$,} \\
                       \zeta_R=0, & \hbox{if $|x| \ge R$,}
                                                                       \end{array}
                    \right.\end{equation}
where $T= -\Delta -cL$.  From the strong maximum principle for jump-diffusion processes, we conclude that $\zeta_R>0$ in $B_R$. In addition, for $R_2>R_1$, from Lemma \ref{lemtech} and the fact that $\zeta_{R_1}=0$ in $B_{R_2}\setminus B_{R_1}$ we conclude that 
\begin{equation}
\int_{B_{R_2}} \zeta_{R_1} T [\zeta_{R_2}] = \int_{B_{R_2}} \zeta_{R_2} T[\zeta_{R_1} ]<  \int_{B_{R_1}} \zeta_{R_2} T[\zeta_{R_1}]. 
\end{equation}
Applying this argument to solutions $\zeta_{R_1}$ and $\zeta_{R_2}$, we conclude that 
\begin{equation} 
\lambda_1(R_2) \int_{B_{R_1}} \zeta_{R_1} \zeta_{R_2} < \lambda_1(R_1) \int_{B_{R_1}} \zeta_{R_1} \zeta_{R_2}. 
\end{equation}
This implies that $\lambda_1(R)$ is decreasing in $R$. Therefore, $\lambda_1(R)>0$ for any $R>1$.  
We now consider the elliptic problem 
\begin{equation}\label{chiR}
  \left\{ \begin{array}{ll}
                                            T[ \chi_R] = f'(u) \chi_R, & \hbox{if $|x|< R$,} \\
                       \chi_R=m_R, & \hbox{if $|x| \ge R$,}
                                                                       \end{array}
                    \right.\end{equation}
where $m_R$ is a fixed positive constant. Considering $\phi_R=\chi_R-m_R$,  the above problem is connected with 
\begin{equation}\label{phiR}
  \left\{ \begin{array}{ll}
                                            T[ \phi_R] = f'(u) \phi_R +  f'(u) m_R , & \hbox{if $|x|< R$,} \\
                       \phi_R=0, & \hbox{if $|x| \ge R$.}
                                                                       \end{array}
                    \right.\end{equation}
This implies that $\chi_R$ and $\phi_R$ exist. Now, multiply (\ref{chiR}) with $\chi^-_R$ and integrate to conclude 
\begin{eqnarray}\label{}
&& \int_{B_R} \nabla \chi_R(x) \cdot \nabla \chi^-_R(x) dx + 
 \frac{c}{2} \iint_{\mathbb R^{n}\times \mathbb R^{n}}  ( \chi_R(x) -\chi_R(y) )( \chi_R^-(x) -\chi_R^-(y) ) J (x-y) dy dx 
\\ &&= \int_{B_R} f'(u) \chi_R(x)  \chi_R^-(x) dx  = - \int_{B_R} f'(u)   |\chi_R^-(x)|^2 dx .
\end{eqnarray}
Note that 
\begin{equation}
( \chi_R(x) -\chi_R(y) )( \chi_R^-(x) -\chi_R^-(y) ) \le - ( \chi_R(x) -\chi_R(y) )^2 \ \ \text{and} \ \ 
\nabla \chi_R(x) \cdot \nabla \chi^-_R(x) \le -|\nabla \chi_R(x)|^2. 
\end{equation}
This implies that 
\begin{equation}
\mathcal P_R(\chi^-_R) = \frac{1}{2} \int_{B_R} |\nabla \chi^-_R(x)|^2 dx + \frac{c}{2} \iint_{\mathbb R^{n}\times \mathbb R^{n}}  | \chi^-_R(x) -\chi^-_R(y) |^2 J (x-y) dy dx - \int_{B_R} f'(u)|\chi^-_R|^2 dx
\le 0. 
\end{equation}
From this we conclude that $\chi^-_R\equiv 0$ that is $\chi_R\ge 0$.   From some standard elliptic estimates, there is a subsequence $\{R_k\}_k$ going to infinity that $\chi_{R_k}$  converges $\chi>0$ that satisfies the linearized equation (\ref{lin}). This completes the proof.

\end{proof}

\section{Energy Estimates; Proofs of Theorem \ref{thmene}-\ref{thmeneal}}\label{secen}

In this section, we provide proofs for the energy estimates provided as main results. 
\\
\\
\noindent {\it Proof of Theorem \ref{thmene}}.   Set $c= 1$. Define the shift function $u^t(x):=u( x',x_n+t)$ for $( x',x_n)\in\mathbb R^{n-1}\times \mathbb R$ and $t\in\mathbb R$.   The energy functional for the shift function $u^t$ is 
\begin{equation}\label{energyt}
\mathcal E(u^t,B_R) =\mathcal E^{\text{Sob}}{(u^t,B_R)} + \mathcal E^{\text{Pot}} {(u^t,B_R)} , 
\end{equation}
  for  $R>\delta_0$ and 
\begin{eqnarray}\label{Esob}
\mathcal E^{\text{Sob}}(u^t,B_R) &=&  \frac{1}{2} \int_{B_R} |\nabla u^t(x)|^2 dx + 
\frac{1}{4} \int_{B_R} \int_{B_R}   [  u^t(x) -u^t(y) ]^2 J(x-y) dy dx \\&&+  \frac{1}{2} \int_{B_R} \int_{\mathbb{R}^n\setminus B_R}   [  u^t(x) -u^t(y) ]^2  J(x-y) dy dx, 
\end{eqnarray}
 and 
\begin{equation}\label{Epot}
\mathcal E^{\text{Pot}} {(u^t,B_R)}  =\int_{\Omega}  F(u^t(x)) dx. 
\end{equation}
 We now differentiate the energy functional  in terms of parameter $t$ to get
\begin{eqnarray*}
\partial_t\mathcal E(u^t,B_R) &=&   \int_{B_R} \nabla u^t(x)\cdot \nabla \partial_t u^t(x)  dx + \frac{1}{2} \int_{B_R} \int_{B_R}  [  u^t(x) -u^t(y) ]  [  \partial_t u^t(x) -\partial_t u^t(y) ]  J(x-y) dy dx \\&&+   \int_{B_R} \int_{\mathbb{R}^n\setminus B_R} [  u^t(x) -u^t(y) ]  [ \partial_t u^t(x) 
- \partial_t u^t(y) ] J(x-y) dy dx  - \int_{B_R}  f (u^t) \partial_t u^t dx   . 
\end{eqnarray*}
From Lemma \ref{lemtech} and performing integrating by parts, we conclude 
\begin{eqnarray*} 
\partial_t\mathcal E(u^t,B_R) &=&  
  \int_{\partial B_{R}} \partial_\nu u^t \partial_t u^t  dx +   \int_{\mathbb{R}^n\setminus B_R} \int_{B_R}  [  u^t(x) -u^t(y) ]   \partial_t u^t(x)  J(x-y) dy dx  \\&& \label{tutT}+ \int_{B_R} \partial_tu^t(x) \left( -\Delta u^t(x) - L[u^t(x)] \right) dx  - \int_{B_R}  f (u^t) \partial_t u^t dx   .
\end{eqnarray*}
Since $u^t$ is a solution of (\ref{main}), we can simplify the above as 
\begin{equation}
\partial_t \mathcal E(u^t,B_R) =   \int_{\partial B_{R}} \partial_\nu u^t \partial_t u^t  dx +  \int_{\mathbb{R}^n\setminus B_R} \int_{B_R} [  u^t(x) -u^t(y) ]   \partial_t u^t(x)  J(x-y) dy dx    .
\end{equation}
Since $|\partial_\nu u^t|\le M$ and $\partial_t u^t >0$, we get 
\begin{equation}
\partial_t \mathcal E(u^t,B_R) \ge  -M \int_{\partial B_{R}}  \partial_t u^t  dx +  \int_{\mathbb{R}^n\setminus B_R} \int_{B_R} [  u^t(x) -u^t(y) ]   \partial_t u^t(x)  J(x-y) dy dx    .
\end{equation}
Note that  $ \mathcal E(u,B_R)= \mathcal E(1,B_R)- \int_0^\infty \partial_t \mathcal E(u^t,B_R) dt$.  From the fact that $ \mathcal E(1,B_R)=0$, we obtain   
\begin{equation*}\label{EKT}
\mathcal E(u, B_R) \le  CR^{n-1} + \int_{\mathbb{R}^n\setminus B_R} \int_{B_R}  \int_0^\infty |   u^t(x) -u^t(y)  |  \partial_t u^t(x)  J(x-y) dt dy dx .  
\end{equation*}
  Note that $|u^t(x)- u^t(y)| \le C |x-y|$. From the boundedness of $u$ and $|\nabla u|$,   we have  
\begin{equation}\label{EUBR}
\mathcal E(u, B_R) \le  CR^{n-1} + C   \iint_{[(\mathbb{R}^n\setminus B_R )\times B_R ] }  |x-y|  J(x-y)  dy dx   . 
\end{equation}
We now apply a domain decomposition for $\Omega_R:=(\mathbb{R}^n\setminus B_R )\times B_R$ that is $\Omega_R=\cup_{i=1}^3 \Omega^i_R$ and  
\begin{equation}\label{Pi123}
\Omega^1_R:= (\mathbb{R}^n\setminus B_{R+\delta_0}) \times B_{R}, \ \ \Omega^2_R:= (B_{R+\delta_0}\setminus B_R) \times B_{R-\delta_0}, \ \ \Omega^3_R:= (B_{R+\delta_0}\setminus B_{R}) \times (B_{R}\setminus B_{R-\delta_0}). 
\end{equation}
Since the jumping kernel $J$ is truncated, $J$ is identically vanishes on $\Omega^1_R$ and $\Omega^2_R$. Therefore, the above estimate can be reformulated as 
 \begin{equation}\label{Bdelta}
\mathcal E(u, B_R) \le CR^{n-1} +  C   \iint_{ (B_{R+\delta_0}\setminus B_{R}) \times (B_{R}\setminus B_{R-\delta_0})  }  |x-y|  J(x-y)  dy dx   . 
\end{equation}
Hence, 
\begin{equation}\label{EphiBR}
\mathcal E(u, B_R) \le   CR^{n-1} +  C  \int_{ B_{R}\setminus B_{R-\delta_0}   } \int_{  B_{R+\delta_0}\setminus B_{R}  }   |x-y|^{1-n-\alpha}   dy dx   . 
\end{equation}
It is straightforward computations to show that 
\begin{equation}\label{IntIER}
  \int_{ B_{R}\setminus B_{R-\delta_0}   } \int_{  B_{R+\delta_0}\setminus B_{R}  }   |x-y|^{1-n-\alpha}   dy dx 
\le  C \left\{ \begin{array}{lcl}
\hfill \delta_0 R^{n-1}    \ \ \text{for}\ \ \  \alpha=1,\\   
\hfill \frac{(2\delta_0)^{2-\alpha}}{(1-\alpha)(2-\alpha)} R^{n-1}    \ \ \text{for}\ \ \alpha \neq 1. 
\end{array}\right.
\end{equation}
Here,  $C$ is a positive constant it does not depend on $R,\alpha, \delta_0$.  Combining (\ref{IntIER}) and (\ref{EphiBR}) finishes the proof of (\ref{EKRnminus1}) for the truncated kernels satisfying (\ref{Jumpki}). 

Now, assume that the kernel $J$ has decays as in (\ref{JDecay})-(\ref{JDecayr}) with decay-rate  $D(r)< C r^{-\theta}$ for all $r>\delta_0$. Considering (\ref{EUBR}) and the decomposition (\ref{Pi123}), we find an upper-bound for $\mathcal E(u, B_R) $.  We start with subdomain $\Omega^1$. Note that on this subdomain we have $|x-y|>\delta_0$. From (\ref{JDecayr}), we conclude 
\begin{eqnarray*}
\iint_{ \Omega^1_R}  |x-y|  J(x-y)  dy dx &\le&   \int_{B_{R}}   \int_{|x-y|>R+\delta_0- |y|}   |y-x| J(y-x)  dx dy 
\\&=& \int_{B_{R}}  \sum_{k=1}^\infty \int_{k(R+\delta_0- |y|)<|x-y|<2k(R+\delta_0- |y|)}   |y-x| J(y-x)  dx dy 
\\&\le&   \int_{B_{R}}  (R+\delta_0 - |y|)^{1-\theta} dy  \left[ \sum_{k=1}^\infty k^{1-\theta} \right]\le C R^{n-1} \int_0^R  (R+\delta_0 - r)^{1-\theta} dr
\\&=&C\left[ \frac{\delta_0^{2-\theta}}{\theta-2}  - \frac{(R+\delta_0)^{2-\theta} }{\theta - 2} \right] R^{n-1} \le 
C\left[ \frac{\delta_0^{2-\theta}}{\theta-2} \right] R^{n-1} , 
\end{eqnarray*} 
when $\theta>2$ and  $C$ is a positive constant that is independent from $R$.  Similarly, for the subdomain $\Omega^2_R$,  we have $|x-y|>\delta_0$. From (\ref{JDecayr}),  we conclude 
\begin{eqnarray*}
\iint_{ \Omega^2_R}  |x-y|  J(x-y)  dy dx   & \le& C \left[ \int_{B_{R+\delta_0}\setminus B_R} dx \right]\left[ \sum_{k=1}^\infty \int_{k \delta_0<|z|<2k \delta_0} |z| J(z) dz\right]
\\&\le& C\left[ \sum_{k=1}^\infty k^{1-\theta} \right]  R^{n-1}   \le   C R^{n-1} . 
\end{eqnarray*} 
Note that due to the structure of the domain $\Omega^3_R$, a similar estimate as  (\ref{IntIER}) holds for the estimate on $\Omega^3_R$. This completes the proof.  

 \hfill $\Box$

\noindent {\it Proof of Theorem \ref{thmeneal}}.  The proof is similar to the one of Theorem \ref{thmene}.   We only  provide an upper-bound for the right-hand side of (\ref{EUBR}) with the domain decomposition  $\Omega_R=\cup_{i=1}^3 \Omega^i_R$ in (\ref{Pi123}). Consider a constant $\delta_0>0$. From $|u^t(x)- u^t(y)| \le C \min\{\delta_0, |x-y|\}$ and the  boundedness of $u$,  we have  
\begin{eqnarray}\nonumber
\mathcal E (u, B_R) &\le&  CR^{n-1} + C    \iint_{(\mathbb{R}^n\setminus B_R )\times B_R  } \left[\min\{\delta_0, |x-y|\}\right] J (x-y)  dy dx   
\\&\le & 
\label{EPhiuBR}  CR^{n-1} + C  \iint_{\Omega_R}  \left[\min\{\delta_0, |x-y|\}\right] J (x-y)  dy dx  . 
\end{eqnarray}
An upper-bound for the integral on $\Omega_R^3$ is given by (\ref{IntIER}).  We now compute the integral on $\Omega^1_R$ and will provide an upper-bound for the integral  
\begin{eqnarray*}
     \delta_0  \iint_{\Omega^1_R }|x-y|^{-n-\alpha}   dy dx
&=&    \delta_0   \int_{B_{R}   } \int_{ \mathbb{R}^n\setminus B_{R+\delta_0}(x) }   |z|^{-n-\alpha}   dz dx
\\&\le& \delta_0   \int_{B_{R}   } \int_{R+\delta_0-|x|}^{\infty}   r^{-1-\alpha}   dr dx
\\&\le&  \frac{\delta_0 }{\alpha}  \int_{B_{R}   }  (R +\delta_0- |x|)^{-\alpha} dx   
\\&\le&   \frac{ \delta_0  }{\alpha} R^{n-1} \int_{0}^{R}     (R +\delta_0- r)^{-\alpha} dr   . 
\end{eqnarray*}
Straightforward computations show that the latter integral is bounded by the following term, 
\begin{equation}\label{IntJ2R11}
\delta_0   \int_{B_{R}   } \int_{ \mathbb{R}^n\setminus B_{R+\delta_0} }   |x-y|^{-n-\alpha}   dy dx
\le  C   \left\{ \begin{array}{lcl}
\hfill  \delta_0  \log \left(\frac{R+\delta_0}{\delta_0} \right)  R^{n-1}  \ \ &\text{for}& \ \ \ \alpha=1,\\   
\hfill \frac{\delta_0}{\alpha(1-\alpha)} [(R+\delta_0)^{1-\alpha}- \delta^{1-\alpha}_0] R^{n-1}  \ \ &\text{for}&\ \ \alpha\neq 1 . 
\end{array}\right.
\end{equation}
Similar computations hold for subdomain $\Omega^2_R$. From (\ref{IntIER}), (\ref{EPhiuBR}) and (\ref{IntJ2R11})   we get the desired result.  

 \hfill $\Box$

\section{Pointwise Estimates and Monotonicity Formulas; Proofs of Theorem \ref{hamilton}-\ref{iden}}\label{secham}

In this section, we provide proofs for Theorem \ref{hamilton}-\ref{iden}.  In addition, we provide a monotonicity formula at the end for nonradial solutions.  The proofs of Theorem \ref{hamilton} and Theorem \ref{modica}  motivated by the ideas and methods provided in \cite{cabSire1}. 
\\
\\
\noindent\textbf{Proof of Theorem \ref{hamilton}:}  Suppose that $v$ is a solution of the extension problem. For any $x\in\mathbb R$, define 
\begin{equation}
w(x):=\frac{1}{2}  \int_0^\infty y^{a} \left[  (\partial_x v)^2 - (\partial_y v)^2 \right] dy.
\end{equation}
Differentiating with respect to $x$,  we get  
\begin{equation}\label{xw}
\partial_x w(x) =  \int_0^\infty y^{a} \left[  \partial_x v \partial_{xx} v - \partial_y v \partial_{xy} v \right] dy.
\end{equation}
From the first equation of (\ref{eg}),  we have 
$$ y^a \partial_{xx} v  + \partial_y \left(   y^a \partial_y v   \right)=0. $$
This and (\ref{xw}) yields 
\begin{equation}\label{xwi}
\partial_x w(x) =  \int_0^\infty \left[  -\partial_x v \partial_y \left(   y^a \partial_y v \right) -  y^a \partial_y v \partial_{xy} v \right] dy.
\end{equation}
From integration by parts,  we obtain the following
\begin{equation}
-\int_0^\infty \partial_x v  \partial_y \left(   y^a \partial_y v  \right) dy = \int_0^\infty y^a  \partial_{xy} v   \partial_y v   dy  +  \lim_{y\to 0} y^a    \partial_{x} v  \partial_{y} v . 
\end{equation}
From this and (\ref{xwi}),  we have 
 \begin{equation}
\partial_x w(x) =   \lim_{y\to 0} y^a    \partial_{x} v \partial_{y} v.
\end{equation}
From the boundary term in (\ref{main}) we get 
$$\partial_x w(x) = -d_\alpha \partial_{x} v [f(v(x,0))+\Delta v(x,0)] =d_\alpha \partial_{x} \left[ F(v(x,0)) -\frac{1}{2} (\partial_x v(x,0))^2\right] .
$$
Hence, 
$$\partial_x\left[    w(x)  - d_\alpha  F(v(x,0)) +  \frac{d_\alpha}{2} (\partial_x v(x,0))^2 \right]=0.$$
From  (\ref{asympve}),  we conclude for all $x\in\mathbb R$,  
\begin{equation}
  d_\alpha  F(v(x,0)) -d_\alpha  F(\tau)  =  \frac{d_\alpha}{2} (\partial_x v(x,0))^2 +w(x) . 
\end{equation}

               \hfill $ \Box$

\noindent\textbf{Sketch of Proof of Theorem \ref{modica}:} Define 
\begin{equation}
w_1(x,y):=\frac{1}{2}  \int_0^y t^{a} \left[  (\partial_x v(x,t))^2 - (\partial_y v(x,t))^2 \right] dt.
\end{equation}
Now define $ w_2(x,y):=F(v(x,0)) - F(\tau) - w_1(x,y)$. Then, it is straightforward to notice that 
\begin{equation}
\partial_y w_2(x,y) = - \frac{y^a}{2}  \left[  (\partial_x v(x,y))^2 - (\partial_y v(x,y))^2 \right], 
\end{equation}
and 
\begin{equation}
\partial_x w_2(x,y) = y^a \partial_x v(x,y) \partial_y v(x,y) +\partial_x v(x,0) \partial_{xx} v(x,0). 
\end{equation}
Now, define $w(x,y):= w_2(x,y) - \frac{1}{2} (\partial_x v(x,0))^2 $. We need to show that $w>0$  in $\mathbb R^2_+$. The function $w$ is bounded and its derivative satisfy $\partial_y w = \partial_y w_2$ and 
\begin{equation}
\partial_x w(x,y) = y^a \partial_x v(x,y) \partial_y v(x,y) . 
\end{equation}
From this and direct computations, for all $y>0$, we conclude  that 
\begin{equation}
\text{div} (y^a \nabla w)= -a y^{2a-1} ( \partial_x v )^2, 
\end{equation}
and 
\begin{equation}
\text{div} (y^{-a} \nabla w)= -a y^{-1} ( \partial_y v )^2. 
\end{equation}
Test of the proof is by contradiction assuming that $w$ does not attain its infimum in $(x,y)\in\mathbb R\times [0,\infty)$ and we omit it here. 

               \hfill $ \Box$

\noindent\textbf{Proof  of Theorem \ref{iden}:} 
Assume that $ v= v(r,y)$ for $r=|x|$ for $r\in\mathbb R^+$ and $y\in\mathbb R^+$. So, 
 \begin{eqnarray}\label{emainrad}
 \left\{ \begin{array}{lcl}
\hfill \partial_{rr} v +\frac{n-1}{r} \partial_r v + \partial_{yy} v+\frac{a}{y} \partial_y v&=& 0   \ \ \text{in}\ \  \mathbb R^+\times \mathbb R^+,\\   
\hfill -\lim_{y\to0}y^{a} \partial_{y} v&=&  d_{\alpha} [f(v(r,0))-\Delta v(r,0)]   \ \ \text{in}\ \ \mathbb R^+\times \{y=0\}.
\end{array}\right.
  \end{eqnarray}
For any $r\in \mathbb R^+$, define the following function,  
  \begin{equation}
  w(r):= \int_0^\infty   \frac{y^{a}}{2} \left[  (\partial_r v)^2 - (\partial_y v)^2 \right] dy .
  \end{equation}
Differentiating this with respect to $r$, we get 
 \begin{equation}\label{rwr}
  \partial_r w(r) =  \int_0^\infty   y^{a} \left[  \partial_r v  \partial_{rr} v - \partial_y v \partial_{ry} v \right] dy .
  \end{equation}
From the first equation in (\ref{emainrad}),  we have 
  \begin{equation}\label{}
 \partial_{rr} v = -\frac{n-1}{r} \partial_r v - \partial_{yy} v - \frac{a}{y} \partial_y v.
   \end{equation}
 Combining this  and  (\ref{rwr}),  the term $\partial_r w(r)$ can be rewritten as 
 \begin{equation}\label{wr}
 \partial_r w(r) =   -\frac{n-1}{r}   \int_0^\infty   y^{a} (\partial_r v)^2 dy -  \int_0^\infty   y^{a}  \partial_{yy} v \partial_{r} v dy 
 - a  \int_0^\infty   y^{a-1} \partial_{r} v \partial_{y} v dy -   \int_0^\infty   y^{a}  \partial_{y} v  \partial_{ry} v .
  \end{equation}
  Performing integration by parts yields  
  \begin{eqnarray*}
  \int_0^\infty   y^{a}  \partial_{y} v  \partial_{ry} v = -  \int_0^\infty  \partial_{y}\left(y^{a}  \partial_{y} v \right)  \partial_{r} v dy - \lim_{y\to 0} y^a  \partial_{r} v \partial_{y} v, 
  \end{eqnarray*}
that implies  
  \begin{equation*}
 \lim_{y\to 0} y^a  \partial_{r} v  \partial_{y} v  = - \int_0^\infty   y^{a}  \partial_{yy} v \partial_{r} v dy 
 - a  \int_0^\infty   y^{a-1} \partial_{r} v \partial_{y} v dy -  \int_0^\infty   y^{a}  \partial_{y} v  \partial_{ry} v .
   \end{equation*}
From this and (\ref{wr}),  for $r>0$, we get 
 \begin{equation}\label{partialrw} 
 \partial_{r} w(r) =  -\frac{n-1}{r} \int_0^\infty   y^{a} (\partial_r v)^2 dy + \lim_{y\to 0} y^a    \partial_{r} v \partial_{y} v .
 \end{equation}
From the second equation in (\ref{emainrad}), in we have  
  \begin{eqnarray*}
- \lim_{y\to 0} y^a   \partial_{y} v   \partial_{r} v &=&
 d_\alpha \partial_{r } \left[ f(v(r,0))+\Delta v(r,0)\right] \partial_{r} v  
\\ &=& d_\alpha  \partial_{r} \left( - F(v(r,0)) + \frac{1}{2} (\partial_r v(r,0))^2 \right)+\frac{n-1}{r} (\partial_r v(r,0))^2 . 
  \end{eqnarray*}
 From this and (\ref{partialrw}), we conclude 
$$ \partial_{r} \left( w(r) -  d_\alpha F(v(r,0)) + \frac{ d_\alpha}{2} (v_r(r,0))^2 \right)=   -\frac{n-1}{r}  \int_0^\infty   y^{a} (\partial_r v)^2 dy   -\frac{n-1}{r} (\partial_r v(r,0))^2  \le 0.$$ 
  This completes the proof. 
  

               \hfill $ \Box$

\begin{note} We fix the following notations throughout the paper; $B_R^+=\{X=( x,y)\in\mathbb R_+^{n+1}, | X|<R\}$ and $\partial^+ B_R^+=\partial B^+_R\cap \{y>0\}$.
\end{note}

\begin{prop} Let $ v$ be the extension function of solution $u$ of
\begin{equation}\label{ufu}
-\Delta u + c  (-\Delta)^{\frac{\alpha}{2}} u+ f(u)=0  \quad \text{in}\ \  \mathbb {R}^n. 
\end{equation}
For $R>1$ and $F'=f$, define  
\begin{equation}
I_{\gamma,\alpha}(R):= \frac{1}{R^{n-\gamma}} \left[  c\int_{B_R^+}  y^{a } |\nabla v |^2 d x dy + \int_{B_R}   |\nabla_x v(x,0) |^2 d x +2 \int_{B_R} F( v(x,0)) d x \right]. 
\end{equation}
Then, 
\begin{eqnarray*}
R^{n-\gamma+1} I'_{\gamma,\alpha}(R) &=&c \left(\frac{\gamma-\alpha}{2}\right) \int_{B_R^+}  y^{a } |\nabla v |^2 d x dy  + cR \int_{\partial^+{B_R^+}} y^{a} (\partial_{ \nu} v )^2 \\&&+ \left(\frac{\gamma-2}{2}\right) \int_{B_R}  |\nabla_x v(x,0) |^2 d x +  R \int_{\partial B_R} |\partial_r v(x,0)|^2  d \mathcal{H}^{n} +\gamma \int_{\partial B_R} F(v(x,0)) dx  . 
\end{eqnarray*}
\end{prop}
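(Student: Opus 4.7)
The plan is to combine direct differentiation of $I_{\gamma,\alpha}$ with two Pohozaev-type identities---one for the Caffarelli--Silvestre extension on the half-ball $B_R^+$, and one for the trace equation on $B_R$. Writing $I_{\gamma,\alpha}(R)=R^{\gamma-n}[c\mathcal A(R)+\mathcal B(R)+2\mathcal C(R)]$ with $\mathcal A(R):=\int_{B_R^+}y^a|\nabla v|^2$, $\mathcal B(R):=\int_{B_R}|\nabla_x v(x,0)|^2$, and $\mathcal C(R):=\int_{B_R}F(v(x,0))$, the Leibniz rule together with the coarea formula produces
\[
R^{n-\gamma+1}I'_{\gamma,\alpha}(R) = (\gamma-n)\bigl[c\mathcal A+\mathcal B+2\mathcal C\bigr] + R\bigl[cA_1+B_1+2C_1\bigr],
\]
where $A_1,B_1,C_1$ are the boundary integrals of $y^a|\nabla v|^2$, $|\nabla_x v|^2$, and $F(v)$ over $\partial^+B_R^+$ and $\partial B_R$ respectively. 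The target formula carries the radial boundary traces $\int y^a(\partial_\nu v)^2$ and $\int(\partial_r v)^2$ rather than the full $|\nabla v|^2$ traces, and replaces the bulk coefficient $(\gamma-n)$ by $\tfrac{\gamma-\alpha}{2}$, $\tfrac{\gamma-2}{2}$, and $\gamma$ respectively; both discrepancies are exactly what Pohozaev identities are designed to supply.

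For the extension, I would test $\div(y^a\nabla v)=0$ against the radial derivative $X\cdot\nabla v$ with $X=(x,y)\in\mathbb R^{n+1}$ and integrate by parts over $B_R^+$. Using $\nabla v\cdot\nabla(X\cdot\nabla v)=|\nabla v|^2+\tfrac{1}{2}X\cdot\nabla|\nabla v|^2$ and the weight computation $\div(y^aX)=(n+2-\alpha)y^a$, and separating $\partial^+B_R^+$ (where $X\cdot\nu=R$) from the flat part $\{y=0\}$ (where $X\cdot\nu$ vanishes but the Caffarelli--Silvestre relation $-\lim_{y\to 0}y^a\partial_y v=d_\alpha(-\Delta)^{\alpha/2}u$ produces a nontrivial trace), one obtains an extension Pohozaev identity of the form
\[
\tfrac{n-\alpha}{2}\mathcal A = \tfrac{R}{2}\int_{\partial^+B_R^+}y^a|\nabla v|^2 - R\int_{\partial^+B_R^+}y^a(\partial_\nu v)^2 - d_\alpha\int_{B_R}(x\cdot\nabla u)(-\Delta)^{\alpha/2}u\,dx.
\]
In parallel, multiplying the trace equation $\Delta u=c(-\Delta)^{\alpha/2}u+f(u)$ by $x\cdot\nabla_x u$ on $B_R$ and integrating by parts (using $\int f(u)(x\cdot\nabla u)=R\int_{\partial B_R}F(u)-n\mathcal C$) yields a local Pohozaev identity whose mixing term $c\int(x\cdot\nabla u)(-\Delta)^{\alpha/2}u$ cancels exactly against the flat-boundary contribution in the extension identity.

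Substituting both Pohozaev identities back into the direct-differentiation expression reshapes the bulk coefficients as $(\gamma-n)+\tfrac{n-\alpha}{2}=\tfrac{\gamma-\alpha}{2}$ on the term $\mathcal A$ and $(\gamma-n)+\tfrac{n-2}{2}=\tfrac{\gamma-2}{2}$ on the term $\mathcal B$; the tangential pieces $\int y^a|\nabla_T v|^2$ and $\int|\nabla_T u|^2$ of the full boundary traces are absorbed into the bulk, leaving only the radial pieces $\int y^a(\partial_\nu v)^2$ and $\int(\partial_r u)^2$, and the $F$ column collects to coefficient $\gamma$, yielding the stated identity.

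The main obstacle will be the bookkeeping of the flat-interface contribution at $\{y=0\}$: the Caffarelli--Silvestre constant $d_\alpha$ and the coupling constant $c$ must pair with the mixing integral $\int_{B_R}(x\cdot\nabla u)(-\Delta)^{\alpha/2}u\,dx$ in the two identities so that this integral cancels exactly, and the factors of $2$ generated by the Pohozaev multipliers must deliver the precise coefficients $\tfrac{\gamma-\alpha}{2}$ and $\tfrac{\gamma-2}{2}$ in place of $\gamma-\alpha$ and $\gamma-2$. Once this cancellation is verified and the radial/tangential decomposition of the boundary traces is carried out, the remaining algebra is routine.
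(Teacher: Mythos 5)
Your proposal follows essentially the same route as the paper: direct differentiation of $I_{\gamma,\alpha}$ via the coarea formula, a Rellich--Pohozaev identity for $\div(y^a\nabla v)=0$ on $B_R^+$ obtained by testing against $z\cdot\nabla v$ (the paper writes it as the pointwise divergence identity $\div\left(y^a (z\cdot\nabla v)\nabla v-\tfrac12 y^a|\nabla v|^2 z\right)+\tfrac{n-\alpha}{2}y^a|\nabla v|^2=0$), and a local Pohozaev identity for the trace equation that absorbs the flat-boundary contribution $\int_{B_R}g(x)\,x\cdot\nabla_x v(x,0)\,dx$ with $g=\Delta_x v(x,0)-f(v(x,0))$ --- exactly your ``mixing-term cancellation.'' One caveat on bookkeeping: the reshaping $(\gamma-n)+\tfrac{n-\alpha}{2}=\tfrac{\gamma-\alpha}{2}$ you assert holds only when $\gamma=n$, and carrying out your (correct) Leibniz differentiation literally produces twice the stated right-hand side; this factor-of-$2$ normalization issue is present in the paper's own computation as well (its differentiation step is off by $1/2$ relative to the stated definition of $I_{\gamma,\alpha}$), so it reflects an inconsistency in the statement rather than a flaw in your approach.
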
  

\begin{proof} Let $c=1$. It is straightforward to show that for $ z\in\mathbb R^{n+1}_+$, 
\begin{equation}\label{div}
\div\left( y^{a}   z\cdot \nabla v \nabla v   -   \frac{1}{2}  y^{a}   |\nabla v|^2  z \right) + 
\frac{n-\alpha}{2} y^{a} |\nabla v|^2 =0.
\end{equation}
Integrate over $B_R^+$, and from the fact that $\partial^+ B_R^+$ we have $ z=R \nu$,  we conclude 
 \begin{eqnarray}\label{eq1}
\int_{B_R^+} \div\left( y^{a} \nabla v z\cdot \nabla v \right) &=& \lim_{y\to 0} \int_{B_R} y^{a} (-\partial_y v)  x\cdot \nabla_x v + \int_{\partial^+{B_R^+}} y^{a} (\partial_\nu v)^2   , 
\\ \label{eq2}
\int_{B_R^+} \div\left( y^{a}  z  |\nabla v|^2 \right) &=&  R \int_{\partial^+{B_R^+}} y^{a}     |\nabla v|^2 . 
\end{eqnarray}
From above, for (\ref{eg}), we have 
 \begin{equation}\label{id1}
 R \int_{\partial^+{B_R^+}} y^{a} (\partial_{ \nu} v )^2  + \int_{B_R} g(x)  x\cdot \nabla_{ x} v(x,0)  - \frac{R}{2} \int_{\partial^+{B_R^+}} y^{a}   |\nabla v|^2 +\frac{n-\alpha}{2} \int_{B_R^+}  y^{a} |\nabla v|^2 =0. 
 \end{equation}
For $g(x)=\Delta_x v(x,0)-f(v(x,0))$, we conclude the following Pohozaev identity 
\begin{eqnarray}\label{id2}
 R \int_{\partial^+{B_R^+}} y^{a} (\partial_{ \nu} v )^2  +\frac{n-2}{2} \int_{B_R} |\nabla_x v(x,0)|^2 -\frac{R}{2} \int_{\partial B_R} |\nabla_x v(x,0)|^2 + R \int_{\partial B_R} |\partial_r v(x,0)|^2 \\- R \int_{\partial B_R} F(v(x,0)) +n \int_{B_R} F(v(x,0))
 - \frac{R}{2} \int_{\partial^+{B_R^+}} y^{a}   |\nabla v|^2 +\frac{n-\alpha}{2} \int_{B_R^+}  y^{a} |\nabla v|^2 =0  . 
 \end{eqnarray}
Differentiating $I_{\gamma,\alpha}(R)$, we get
\begin{eqnarray*}
I_{\gamma,\alpha}'(R)&=&-\left(\frac{n-\gamma}{2}\right)R^{-n+\gamma-1} \int_{B_R^+}  y^{a } |\nabla v |^2 +  \frac{R^{-n+\gamma}}{2}\int_{\partial^+ B_R^+}  y^{a } |\nabla v |^2 \\&&-\left(\frac{n-\gamma}{2}\right)R^{-n+\gamma-1} \int_{B_R}  |\nabla_x v(x,0) |^2 + \frac{R^{-n+\gamma}}{2}  \int_{\partial B_R}  |\nabla_x v(x,0) |^2 
\\&&+ (-n+\gamma) R^{-n+\gamma-1} \int_{B_R} F(v(x,0)) + R^{-n+\gamma}  \int_{\partial B_R} F(v(x,0)) . 
\end{eqnarray*}
Combining the above two equalities, completes the proof. 

\end{proof}

As a direct consequence of the above technical computations,  we conclude that if the following inequality holds for any $\gamma,c,\alpha$
 \begin{equation}\label{ingam}
 2\gamma \int_{B_R} F(v(x,0)) dx \ge c (\alpha-\gamma)  \int_{B_R^+}  y^{a } |\nabla v |^2 d x dy + (2-\gamma) \int_{B_R}   |\nabla_x v(x,0) |^2 d x. 
 \end{equation} 
Then, $I_{\gamma,\alpha}(R)$ is a nondecreasing function of $R$ when $F \ge 0$.  

\begin{thm}
If $\gamma=2$, then $I_{\gamma,\alpha}(R)$ is  a nondecreasing function of $R$ when $F \ge 0$. 
\end{thm}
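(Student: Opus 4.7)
The plan is to read off the result directly from the Pohozaev-type identity proved in the preceding proposition, with $\gamma=2$ substituted. The preceding proposition gives an explicit expression for $R^{n-\gamma+1}I'_{\gamma,\alpha}(R)$ as a sum of five terms, so the strategy is to check the sign of each term once $\gamma$ is fixed at $2$.

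Substituting $\gamma=2$ into that formula, the coefficient $\tfrac{\gamma-2}{2}$ in front of $\int_{B_R}|\nabla_x v(x,0)|^2\,dx$ vanishes, which eliminates the only genuinely sign-ambiguous bulk term. What remains is
\begin{equation*}
R^{n-1} I'_{2,\alpha}(R) = \frac{c(2-\alpha)}{2}\int_{B_R^+} y^{a}|\nabla v|^2\,dx\,dy + cR\int_{\partial^+ B_R^+} y^{a}(\partial_\nu v)^2 + R\int_{\partial B_R}|\partial_r v(x,0)|^2\,d\mathcal{H}^{n-1} + 2\int_{\partial B_R} F(v(x,0))\,dx.
\end{equation*}
Next I would verify term by term that the right-hand side is nonnegative. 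Since $0<\alpha<2$ one has $\tfrac{c(2-\alpha)}{2}>0$, and the weight $y^{a}=y^{1-\alpha}$ is nonnegative on $\mathbb{R}^{n+1}_+$, so the first two terms are manifestly $\ge 0$ (this is exactly where the hypothesis $\alpha<2$, built into the extension framework, enters). The third term is a boundary integral of a squared quantity and is nonnegative, while the fourth is nonnegative by the standing assumption $F\ge 0$. Dividing by $R^{n-1}>0$ then gives $I'_{2,\alpha}(R)\ge 0$, which is the desired monotonicity.

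I do not expect a serious obstacle here, since all the work has been done in establishing the derivative identity. The only points that require care are (i) confirming that the sign $\alpha<2$ is what makes the bulk coefficient $\tfrac{\gamma-\alpha}{2}$ positive at $\gamma=2$, and (ii) noting that the choice $\gamma=2$ is precisely the one that kills the $|\nabla_x v(x,0)|^2$ coefficient, which is the only term in the general formula whose sign is not guaranteed by the setup. In the language of the sufficient condition \eqref{ingam}, the case $\gamma=2$ reduces the required inequality to $4\int_{B_R} F(v(x,0))\,dx \ge c(\alpha-2)\int_{B_R^+} y^{a}|\nabla v|^2$, which is automatic since the left side is nonnegative and the right side is nonpositive.
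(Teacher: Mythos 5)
Your proof is correct and follows exactly the route the paper intends: substitute $\gamma=2$ into the derivative identity from the preceding proposition (equivalently, check the sufficient condition \eqref{ingam}), note that the coefficient of the sign-ambiguous term $\int_{B_R}|\nabla_x v(x,0)|^2$ vanishes, and observe that every remaining term is nonnegative since $\alpha<2$ and $F\ge 0$. No gaps.
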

As a direct consequence of the above monotonicity formula, one can conclude that $v(x,y)$ is constant for $n\ge 2$ and 
$$
 c\int_{\mathbb R^{n+1}_+}  y^{a } |\nabla v |^2 d x dy + \int_{\mathbb R^{n}}   |\nabla_x v(x,0) |^2 d x +2 \int_{\mathbb R^{n}} F( v(x,0)) d x <\infty .
$$
Note that for the case of local equations that is $c=0$,  Modica's estimate in \cite{mod} implies that the above inequality holds for $\gamma=1$. This implies that for this case the monotonicity formula holds for $\gamma=1$.  This raises the natural question that  if (\ref{ingam}) holds when  $c>0$,  $0<\alpha<2$ and $\gamma<2$. This remains as an open problem.   

\section{Summation of Nonlocal Operators}\label{secsum}
 In this section, we consider the sum of singular jump kernels of the form  $J=J_1+J_2$ where both $J_1$ and $J_2$ are nonnegative measurable symmetric  even jump kernels.  The nonlocal equation, without the diffusion component, associated with this kernel is 
 \begin{equation} \label{Tmain}
  \lim_{\epsilon\to 0} \int_{\{y\in \mathbb R^n, |x-y|>\epsilon\} } [u(y) - u(x)] \left(J_1 (x,y) +J_2(x,y)\right)dy +  f(u)=0  \quad  \text{in} \ \  \RR^n.  
  \end{equation}   
 Inspired by the fractional Laplacian operator, naturally, we consider 
\begin{equation}\label{Jc1c2}
 J (x,z) =  \frac{c_1(x-z)}{|x-z|^{n+\alpha_1}} + \frac{c_2(x-z)}{|x-z|^{n+\alpha_2}}, 
 \end{equation} 
where $0<\alpha_1,\alpha_2<2$ and $c_{1}$ and $ c_{2}$ are bounded between two positive constants $0<\lambda \le \Lambda$.  When $c_1$ and $c_2$ are constant, then the above operator is the sum of two fractional Laplacian operator $\Delta^{\frac{\alpha_1}{2}} + \Delta^{\frac{\alpha_2}{2}} $. The sum of fractional powers of Laplacian operators have been studied in the literature. The authors in \cite{svon,fot,cks} and references therein studied such operators and established Harnack inequalities and heat kernel estimates. In addition, Silvestre in \cite{si} studied  H\"{o}lder estimates and regularity properties, and  Cabr\'{e} and Serra in \cite{cs} provided  symmetry results, among other interesting results, via proving and applying the extension problem for such operators.   The associated energy functional for solutions of (\ref{Tmain}) is given by $\mathcal E(u,\Omega)$, in (\ref{energy}) for  $\Omega\subset \mathbb R^n$, when the functional   $\mathcal E_J^{\text{Sob}}$ is  
\begin{equation}\label{Esob1}
\mathcal E_J^{\text{Sob}}(u,\Omega):=  \frac{1}{2} \iint_{\mathbb R^{n}\times \mathbb R^{n} \setminus  \mathcal C\Omega \times   \mathcal C\Omega}  | u(x) -u(y) |^2\left( J_1 (x-y) +J_2 (x-y) \right) dy dx, 
\end{equation}
and $\mathcal E^{\text{Pot}} $ is given by (\ref{Esob}).  For this energy functional, one can mimic the proofs in previous sections to establish the following estimate. 
\begin{thm}\label{thmeneal2}
Suppose that $ u$ is a bounded monotone solution of (\ref{Tmain}) with  $ F(\pm 1)=0$.  Assume also that the kernel $J$ satisfies  (\ref{Jc1c2}).   Then,  the following energy estimates hold for $R>1$. 
\begin{enumerate}
\item[(i)] If $0<\min\{\alpha_1, \alpha_2\}<1$, then $\mathcal E(u,B_R)  \le  C R^{n-\min\{\alpha_1,\alpha_2\}}$,
\item[(ii)] If  $\min\{\alpha_1, \alpha_2\}=1$, then $\mathcal E(u,B_R) \le  C R^{n-1}\log R$,
\item[(iii)] If $\min\{\alpha_1, \alpha_2\}>1$, then $\mathcal E(u,B_R)  \le  C R^{n-1}$,
\end{enumerate} 
where the positive constant $C$ is independent from $R$ but may depend on $\alpha_1,\alpha_2,n,\lambda,\Lambda$. 
\end{thm}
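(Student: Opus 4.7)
The plan is to mimic the shift/sliding argument used in the proofs of Theorem \ref{thmene} and Theorem \ref{thmeneal}, adapted to the purely nonlocal energy functional (\ref{Esob1}) and exploiting the linearity of the kernel decomposition $J=J_1+J_2$. Set $u^t(x',x_n):=u(x',x_n+t)$, which is still a monotone solution of (\ref{Tmain}) by translation invariance, and consider $\mathcal E(u^t,B_R)$. Since there is no Laplacian in (\ref{Tmain}), the differentiation $\partial_t\mathcal E(u^t,B_R)$ produces no boundary term from integration by parts; instead, mimicking the calculation preceding (\ref{tutT}), one obtains
\begin{equation*}
\partial_t\mathcal E(u^t,B_R)=\iint_{(\mathbb R^n\setminus B_R)\times B_R}[u^t(x)-u^t(y)]\,\partial_t u^t(x)\,(J_1+J_2)(x-y)\,dy\,dx.
\end{equation*}

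Using $F(\pm 1)=0$, $\mathcal E(1,B_R)=0$, and the identity $\mathcal E(u,B_R)=\mathcal E(1,B_R)-\int_0^\infty\partial_t\mathcal E(u^t,B_R)\,dt$, together with $\int_0^\infty\partial_t u^t(x)\,dt\le 2$ and the $L^\infty$-Lipschitz bounds $|u^t(x)-u^t(y)|\le C\min\{1,|x-y|\}$, I get
\begin{equation*}
\mathcal E(u,B_R)\le C\iint_{(\mathbb R^n\setminus B_R)\times B_R}\min\{1,|x-y|\}\,(J_1(x-y)+J_2(x-y))\,dy\,dx.
\end{equation*}
By linearity the right-hand side splits as a sum of two integrals, one against $J_i$ for each $i=1,2$. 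Each of these is exactly the object estimated in the proof of Theorem \ref{thmeneal}: employing the decomposition $(\mathbb R^n\setminus B_R)\times B_R=\Omega^1_R\cup\Omega^2_R\cup\Omega^3_R$ from (\ref{Pi123}) with a fixed $\delta_0>0$, the near-$\partial B_R$ part $\Omega^3_R$ contributes $C R^{n-1}$ as in (\ref{IntIER}), while the far interaction $\Omega^1_R\cup\Omega^2_R$ contributes (by the computation behind (\ref{IntJ2R11})) a bound of $C R^{n-\alpha_i}$ if $\alpha_i<1$, $C R^{n-1}\log R$ if $\alpha_i=1$, and $C R^{n-1}$ if $\alpha_i>1$.

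Summing the two contributions from $J_1$ and $J_2$, the dominant term is dictated by the smaller exponent $\alpha^\ast:=\min\{\alpha_1,\alpha_2\}$, yielding the three cases claimed in (i)-(iii). The main obstacle is essentially bookkeeping: one must verify that the absence of the local Brownian component does not cost the clean $R^{n-1}$ factor that previously came from the boundary term $\int_{\partial B_R}\partial_\nu u^t\,\partial_t u^t\,d\mathcal H^{n-1}$ in the local-nonlocal case. This is harmless here precisely because the exterior-interior nonlocal interaction against $|x-y|^{-n-\alpha_i}$ already produces a bound of order $R^{n-1}$ whenever $\alpha_i>1$, and the logarithmic/polynomial deterioration for $\alpha_i\le 1$ is exactly the deterioration recorded in the statement; no additional regularity beyond the a priori bounds on $u$ and $\nabla u$ (inherited from the nonlocal elliptic regularity theory for kernels of type (\ref{Jc1c2})) is required.
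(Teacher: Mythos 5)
Your argument is correct and is precisely the proof the paper intends: the paper omits the proof of Theorem \ref{thmeneal2}, stating only that one mimics the sliding argument of Theorems \ref{thmene}--\ref{thmeneal}, and your write-up carries that out faithfully — dropping the boundary term coming from the Laplacian, splitting the exterior--interior interaction by linearity in $J=J_1+J_2$, and applying the $\Omega^1_R\cup\Omega^2_R\cup\Omega^3_R$ decomposition of (\ref{Pi123}) with the estimates (\ref{IntIER}) and (\ref{IntJ2R11}) to each $J_i$, so that the smaller exponent $\min\{\alpha_1,\alpha_2\}$ dictates the three cases.
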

Applying Pohozaev-type arguments, one can see that the following monotonicity formula holds for the extension function. 
\begin{prop}\label{propmon}
Let $ v$ be the extension function of solution of (\ref{Tmain}) when the kernel $J$ is given by (\ref{Jc1c2}) when $c_1$ and $c_2$ are constant.  For $R>1$ and $a_i=1-2\alpha_i$,  define  
\begin{equation}
I_{\gamma,\alpha_1,\alpha_2}(R):= \frac{1}{R^{n-\gamma}} \left[  \int_{B_R^+} ( y^{a_1 }+  y^{a_2})  |\nabla v |^2 d x dy +2 \int_{B_R} F( v(x,0)) d x \right]. 
\end{equation}
Then, 
\begin{eqnarray*}
R^{n-\gamma+1} I'_{\gamma,\alpha_1,\alpha_2}(R) &=& \left(\frac{\gamma-\alpha_1}{2}\right) \int_{B_R^+}  y^{a_1 } |\nabla v |^2 d x dy  + R \int_{\partial^+{B_R^+}} y^{a_1} (\partial_{ \nu} v )^2 \\&&+ \left(\frac{\gamma-\alpha_2}{2}\right) \int_{B_R^+}  y^{a_2 } |\nabla v |^2 d x dy  + R \int_{\partial^+{B_R^+}} y^{a_2} (\partial_{ \nu} v )^2+\gamma \int_{\partial B_R} F(v(x,0)) dx .
\end{eqnarray*}
\end{prop}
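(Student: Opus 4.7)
The plan is to carry out a Pohozaev-type computation on each of the two Caffarelli--Silvestre extensions separately, sum the resulting identities, and compare with the direct derivative of $I_{\gamma,\alpha_1,\alpha_2}(R)$. For $i \in \{1,2\}$, I would let $v_i$ denote the extension of $u$ of order $\alpha_i$, satisfying $\div(y^{a_i}\nabla v_i)=0$ in $\mathbb R^{n+1}_+$, $v_i(\cdot,0)=u$, and $-\lim_{y\to 0} y^{a_i}\partial_y v_i = d_{\alpha_i}(-\Delta)^{\alpha_i/2} u$. The single symbol $v$ appearing in the statement should be interpreted as $v_i$ inside each $y^{a_i}$-weighted term.

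The first step is to record, for each $i$, the analogue of (\ref{div}),
\[
\div\Bigl(y^{a_i}(z\cdot\nabla v_i)\nabla v_i - \tfrac{1}{2}\,y^{a_i}|\nabla v_i|^2\,z\Bigr) + \tfrac{n-\alpha_i}{2}\,y^{a_i}|\nabla v_i|^2 = 0,
\]
which holds for every $z\in\mathbb R^{n+1}_+$ whenever $\div(y^{a_i}\nabla v_i)=0$. Integrating over $B_R^+$ and invoking the divergence theorem as in (\ref{eq1})--(\ref{id1}), writing $z=R\nu$ on $\partial^+ B_R^+$ and using the Neumann boundary condition on $\{y=0\}$, yields two Pohozaev-type identities of the form
\[
R\!\int_{\partial^+ B_R^+}\! y^{a_i}(\partial_\nu v_i)^2 - \tfrac{R}{2}\!\int_{\partial^+ B_R^+}\! y^{a_i}|\nabla v_i|^2 + \tfrac{n-\alpha_i}{2}\!\int_{B_R^+}\! y^{a_i}|\nabla v_i|^2 + d_{\alpha_i}\!\int_{B_R}(-\Delta)^{\alpha_i/2}u\,(x\cdot\nabla u)\,dx = 0.
\]

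The second step is to add the two identities. Since $v_1$ and $v_2$ share the trace $u$, the boundary contributions at $\{y=0\}$ collapse into a single integral against $x\cdot\nabla u$; after normalising the $d_{\alpha_i}$ consistently with the definition of $I_{\gamma,\alpha_1,\alpha_2}$ and using the equation $(-\Delta)^{\alpha_1/2}u + (-\Delta)^{\alpha_2/2}u = f(u)$, this integral becomes $\int_{B_R} f(u)(x\cdot\nabla u)\,dx$. A single integration by parts on the latter rewrites it as $-n\int_{B_R} F(u)\,dx + R\int_{\partial B_R} F(u)\,dx$, which supplies the $F$-terms in the final formula.

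The third step is to differentiate $I_{\gamma,\alpha_1,\alpha_2}(R) = R^{\gamma-n}\Phi(R)$ directly, where $\Phi(R)$ denotes the bracketed functional in the statement. The derivative produces a $(\gamma-n)R^{-1}\Phi(R)$ piece carrying the bulk integrals and a $\Phi'(R)$ piece consisting of boundary integrals over $\partial^+ B_R^+$ and $\partial B_R$ obtained from the coarea formula. Substituting the summed Pohozaev identity of step two eliminates the bulk quantities in favour of boundary quantities, and after collecting coefficients the prefactors $\tfrac{\gamma-\alpha_i}{2}$ appear in front of $\int_{B_R^+} y^{a_i}|\nabla v|^2$, the boundary terms $R\int_{\partial^+ B_R^+} y^{a_i}(\partial_\nu v)^2$ survive untouched, and the factor $\gamma$ emerges in front of the $F$-integral on $\partial B_R$, matching the stated identity. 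The main obstacle I anticipate is the bookkeeping in step two: reconciling the normalising constants $d_{\alpha_i}$ with the scaling conventions for $(-\Delta)^{\alpha_i/2}$ and the kernel coefficients $c_1,c_2$ in (\ref{Jc1c2}), so that summing the two bottom-boundary contributions produces exactly $\int_{B_R} f(u)(x\cdot\nabla u)\,dx$ without a spurious multiplicative constant. Once this alignment is settled, the remainder of the derivation parallels the proof of the preceding proposition verbatim.
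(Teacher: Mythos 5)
Your proposal is correct and follows exactly the Pohozaev-type argument that the paper invokes for this proposition without writing it out, namely the computation carried out in detail for the preceding single-operator proposition (identities (\ref{div})--(\ref{id2}) followed by direct differentiation of $I$). Your reading of $v$ as a pair of extensions $v_1,v_2$ with common trace $u$, the two bottom-boundary fluxes combining through the equation into $\int_{B_R} f(u)\,(x\cdot\nabla u)\,dx$ after normalizing the constants $d_{\alpha_i}$, is precisely the intended adaptation to the two-kernel case.
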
  

\begin{cor}
If $\gamma=\max\{\alpha_1,\alpha_2\}$, then $I_{\gamma,\alpha_1,\alpha_2}(R)$ is  a nondecreasing function of $R$ when $F \ge 0$. 
\end{cor}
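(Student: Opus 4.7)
The strategy is to read off the conclusion directly from the derivative formula supplied by Proposition \ref{propmon}. Fix $\gamma = \max\{\alpha_1,\alpha_2\}$ and consider the five terms appearing on the right-hand side of
\[
R^{n-\gamma+1} I'_{\gamma,\alpha_1,\alpha_2}(R) = \sum_{i=1}^2 \left(\tfrac{\gamma-\alpha_i}{2}\right)\!\int_{B_R^+} y^{a_i} |\nabla v|^2\,dx\,dy + R\sum_{i=1}^2 \int_{\partial^+ B_R^+} y^{a_i} (\partial_\nu v)^2 + \gamma \int_{\partial B_R} F(v(x,0))\,d\mathcal H^{n-1}.
\]
The plan is simply to check that each of these five summands is nonnegative under the stated hypotheses.

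First, with $\gamma = \max\{\alpha_1,\alpha_2\}$, the two coefficients $(\gamma-\alpha_i)/2$ are both nonnegative (one of them vanishes). Second, on the upper half-space one has $y>0$, so $y^{a_i}>0$ regardless of the sign of $a_i$, and consequently the two bulk integrals $\int_{B_R^+} y^{a_i}|\nabla v|^2$ are nonnegative. Third, the two boundary integrals over $\partial^+ B_R^+$ are of the form $\int y^{a_i}(\partial_\nu v)^2$ with $y>0$ on $\partial^+ B_R^+$, so these are nonnegative as well. Finally, the hypothesis $F\ge 0$ ensures $\int_{\partial B_R} F(v(x,0))\,dx \ge 0$, and $\gamma>0$ multiplies this nonnegatively.

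Combining these observations, $R^{n-\gamma+1} I'_{\gamma,\alpha_1,\alpha_2}(R) \geq 0$, and since $R^{n-\gamma+1}>0$ for $R>1$, we conclude $I'_{\gamma,\alpha_1,\alpha_2}(R)\geq 0$, i.e.\ $I_{\gamma,\alpha_1,\alpha_2}$ is nondecreasing in $R$. There is essentially no obstacle here beyond confirming the signs; the entire content of the monotonicity is absorbed into Proposition \ref{propmon}, whose Pohozaev-type derivation is the nontrivial step. The only delicate point worth flagging is that the choice $\gamma=\max\{\alpha_1,\alpha_2\}$ is precisely the smallest value for which both bulk coefficients remain nonnegative simultaneously, which explains why this is the natural scaling for the sum of two fractional operators (and recovers the single-operator case $\gamma=\alpha$ when $\alpha_1=\alpha_2$).
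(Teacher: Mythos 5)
Your proof is correct and is exactly the argument the paper intends: the corollary is an immediate term-by-term sign check of the derivative formula in Proposition \ref{propmon}, with $\gamma=\max\{\alpha_1,\alpha_2\}$ making both bulk coefficients nonnegative, the weights $y^{a_i}$ and squares supplying the rest, and $F\ge 0$ handling the last term. No discrepancy with the paper's (unwritten, because immediate) proof.
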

Since the Poincar\'{e} type inequality in Theorem \ref{ThmPoin} holds for a general kernel, we can establish De Giorgi type results in two dimensions when $J=J_1+J_2$ where for $i=1,2$, 
\begin{equation}\label{JJ1}
\frac{\lambda}{|x-y|^{n+\beta_i}} \mathds{1}_{\{|x-y|\le \delta_1\}} \le  J_i  (x-y) \le  \frac{\Lambda}{|x-y|^{n+ \alpha_i}} \mathds{1}_{\{|x-y|\le \delta_0\}}. 
 \end{equation} 
\begin{thm}\label{Thmain3} Let $u$  be a bounded stable solution of  (\ref{Tmain}) in two dimensions when the jump kernel $J=J_1+J_2$ is truncated and satisfies  (\ref{JJ1}).  Then,  $u$ must be a one-dimensional function.   
  \end{thm}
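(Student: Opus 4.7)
The plan is to adapt the proof of Theorem \ref{Thmain} by deriving a purely nonlocal analogue of the Poincar\'e inequality in Theorem \ref{ThmPoin}. Since (\ref{Tmain}) has no Laplacian component, the stability inequality for a stable solution $u$ reads
\begin{equation*}
\int_{\mathbb R^2} f'(u)\zeta^2\,dx \le \frac{1}{2}\iint_{\mathbb R^4} [\zeta(x)-\zeta(y)]^2\bigl(J_1(x-y)+J_2(x-y)\bigr)\,dx\,dy,
\end{equation*}
for every $\zeta\in C_c^1(\mathbb R^2)$. Substituting $\zeta(x)=|\nabla u(x)|\eta(x)$ and repeating the computation of Theorem \ref{ThmPoin} with the Laplacian-induced terms (and the corresponding Sternberg--Zumbrun geometric identity) simply absent, I obtain
\begin{equation*}
\iint_{\mathbb R^4}\mathcal A_y(\nabla_x u)[\eta^2(x)+\eta^2(x+y)]J(y)\,dx\,dy \le \iint_{\mathbb R^4}\mathcal B_y(\nabla_x u)[\eta(x)-\eta(x+y)]^2 J(y)\,dx\,dy,
\end{equation*}
where $J=J_1+J_2$ and $\mathcal A_y,\mathcal B_y$ are defined in (\ref{mathcalA})--(\ref{mathcalB}).

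Next, I would test this inequality against the logarithmic cutoff $\eta$ defined in (\ref{testeta}). Since $u$ is bounded and standard regularity for truncated-kernel nonlocal operators yields $|\nabla u|\in L^\infty(\mathbb R^2)$, the weight $\mathcal B_y(\nabla_x u)$ is bounded, so the right-hand side is controlled by
\begin{equation*}
C \sum_{i=1}^{2}\iint [\eta(x)-\eta(y)]^2 J_i(x-y)\,dx\,dy.
\end{equation*}
For each $i\in\{1,2\}$, the kernel $J_i$ is truncated at scale $\delta_0$ and dominated by $\Lambda|x-y|^{-2-\alpha_i}$, so the symmetric decomposition $\Gamma_{R,\sqrt R}=\cup_{k=1}^{6}\Gamma^k_R$ applies exactly as in the proof of Theorem \ref{Thmain}: the contributions from $\Gamma^5_R$ and $\Gamma^6_R$ vanish since $\eta$ is constant there, the contribution from $\Gamma^4_R$ vanishes for large $R$ because $|x-y|>R-\sqrt R>\delta_0$, and the case-by-case pointwise bounds for $|\eta(x)-\eta(y)|^2$ used to estimate $I_1(R),I_2(R),I_3(R)$ apply verbatim with $\alpha$ replaced by $\alpha_i$. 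Each $J_i$-contribution is therefore of order $C_i(\delta_0,\alpha_i)/\log R$, and summing over $i=1,2$ yields an $O(1/\log R)$ bound on the right-hand side.

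Passing to the limit $R\to\infty$, the left-hand side forces $\mathcal A_y(\nabla_x u) J(y)=0$ for almost every $(x,y)\in\mathbb R^2\times\mathbb R^2$. The lower bound in (\ref{JJ1}) guarantees $J(y)\ge \lambda |y|^{-2-\beta_1}\mathds{1}_{\{|y|\le\delta_1\}}$, so $\mathcal A_y(\nabla_x u)=0$ a.e.\ on $\mathbb R^2\times B_{\delta_1}$. Since $\mathcal A_y(\nabla_x u)=|\nabla_x u(x)|\,|\nabla_x u(x+y)|-\nabla_x u(x)\cdot\nabla_x u(x+y)\ge 0$ vanishes precisely when the two gradients are codirectional, this gives $u_{x_1}(x)u_{x_2}(x+y)=u_{x_1}(x+y)u_{x_2}(x)$ for all small $y$, and therefore $\nabla_x u(x)\cdot\nabla_x^\perp u(x+y)=0$. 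The direction of $\nabla u$ is thus locally, and hence globally, constant, so $u$ is one-dimensional.

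The only step that demands extra justification is the purely nonlocal Poincar\'e inequality, which is obtained as a direct specialization of the proof of Theorem \ref{ThmPoin} once the local Laplacian terms are dropped. Beyond that, the two-kernel setting is handled by linearity: the right-hand side splits into separate $J_1$ and $J_2$ integrals that are each estimated by the single-kernel procedure of Theorem \ref{Thmain}, so no genuinely new obstacle arises.
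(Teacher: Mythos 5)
Your proposal is correct and follows essentially the route the paper intends: the paper omits the proof of this theorem, remarking only that the Poincar\'e-type inequality of Theorem \ref{ThmPoin} holds for general kernels and that the argument of Theorem \ref{Thmain} carries over, which is exactly what you carry out (including the correct observation that the local terms must be dropped since (\ref{Tmain}) has no Laplacian, and that the $J_1$ and $J_2$ contributions split by linearity and are each handled by the single-kernel estimates with $\alpha_i<2$). The only caveat, shared with the paper itself, is that the boundedness of $|\nabla u|$ is invoked rather than proved.
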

The proofs of above results are eliminated due to the similarity to the ones in Section \ref{secmain}. We end this section with pointing out that our main results can be easily generalized  to the case when the jumping measure is of the form $J=\sum_{i=1}^m J_i$ for any $m\in\mathbb N$. 
\\
\\
\noindent {\it Acknowledgment}.   The  author would like to thank Professor Yannick Sire for discussions and comments on this topic.

\end{document}